\theoremstyle{plain}
\newtheorem{theorem}{Theorem}[section]
\newtheorem{proposition}[theorem]{Proposition}
\newtheorem{lemma}[theorem]{Lemma}
\newtheorem{corollary}[theorem]{Corollary}
\newtheorem{conj}[theorem]{Conjecture}
\theoremstyle{definition}
\newtheorem{definition}[theorem]{Definition}
\newtheorem{example}[theorem]{Example}
\theoremstyle{remark}
\newtheorem{remark}[theorem]{Remark}
\DeclareFontFamily{U}{mathx}{\hyphenchar\font45}
\DeclareFontShape{U}{mathx}{m}{n}{<-> mathx10}{}
\DeclareSymbolFont{mathx}{U}{mathx}{m}{n}
\DeclareMathAccent{\widebar}{0}{mathx}{"73}
\let\originalleft\left
\let\originalright\right
\renewcommand{\left}{\mathopen{}\mathclose\bgroup\originalleft}
\renewcommand{\right}{\aftergroup\egroup\originalright}
\renewcommand{\ps@plain}{%
\renewcommand{\@oddfoot}{\footsc\hfil\footrm\thepage}}
\renewcommand{\@oddfoot}{\footsc\hfil\footrm\thepage}
\title{\bf Cluster automorphisms
and the marked exchange graphs of skew-symmetrizable cluster algebras}
\author{John W. Lawson\thanks{The author's studies were supported by an EPSRC PhD scholarship.}\\
\small Department of Mathematical Sciences\\[-0.8ex]
\small Durham University\\[-0.8ex]
\small South Road\\[-0.8ex]
\small Durham, UK\\[-0.8ex]
\small DH1 3LE\\
\small\tt j.w.lawson@durham.ac.uk\\
}
\date{
\small Mathematics Subject Classifications: 13F60 (primary); 16W20 (secondary)}
\begin{document}
\maketitle
\begin{abstract}
	Cluster automorphisms have been shown to have links to the mapping class
	groups of surfaces, maximal green sequences and to exchange graph
	automorphisms for skew-symmetric cluster algebras. In this paper we
	generalise these results to the skew-symmetrizable case by introducing a
	marking on the exchange graph. Many skew-symmetrizable matrices unfold to
	skew-symmetric matrices and we consider how cluster automorphisms behave under
	this unfolding with applications to coverings of orbifolds by surfaces.

	\bigskip\noindent \textbf{Keywords:} cluster algebra; quiver mutation; cluster
	automorphism; exchange graph; mapping class group.
\end{abstract}

\section{Introduction}\label{sec:intro}

Cluster algebras were introduced by Fomin and Zelevinsky in~\cite{FZ-CA1}, and
have since found applications across many types of mathematics. These are commutative
subalgebras of $\mathbb{C}(x_1,\dotsc,x_n)$ generated by rational functions
constructed using a certain combinatorial procedure starting from an initial
seed which produces that seed's mutation class.

In the same paper Fomin and Zelevinsky defined the exchange graph of a cluster
algebra to better visualise the combinatorics of the mutation class. These
graphs proved a useful tool in their classification of finite-type cluster
algebras in~\cite{FZ-CA2} where these algebras were shown to correspond to
Dynkin diagrams.

Cluster algebras were shown to be closely related to triangulations of surfaces
by Fomin, Shapiro and Thurston in~\cite{FST-tri}, where a quiver is constructed
from a given triangulation and quiver mutations correspond to flipping an edge
in the triangulation. These quivers from surfaces play an important role in the
classification of mutation-finite quivers, given by Felikson, Shapiro and
Tumarkin in~\cite{FST-mutfin}, as all such quivers are mutation-finite and there
are only 11 other exceptional mutation classes.

In a similar fashion triangulations of orbifolds with orbifold points of order 2
were shown to correspond to mutation-finite diagrams by Felikson, Shapiro and
Tumarkin in~\cite{FST-orbifolds}. Unfoldings of diagrams, introduced by
Felikson, Shapiro and Tumarkin in~\cite{FST-unfoldings}, then correspond to
coverings of the orbifold by a  triangulated surface, as shown in~\cite[Section
12]{FST-orbifolds}.

Cluster automorphisms were introduced  by Assem, Schiffler and Shramchenko
in~\cite{ASS-auts}, for cluster algebras generated from quivers, as
automorphisms of the cluster algebra taking clusters to clusters and acting as
either the identity or the opposite function on quivers. These ideas were
extended to cluster algebras generated from certain skew-symmetrizable matrices
by Chang and Zhu in~\cite{CZ-fintype}. The group of cluster automorphisms of a
cluster algebra arising from the triangulation of a surface was shown to be
isomorphic to the mapping class group of this surface by Br\"{u}stle and Qiu
in~\cite{BQ-mcg}.

In their paper on labelled seeds and global mutations~\cite{KP-auts}, King and
Pressland showed that cluster automorphisms arise naturally when mutation
classes are considered as orbits of labelled seeds under the action of a global
mutation group $M_n$. The group of cluster automorphisms is a subgroup of the
automorphisms of these mutation classes, $\operatorname{Aut}_{M_n}$, which commute with this
group action, and in fact for mutation-finite quivers these groups are
isomorphic. We use the links between automorphisms of the exchange graph and the
labelled exchange graph to prove that this group $\operatorname{Aut}_{M_n}$ is isomorphic to
the group of exchange graph automorphisms:

{%
\renewcommand{\thetheorem}{\ref{thm:mnexch}}%
\begin{theorem}
	For a labelled mutation class $\mathcal{S}^0$ with mutation class $\mathcal{S} = 
	\mathchoice{\left.\raisebox{0.5ex}{$\mathcal{S}^0$}\middle/\raisebox{-1ex}{$\operatorname{Sym}(n)$}\right.}{\left.\mathcal{S}^0\middle/\operatorname{Sym}(n)\right.}{\mathcal{S}^0/\mathcal{S}^0}{\mathcal{S}^0/\operatorname{Sym}(n)}$ and exchange graph $\mathcal{E}(\mathcal{S})$
	\[ \operatorname{Aut}_{M_n}(\mathcal{S}^0) \cong \operatorname{Aut}\mathcal{E}(\mathcal{S}). \]%
\end{theorem}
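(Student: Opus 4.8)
The plan is to build the isomorphism directly from the quotient description $\mathcal{S} = \mathcal{S}^0/\operatorname{Sym}(n)$, by producing a natural homomorphism $\Phi\colon \operatorname{Aut}_{M_n}(\mathcal{S}^0) \to \operatorname{Aut}\mathcal{E}(\mathcal{S})$ and then checking it is injective and surjective. For the map itself: any $\phi \in \operatorname{Aut}_{M_n}(\mathcal{S}^0)$ commutes in particular with the relabelling action of $\operatorname{Sym}(n) \le M_n$, so it descends to a bijection $\bar\phi$ of the vertex set $\mathcal{S}$ of $\mathcal{E}(\mathcal{S})$. Since $\phi$ also commutes with every mutation $\mu_k$, and an edge of $\mathcal{E}(\mathcal{S})$ is exactly the existence of some $k$ with $\mu_k$ joining chosen labelled representatives of the two unlabelled seeds, $\bar\phi$ carries edges to edges and so lies in $\operatorname{Aut}\mathcal{E}(\mathcal{S})$. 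Setting $\Phi(\phi) = \bar\phi$ gives a group homomorphism because passing to the $\operatorname{Sym}(n)$-quotient is functorial; this first step is essentially formal.

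For injectivity I would exploit that $M_n$ acts transitively on $\mathcal{S}^0$. Fixing a base labelled seed $s_0$, equivariance forces $\phi(g\cdot s_0) = g\cdot\phi(s_0)$ for all $g \in M_n$, so $\phi$ is completely determined by $\phi(s_0)$. If $\bar\phi = \mathrm{id}$, then $\phi(s_0)$ lies in the same relabelling orbit as $s_0$, say $\phi(s_0) = \sigma\cdot s_0$ for some $\sigma \in \operatorname{Sym}(n)$. I would then absorb $\sigma$: computing $\phi(\tau\cdot s_0) = \tau\cdot\phi(s_0)$ in two ways and using the commutation relations between mutations and relabellings forces $\sigma\cdot s_0 = s_0$, and hence $\phi = \mathrm{id}$. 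The only subtlety here is seeds carrying a nontrivial relabelling symmetry, but these cause no difficulty, since such a $\sigma$ already fixes $s_0$ as a seed.

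Surjectivity is where I expect the real work to lie. Given $\psi \in \operatorname{Aut}\mathcal{E}(\mathcal{S})$, I want to lift it to an $M_n$-equivariant bijection $\phi$ of $\mathcal{S}^0$. The construction is forced: choose $s_0$ and a labelled representative $t_0$ of $\psi([s_0])$, set $\phi(s_0) = t_0$, and extend by $\phi(g\cdot s_0) := g\cdot t_0$ for $g \in M_n$. The crux is well-definedness, namely that $g\cdot s_0 = h\cdot s_0$ must imply $g\cdot t_0 = h\cdot t_0$; equivalently, the stabiliser of $s_0$ in $M_n$ must be carried into the stabiliser of $t_0$. This is exactly where the graph-automorphism hypothesis on $\psi$ enters: elements of $\operatorname{Stab}(s_0)$ correspond to the closed walks of the labelled exchange graph based at $s_0$, which project to closed walks in $\mathcal{E}(\mathcal{S})$, and a graph automorphism preserves closed walks. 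I therefore expect to identify a generating set of $\operatorname{Stab}(s_0)$ indexed by exchange-graph cycles and verify that $\psi$ sends each such relation to one holding at $t_0$; making this cycle/relation correspondence precise is the heart of the proof.

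Once the lift $\phi$ is shown to be well-defined it is $M_n$-equivariant by construction, it is a bijection with inverse the lift of $\psi^{-1}$, and by the first step $\Phi(\phi) = \psi$. Together with injectivity this yields $\operatorname{Aut}_{M_n}(\mathcal{S}^0) \cong \operatorname{Aut}\mathcal{E}(\mathcal{S})$. The anticipated main obstacle is the well-definedness of the lift, i.e. controlling the stabiliser of the base seed via the combinatorics of cycles in the exchange graph, with the interaction between mutations and relabellings being the recurring technical point throughout.
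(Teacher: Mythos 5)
Your descent direction (sending $\phi\in\operatorname{Aut}_{M_n}(\mathcal{S}^0)$ to the induced map on $\mathcal{S}=\mathcal{S}^0/\operatorname{Sym}(n)$) matches the paper, and your overall two-way strategy is the same, but the surjectivity step contains a genuine gap. You define the lift by $\phi(s_0\cdot g):=t_0\cdot g$ for a labelled representative $t_0$ of $\psi([s_0])$ and claim that $\Phi(\phi)=\psi$ ``by the first step.'' That does not follow: even when this $\phi$ is well defined, it descends only to the map $[s_0\cdot g]\mapsto[t_0\cdot g]$, and nothing forces this to agree with $\psi$ beyond the single vertex $[s_0]$. Concretely, in the $B_2$ example let $\psi$ be the rotation of the hexagonal exchange graph by $\pi/3$ and take $t_0=(x,y)\cdot\mu_1$, a perfectly good representative of $\psi([x,y])$. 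The stabiliser of $s_0$ in $M_2$ is generated by $(\mu_1\mu_2)^3$, which also stabilises $t_0$, so your lift is well defined and equivariant --- yet it descends to a \emph{reflection} of the hexagon, not to $\psi$. The same example shows that the representative $t_0$ cannot be chosen arbitrarily, contrary to your setup.

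The missing ingredient is exactly what the paper's Theorem~\ref{thm:pullback} supplies: one must choose the \emph{ordering} of $t_0$, and of every image seed, so that the edge of $\mathcal{E}(\mathcal{S})$ obtained by mutating $[s_0]$ in position $i$ is sent by $\psi$ to the edge obtained by mutating $[t_0]$ in position $i$, and then propagate this choice vertex by vertex across the graph. The propagation is possible because adjacent clusters differ in exactly one variable and a cluster determines its seed, so the position of the changed variable pins down the labelling of each neighbouring image seed. Your stabiliser/closed-walk argument implicitly presupposes this index-matching: a closed walk in $\mathcal{E}(\mathcal{S})$ carries no mutation indices, so the assertion that $\psi$ sends the relation $s_0\cdot\mu_{i_1}\dotsb\mu_{i_k}\sigma=s_0$ to ``the same relation at $t_0$'' is not meaningful until the labellings along the image walk have been fixed; as stated the argument is circular. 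Once the pullback $\psi^\Delta$ is constructed, both the well-definedness you worry about and the identity $\Phi(\phi)=\psi$ follow, and this (together with the compatibility with permutations, Proposition~\ref{prop:deltaperm}) is how the paper completes the proof.
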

\addtocounter{theorem}{-1}%
}

Therefore for mutation-finite quivers, such as those from triangulations of a
surface, exchange graph automorphisms are cluster automorphisms.

{%
\renewcommand{\thetheorem}{\ref{cor:mfquiv-exchaut}}%
\begin{corollary}
	For a cluster algebra $\mathcal{A}$ constructed from a mutation-finite quiver with
	exchange graph $\mathcal{E}_\mathcal{A}$
	\[\operatorname{Aut}\mathcal{E}_\mathcal{A} \cong \operatorname{Aut}\mathcal{A}.\]%
\end{corollary}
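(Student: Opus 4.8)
The plan is to obtain the result by composing two isomorphisms: the one furnished by Theorem~\ref{thm:mnexch}, and the identification of $\operatorname{Aut}_{M_n}$ with the cluster automorphism group in the mutation-finite case recalled in the introduction from King and Pressland~\cite{KP-auts}.

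First I would fix a quiver $Q$ generating $\mathcal{A}$ and let $\mathcal{S}^0$ denote its labelled mutation class, with unlabelled mutation class $\mathcal{S} = \mathcal{S}^0/\operatorname{Sym}(n)$ and exchange graph $\mathcal{E}(\mathcal{S})$. Since the exchange graph of $\mathcal{A}$ is, by definition, the graph whose vertices are the seeds of $\mathcal{S}$ with edges given by single mutations, we have $\mathcal{E}(\mathcal{S}) = \mathcal{E}_\mathcal{A}$. Applying Theorem~\ref{thm:mnexch} then yields
\[ \operatorname{Aut}_{M_n}(\mathcal{S}^0) \cong \operatorname{Aut}\mathcal{E}_\mathcal{A}. \]

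Next I would invoke the fact recalled in the introduction that the cluster automorphism group $\operatorname{Aut}\mathcal{A}$ embeds into $\operatorname{Aut}_{M_n}(\mathcal{S}^0)$ as the subgroup of automorphisms commuting with the global mutation action, and that when $Q$ is mutation-finite this embedding is an isomorphism. Composing the two isomorphisms gives $\operatorname{Aut}\mathcal{E}_\mathcal{A} \cong \operatorname{Aut}\mathcal{A}$, as required.

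The main obstacle lies not in the composition itself but in checking that the hypotheses line up cleanly: one must confirm that mutation-finiteness of $Q$ is exactly the condition under which the King--Pressland embedding becomes surjective, and that the exchange graph $\mathcal{E}(\mathcal{S})$ appearing in Theorem~\ref{thm:mnexch} genuinely coincides with the standard exchange graph $\mathcal{E}_\mathcal{A}$ of the cluster algebra rather than merely being isomorphic to it. Both points are essentially bookkeeping once the two input results are granted, so I expect the corollary to follow almost immediately after Theorem~\ref{thm:mnexch}.
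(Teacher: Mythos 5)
Your proposal is correct and matches the paper's argument exactly: the corollary is obtained by composing Theorem~\ref{thm:mnexch} with the King--Pressland isomorphism $\operatorname{Aut}_{M_n}(\mathcal{S}^0) \cong \operatorname{Aut}\mathcal{A}(\mathcal{S})$ for mutation-finite skew-symmetric matrices (Theorem~\ref{thm:mutfinquiver}). The bookkeeping points you raise are indeed immediate, and the paper treats the corollary as following directly from these two results.
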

\addtocounter{theorem}{-1}%
}

This result was proved in a different way by Chang and Zhu in~\cite{CZ-exch} who
also proved an extension of this to skew-symmetrizable matrices of type $B_n$
and $C_n$ for $n \geq 3$. However for other skew-symmetrizable matrices it is
not true that exchange graph automorphisms are cluster automorphisms. It can be
shown that the group of cluster automorphisms is isomorphic to a subgroup of
the group of exchange graph automorphisms but in general there exist graph
automorphisms which do not correspond to cluster automorphisms.

In order to generalise these results we introduce a marking on the exchange
graph in such a way that any automorphism which fixes these markings does in
fact correspond to a cluster automorphism.

{%
\renewcommand{\thetheorem}{\ref{thm:mexchaut}}%
\begin{theorem}
	Let $(\textbf{x},B)$ be a seed where $B$ is a mutation-finite skew-symmetrizable
	matrix with cluster algebra $\mathcal{A}$ and marked exchange graph
	$\widehat{\mathcal{E}}_\mathcal{A}$ then
	\[ \operatorname{Aut} \mathcal{A} = \operatorname{Aut} \widehat{\mathcal{E}}_\mathcal{A}. \]%
\end{theorem}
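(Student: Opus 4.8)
The plan is to prove the two inclusions $\operatorname{Aut}\mathcal{A} \subseteq \operatorname{Aut}\widehat{\mathcal{E}}_\mathcal{A}$ and $\operatorname{Aut}\widehat{\mathcal{E}}_\mathcal{A} \subseteq \operatorname{Aut}\mathcal{A}$ separately, viewing both groups as subgroups of the unmarked group $\operatorname{Aut}\mathcal{E}_\mathcal{A}$. Throughout I would exploit that, since $B$ is mutation-finite, the (labelled) exchange graph is connected and every cluster variable is determined from the initial seed by the exchange relations, so a correspondence of seeds that is compatible with mutation at each edge assembles into a globally defined map of $\mathcal{A}$. The unmarked automorphism group is already pinned down by Theorem~\ref{thm:mnexch}, which identifies $\operatorname{Aut}\mathcal{E}(\mathcal{S})$ with $\operatorname{Aut}_{M_n}(\mathcal{S}^0)$; the whole point of the present statement is that in the skew-symmetrizable case this group is strictly larger than $\operatorname{Aut}\mathcal{A}$, and that adding the marking trims it back down to exactly the cluster automorphisms.

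For the forward inclusion, take a cluster automorphism $f$. By definition $f$ carries each cluster to a cluster and sends the seed $(\mathbf{x},B)$ to a seed whose exchange matrix is $B$ or $-B$, compatibly with mutation, so $f$ permutes the vertices of $\mathcal{E}_\mathcal{A}$ preserving adjacency and hence lies in $\operatorname{Aut}\mathcal{E}_\mathcal{A}$. Because the marking records precisely the skew-symmetrizable data attached to each edge (the exponents occurring in the exchange relation, equivalently the relevant entries of the exchange matrix read up to the global sign), and because $f$ alters the matrix of every seed only by $B \mapsto \pm B$, the data at each edge is preserved; thus $f \in \operatorname{Aut}\widehat{\mathcal{E}}_\mathcal{A}$. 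For the reverse inclusion, let $\phi \in \operatorname{Aut}\widehat{\mathcal{E}}_\mathcal{A}$. Forgetting the marking gives $\phi \in \operatorname{Aut}\mathcal{E}_\mathcal{A}$, so by Theorem~\ref{thm:mnexch} it lifts to an element of $\operatorname{Aut}_{M_n}(\mathcal{S}^0)$, namely a permutation of labelled seeds commuting with mutation and with relabelling. This yields a candidate map $f$ on $\mathcal{A}$, defined by sending the initial cluster to the cluster at the image vertex and propagating along edges. To see $f$ is a genuine cluster automorphism I would verify that it respects every exchange relation
\[ x_k x_k' = \prod_{b_{ik}>0} x_i^{b_{ik}} + \prod_{b_{ik}<0} x_i^{-b_{ik}}, \]
which holds because $\phi$ preserves the marking: the exponents $b_{ik}$ labelling each edge are carried to those at the image edge, forcing the image seed's matrix to be $\pm B$. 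Connectedness together with the recursive determination of all cluster variables from the initial seed then upgrades this edgewise compatibility into a well-defined ring automorphism of $\mathcal{A}$ sending clusters to clusters, which is the definition of a cluster automorphism.

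The hard part is the reverse inclusion, and within it the passage from local to global consistency. Preserving the marking gives the correct exchange exponents at each individual edge, but one must check that the resulting sign ambiguity $B \mapsto \pm B$ can be resolved \emph{uniformly} over the whole graph, so that $f$ is a single automorphism rather than an incompatible patchwork, and that the web of exchange relations imposes no further obstruction to $f$ being well defined. Mutation-finiteness is exactly what makes this manageable: it guarantees a connected marked exchange graph on which the seed recursion carries one consistent choice from the initial seed out to every other seed, so that edgewise marking-preservation globalises to membership in $\operatorname{Aut}\mathcal{A}$.
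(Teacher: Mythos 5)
Your forward inclusion matches the paper's, but your reverse inclusion rests on a misreading of what the marking actually records, and this is where the real content of the theorem lives. The marking attaches to each edge a single positive integer: the symmetrizer entry $d_i$ of the vertex being mutated. It does \emph{not} record the exchange exponents $b_{ik}$, so your central claim --- ``the exponents $b_{ik}$ labelling each edge are carried to those at the image edge, forcing the image seed's matrix to be $\pm B$'' --- is unsupported. If the marking carried the full matrix data at each edge the theorem would be nearly immediate; the point is that it carries much less, and one must show that this weak datum together with the combinatorics of the graph still pins down the matrix.

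The paper closes this gap in two stages, neither of which appears in your argument. First, for the \emph{unmarked} graph, it shows (Proposition~\ref{prop:exchdiagram}) that any exchange graph automorphism sends the diagram $R$ of a seed to $R$ or $R^{\textrm{op}}$: the lengths of geodesic loops at distance $0$ from a seed (the set $N^0$) recover the unoriented edge weights via Lemmas~\ref{lem:fixfrozexch}--\ref{lem:NdefinesB}, the loops at distance $1$ (the set $N^1$) recover the orientation of each $3$-vertex subdiagram up to global reversal (Lemma~\ref{lem:3vert-N}), and connectedness of $R$ propagates one consistent choice of reversal across overlapping triples. Second, since a diagram may correspond to several inequivalent matrices (in the $B_2$ example the graph automorphism sends $B$ to $-B^{T}\neq\pm B$), the marking is invoked only at the last step: preserving the symmetrizer entries forces $D_{B'}=D_B$, and a diagram together with its symmetrizer determines the matrix, giving $B'=\pm B$ (Proposition~\ref{prop:mexchmatrix}). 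Your worry about resolving the sign $\pm B$ uniformly is a genuine but secondary issue (handled by the overlapping-triples argument); the missing idea is the geodesic-loop analysis that lets the graph determine the diagram at all.
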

\addtocounter{theorem}{-1}%
}

Therefore the cluster automorphisms of any cluster algebra generated by
mutation-finite skew-symmetrizable matrices can be studied using just the
combinatorial properties of its marked exchange graph.

A skew-symmetrizable matrix associated to a good orbifold with order 2 orbifold
points can be unfolded to a skew-symmetric matrix associated to a surface which
covers the orbifold. In this case we show that automorphisms of the marked
exchange graph induce automorphisms of the unfolded exchange graph.

{%
\renewcommand{\thetheorem}{\ref{thm:autsintounf}}%
\begin{theorem}
	Given a skew-symmetrizable matrix $B$ which unfolds to a matrix $Q$, with
	corresponding marked exchange graphs $\widehat{\mathcal{E}}(B)$ and $\mathcal{E}(Q) = \widehat{\mathcal{E}}(Q)$,
	\[ \operatorname{Aut}\widehat{\mathcal{E}}(B) \hookrightarrow \operatorname{Aut}\mathcal{E}(Q). \]%
\end{theorem}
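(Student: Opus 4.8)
The plan is to pass from the two marked exchange graphs to their cluster algebras, lift cluster automorphisms through the unfolding, and then return to the exchange graph of $Q$. First I would recall the unfolding data: the partition of the vertex set $\{1,\dots,m\}$ of $Q$ into orbits $E_1,\dots,E_n$, the folding relation $b_{ij}=\sum_{\tilde\imath\in E_i}c_{\tilde\imath\tilde\jmath}$ (independent of the chosen $\tilde\jmath\in E_j$), and the fact that each mutation $\mu_k$ of $B$ corresponds to the composite mutation $\widetilde\mu_k=\prod_{\tilde\imath\in E_k}\mu_{\tilde\imath}$ of $Q$, which is well defined throughout the mutation class because the vertices of each orbit remain pairwise non-adjacent. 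This yields a bijection between the seeds of $B$ and the \emph{orbit-seeds} of $Q$ (those reachable by composite mutations), intertwining $\mu_k$ with $\widetilde\mu_k$. Since $B$ is mutation-finite its unfolding $Q$ is a mutation-finite skew-symmetric matrix, so Corollary~\ref{cor:mfquiv-exchaut} gives $\operatorname{Aut}\mathcal{E}(Q)\cong\operatorname{Aut}\mathcal{A}(Q)$, while Theorem~\ref{thm:mexchaut} gives $\operatorname{Aut}\widehat{\mathcal{E}}(B)=\operatorname{Aut}\mathcal{A}(B)$. It therefore suffices to construct an injective group homomorphism $\Psi\colon\operatorname{Aut}\mathcal{A}(B)\hookrightarrow\operatorname{Aut}\mathcal{A}(Q)$.

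Next I would construct $\Psi$. A cluster automorphism $f$ of $\mathcal{A}(B)$ is determined by its action on the initial seed, namely a permutation $\sigma\in\operatorname{Sym}(n)$ of the initial cluster variables that is an automorphism or anti-automorphism of $B$; because cluster automorphisms preserve the skew-symmetrizer, $\sigma$ preserves the orbit sizes, so $\lvert E_i\rvert=\lvert E_{\sigma(i)}\rvert$ for all $i$. I would then lift $\sigma$ to a permutation $\widetilde\sigma$ of the vertices of $Q$ with $\widetilde\sigma(E_i)=E_{\sigma(i)}$, chosen so that $\widetilde\sigma$ is an automorphism (resp.\ anti-automorphism) of $Q$ whenever $\sigma$ is one of $B$. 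Such a lift produces a cluster automorphism $\Psi(f)=\widetilde f$ of $\mathcal{A}(Q)$, and by construction $\widetilde f$ commutes with the composite mutations $\widetilde\mu_k$, so under the seed bijection above it restricts on orbit-seeds to the transport of $f$.

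Finally I would verify that $\Psi$ is a homomorphism and is injective. The homomorphism property follows from functoriality of the lift: composing cluster automorphisms of $B$ corresponds to composing the permutations $\sigma$, which lifts to composition of the $\widetilde\sigma$. For injectivity, if $\Psi(f)=\widetilde f=\operatorname{id}$ then $\widetilde f$ fixes every orbit-seed of $Q$; transporting back through the seed bijection shows that $f$ fixes every seed of $B$, and a cluster automorphism fixing all seeds is the identity. The main obstacle is the middle step: producing the vertex-level lift $\widetilde\sigma$ as a genuine (anti-)automorphism of $Q$. The folding relation only pins down the orbit-sums of the entries $c_{\tilde\imath\tilde\jmath}$, not their distribution within each orbit, so matching individual arrow weights requires exploiting the symmetry of the unfolding, i.e.\ the action of the deck group whose orbits are the $E_i$; for the unfoldings coming from good orbifolds this is the involution interchanging the sheets of the covering surface, and lifting $\sigma$ amounts to lifting the corresponding mapping class to the cover. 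I would also need to fix a consistent system of lifts (the admissible $\widetilde\sigma$ form a torsor under the deck group) so that $f\mapsto\widetilde f$ is well defined and respects composition.
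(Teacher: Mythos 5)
Your opening reduction---replacing $\operatorname{Aut}\widehat{\mathcal{E}}(B)$ and $\operatorname{Aut}\mathcal{E}(Q)$ by the cluster automorphism groups via Theorem~\ref{thm:mexchaut} and Corollary~\ref{cor:mfquiv-exchaut}---matches the paper's strategy, but the construction of $\Psi$ contains a genuine gap. You assert that a cluster automorphism $f$ of $\mathcal{A}(B)$ acts on the initial seed as ``a permutation $\sigma\in\operatorname{Sym}(n)$ of the initial cluster variables.'' That is false in general: a cluster automorphism sends the initial cluster to some \emph{other} cluster in the mutation class whose exchange matrix is $\pm B$ (for instance, the order-$5$ rotation in type $A_2$ sends $x\mapsto\frac{1+y}{x}$, which is not a permutation of $\{x,y\}$). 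Your $\Psi$ is therefore only defined on the subgroup of automorphisms stabilising the initial cluster setwise, not on all of $\operatorname{Aut}\mathcal{A}(B)$. The missing ingredient is Theorem~\ref{thm:mnexch}: a general $\phi\in\operatorname{Aut}\widehat{\mathcal{E}}(B)$ is realised on the initial labelled seed by a full word $\mu_{k_1}\dotsb\mu_{k_r}\sigma$ in the global mutation group $M_n$, and it is this entire word---not just its permutation part---that must be lifted, replacing each $\mu_{k_i}$ by the composite mutation $\widetilde{\mu}_{k_i}$ and $\sigma$ by the composite permutation $\widetilde{\sigma}$.

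Once the whole word is lifted, your acknowledged ``main obstacle'' largely dissolves: one never needs $\widetilde{\sigma}$ alone to be an (anti-)automorphism of $Q$, only the full composite element of $M_m$ to carry $C$ to $\pm C$. This follows from the defining property of an unfolding, namely that $C\cdot(\widetilde{\mu}_{k_1}\dotsb\widetilde{\mu}_{k_r})$ is an unfolding of $B\cdot(\mu_{k_1}\dotsb\mu_{k_r})$, combined with the fact that $\phi$ preserves the marking so that $d_i=d_{\sigma(i)}$ and $\widetilde{\sigma}$ is well defined blockwise. Your proposed detour through deck-group symmetries of the covering is not needed for this theorem (and, as you note, you have not supplied it); the argument is purely combinatorial at the level of $M_n\to M_m$. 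I would also flag that injectivity should be checked on the image of the initial seed rather than via a lift of permutations: distinct automorphisms of $\widehat{\mathcal{E}}(B)$ give distinct words up to the stabiliser of the initial labelled seed, and their lifts act differently on the unfolded seed.
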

\addtocounter{theorem}{-1}%
}

We finish the paper with a conjecture generalising a result of Br\"{u}stle and
Qiu linking the tagged mapping class group of a surface with the cluster
automorphisms of the corresponding surface cluster algebra.
{%
\renewcommand{\thetheorem}{\ref{conj:orbmcg}}%
\begin{conj}%
	For a cluster algebra $\mathcal{A}$ arising from the triangulation of an orbifold
	$\mathcal{O}$
	\[ \operatorname{MCG}_{\bowtie}(\mathcal{O}) \cong \operatorname{Aut}^+\mathcal{A}. \]
\end{conj}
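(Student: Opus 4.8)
The plan is to parallel the surface argument of Br\"{u}stle and Qiu~\cite{BQ-mcg} while exploiting the two structural results of this paper: the identification $\operatorname{Aut}\mathcal{A} = \operatorname{Aut}\widehat{\mathcal{E}}_\mathcal{A}$ of Theorem~\ref{thm:mexchaut} and the embedding into the unfolded exchange graph of Theorem~\ref{thm:autsintounf}. The starting point is the dictionary of Felikson, Shapiro and Tumarkin~\cite{FST-orbifolds}, extended to the tagged setting, which should give a bijection between tagged triangulations of $\mathcal{O}$ and clusters of $\mathcal{A}$ that intertwines flips with mutations. An orientation-preserving homeomorphism of $\mathcal{O}$ permutes tagged arcs, hence tagged triangulations, preserving flip-adjacency and acting as the identity (rather than the opposite) on the associated skew-symmetrizable matrices; up to isotopy this yields a well-defined homomorphism $\operatorname{MCG}_{\bowtie}(\mathcal{O}) \to \operatorname{Aut}^+\mathcal{A}$, where the $+$ records the orientation-preserving hypothesis.

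Injectivity is the Alexander-method step: a mapping class fixing every tagged triangulation fixes a filling collection of tagged arcs, and an equivariant change-of-coordinates argument for orbifolds with order-two points forces it to be isotopic to the identity. The substance of the conjecture is surjectivity, and here I would route through the covering. Let $\pi\colon S \to \mathcal{O}$ be the orbifold cover whose deck group $G$ realises the unfolding of $B$ to the skew-symmetric matrix $Q$ of $S$, as in~\cite[Section 12]{FST-orbifolds} and~\cite{FST-unfoldings}. Tagged triangulations of $\mathcal{O}$ lift to $G$-invariant tagged triangulations of $S$, so Theorem~\ref{thm:autsintounf} embeds $\operatorname{Aut}^+\mathcal{A} = \operatorname{Aut}^+\widehat{\mathcal{E}}(B)$ into $\operatorname{Aut}\mathcal{E}(Q)$, which by Corollary~\ref{cor:mfquiv-exchaut} together with~\cite{BQ-mcg} is $\operatorname{MCG}_{\bowtie}(S)$. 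The goal then becomes to identify the image with the subgroup of $G$-symmetric mapping classes and to show that this subgroup descends to $\operatorname{MCG}_{\bowtie}(\mathcal{O})$.

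Concretely, I would show first that a $G$-equivariant self-homeomorphism of $S$ descends to $\mathcal{O}$ and that, conversely, every element of $\operatorname{MCG}_{\bowtie}(\mathcal{O})$ lifts to a $G$-equivariant mapping class, giving $\operatorname{MCG}_{\bowtie}(\mathcal{O}) \cong \operatorname{MCG}_{\bowtie}^{G}(S)$, where the extension involving $G$ itself must be tracked carefully. On the algebraic side the markings on $\widehat{\mathcal{E}}(B)$ encode exactly the data that the unfolding must respect, so an automorphism in the image of Theorem~\ref{thm:autsintounf} is forced to commute with the $G$-action on $\mathcal{E}(Q)$; combined with Br\"{u}stle and Qiu this matches it with a $G$-symmetric mapping class of $S$. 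Chasing the two squares then yields $\operatorname{Aut}^+\mathcal{A} \cong \operatorname{MCG}_{\bowtie}^{G}(S) \cong \operatorname{MCG}_{\bowtie}(\mathcal{O})$.

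The principal obstacle I expect is precisely the identification of the image of Theorem~\ref{thm:autsintounf} with the $G$-symmetric mapping classes: the embedding is not obviously surjective onto the symmetric subgroup, and one must rule out symmetric mapping classes of $S$ whose induced exchange-graph automorphism fails to fix the markings of $\widehat{\mathcal{E}}(B)$, as well as the reverse inclusion. Secondary difficulties are the usual low-complexity degeneracies — orbifolds with few arcs, the interaction of tagging at punctures with the order-two orbifold points, and once-punctured cases — where extra exchange-graph symmetries appear and the bijection with tagged triangulations must be adjusted, exactly as in the surface theory. These special cases, rather than the generic argument, are where I would expect the real work to lie.
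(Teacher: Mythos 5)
There is no proof in the paper to compare yours against: the statement is left as Conjecture~\ref{conj:orbmcg}, and the only support the paper offers is the heuristic from~\cite[Remark 4.15]{FSTT-growth} relating the cluster mapping class group of $\mathcal{A}$ to $\operatorname{MCG}(\mathcal{O})$ via the extra $\mathbb{Z}_2$ factors at punctures, together with one worked example (the disc with four boundary marked points and one interior orbifold point, of type $B_3$).

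Your proposal is a sensible programme and it deploys the paper's machinery (Theorem~\ref{thm:mexchaut}, Theorem~\ref{thm:autsintounf}, Corollary~\ref{cor:direct_subgroup}, Corollary~\ref{cor:mfquiv-exchaut} combined with~\cite{BQ-mcg}) in the intended way, but it is not a proof. The step you yourself flag as the principal obstacle --- identifying the image of $\operatorname{Aut}^+\mathcal{A}$ inside $\operatorname{Aut}\mathcal{E}(Q)\cong\operatorname{MCG}_{\bowtie}(S)$ with the subgroup of deck-group-equivariant mapping classes, and then establishing the Birman--Hilden-type descent $\operatorname{MCG}_{\bowtie}^{G}(S)\cong\operatorname{MCG}_{\bowtie}(\mathcal{O})$, which is not automatic for covers branched over order-two orbifold points --- is precisely the substance of the conjecture, and you leave both inclusions unresolved. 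Further genuine gaps: the paper only asserts that \emph{almost all} mutation-finite matrices unfold, so your route through $\mathcal{E}(Q)$ is unavailable for orbifolds whose diagrams admit no matrix unfolding; the forward map $\operatorname{MCG}_{\bowtie}(\mathcal{O})\to\operatorname{Aut}^+\mathcal{A}$ and its injectivity require an orbifold analogue of~\cite[Theorem 4.11]{ASS-auts} that is nowhere established; and the exceptional low-complexity cases excluded from~\cite[Theorem 4.7]{BQ-mcg} would need separate treatment on the covering surface. Read as a strategy the proposal is reasonable; read as a proof it has a hole exactly where the paper, for the same reason, stops at a conjecture.
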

\addtocounter{theorem}{-1}%
}
The structure of the paper is as follows: Section~\ref{sec:mut} gives basic
definitions of cluster algebras and mutations while Section~\ref{sec:exch} looks
at the exchange graph of a cluster algebra and includes proofs linking graph
automorphisms and mutation class automorphisms. Section~\ref{sec:clauts} recalls
the definition of cluster automorphisms and various known results linking these
to mutation class automorphisms and exchange graph automorphisms. The section
ends by explaining how a maximal green sequence of an acyclic quiver can be used
to construct a cluster automorphism.

In Section~\ref{sec:exchauts} we introduce the marked exchange graph which
enables us to extend these results to cluster algebras from skew-symmetrizable
matrices. We show that graph automorphisms fixing the marking are in one-to-one
correspondence with cluster automorphisms.

In Section~\ref{sec:unf} we consider unfoldings of skew-symmetrizable matrices
and show how the cluster automorphisms of a skew-symmetrizable cluster algebra
induce cluster automorphisms of its unfolded cluster algebra.
Section~\ref{sec:mcg} looks at these ideas when the skew-symmetrizable cluster
algebra is constructed from an orbifold and its unfolding gives a surface
cluster algebra.

\section{Mutations}\label{sec:mut}

A \textit{skew-symmetric} matrix is a matrix $A$ such that $A^T = -A$.
A \textit{skew-symmetrizable} matrix is a matrix $B$ such that there exists some
diagonal integer matrix $D$ with positive diagonal entries for which $BD$ is a
skew-symmetric matrix. Such a matrix with the smallest entries is called the
\textit{symmetrizing matrix} of $B$.

A \textit{quiver} is an oriented graph possibly with multiple arrows between two
vertices and in this paper we always assume that it is restricted to
having no loops or 2-cycles. If $Q$ is a quiver, then its \textit{opposite}
$Q^{\textrm{op}}$ is the quiver constructed by reversing the direction of all arrows in
$Q$.

The restrictions on the definition of a quiver ensure that quivers are in
one-to-one correspondence with skew-symmetric matrices. A given skew-symmetric
matrix $B = \left( b_{i,j} \right)_{i,j \in \lbrace 1,\dotsc,n\rbrace}$ defines
a quiver with $n$ vertices and $b_{i,j}$ arrows from the $i$-th vertex to the
$j$-th vertex if $b_{i,j} > 0$.

A \textit{diagram} is a weighted oriented graph which does not have multiple
arrows between any two vertices, in addition to having no loops or 2-cycles,
where the weights on the edges are positive integers. Similarly to quivers, if
$R$ is a diagram then its \textit{opposite} $R^{\textrm{op}}$ is constructed by reversing
all arrows in $R$.

Unlike with quivers, there is no  one-to-one correspondence between diagrams and
skew-symmetrizable matrices. Given a skew-symmetrizable matrix $B = \left(
b_{i,j} \right)_{i,j \in \lbrace 1, \dotsc, n\rbrace}$ we can construct a
diagram with $n$ vertices and an arrow from the $i$-th vertex to the $j$-th
vertex with weight $- b_{i,j} b_{j,i}$ if $b_{i,j} > 0$. Usually weights of $1$
are omitted and just shown as an unweighted arrow. However a diagram only
corresponds to a matrix if the product of weights along any chordless cycle is a
perfect square and in this case may correspond to multiple matrices.

Throughout this paper we assume that all quivers and diagrams are connected.
The results can be easily extended to disconnected diagrams, however care must
be taken as different connected components could have their arrows reversed while
other components do not, so the idea of an opposite diagram is less clear.

Let $\mathbb{K} = \mathbb{C}(x_1,\dotsc,x_n)$. A \textit{cluster} is a set of
algebraically independent elements of $\mathbb{K}$, while a \textit{labelled cluster}
is a cluster with some ordering of its elements. The individual elements in a
cluster are called \textit{cluster variables}.

A \textit{labelled seed} is a pair $(\textbf{x},B)$ where $B$ is a skew-symmetrizable
matrix and $\textbf{x}$ is a labelled cluster. Each cluster variable in the cluster
can be thought of as being attached to one of the matrix rows, or equivalently
attached to one of the vertices of the corresponding quiver or diagram.  A
\textit{seed} is a class of labelled seeds which differ only by permutations.

Throughout this paper we assume that the matrix in a seed is uniquely determined
by its cluster. This has been proved for all cluster algebras of geometric type
or generated from a non-degenerate matrix by Gekhtman, Shapiro and Vainshtein
in~\cite{GSV-exch}. In this case denote the matrix for a given cluster $\textbf{x}$
by $B(\textbf{x})$.

\begin{definition}
	Given a labelled seed $u = \left( \textbf{x}, B \right)$, where $\textbf{x} = \left(\beta_1,
	\dotsc, \beta_n \right)$ and $B = \left( b_{i,j} \right)$, then the
	\textit{mutation} $\mu_k$ acts on $u$ to give $u \cdot \mu_k = \left( \textbf{x}', B'
	\right)$ where $\textbf{x}' = \left( \beta'_i,\dotsc,\beta'_n \right)$ and $B' =
	\left( b'_{i,j} \right)$ given by
	\[ \beta'_i = \begin{cases} \beta_i & \textrm{if} \  i \not = k, \\
	\frac{\prod_{b_{j,i}>0}\beta_j^{b_{j,i}} +
	\prod_{b_{j,i}<0}\beta_j^{-b_{j,i}}}{\beta_i} & \textrm{if} \  i = k, \\
\end{cases} \]%
	\[ b'_{i,j} = \begin{cases} -b_{i,j} & \textrm{if} \  i = k \  \textrm{or} \
	j = k, \\ b_{i,j} + \frac{\left| b_{i,k} \right|b_{k,j} + b_{i,k}\left| b_{k,j} \right|}{2} &
	\textrm{otherwise}. \end{cases} \]%
\end{definition}

It is sometimes convenient to consider the local mutation $\mu_{\beta,\textbf{x}}$ of a seed
$(\textbf{x},B)$ corresponding to the mutation at the vertex associated to the cluster
variable $\beta \in \textbf{x}$. These local mutations act as functions on seeds, whereas
global mutations act on labelled seeds.

Permutations act on a labelled seed $\left( \textbf{x},B\right)$, $\textbf{x} = \left(\beta_1,
\dotsc, \beta_n \right)$, $B = \left( b_{i,j} \right)$ in the expected way
taking the $i$-th vertex to the $\sigma(i)$-th vertex and the $i$-th cluster
variable to the $\sigma(i)$-th cluster variable. Therefore
$\left(\textbf{x},B\right) \cdot \sigma = \left( \textbf{x}^\sigma, B^\sigma \right)$ where
$\textbf{x}^\sigma = \left( \beta_{\sigma^{-1}(1)}, \dotsc, \beta_{\sigma^{-1}(n)} \right)$ and
$B^\sigma = \left( b^{\sigma}_{i,j} \right)$, $b^{\sigma}_{i,j} =
b_{\sigma^{-1}(i),\sigma^{-1}(j)}$.

\begin{example}
	Given a 3 vertex seed $(\textbf{x}, B)$ as in Figure~\ref{fig:permex} and permutation
	$\sigma = \left( 1 3 2 \right)$ then $\sigma$ maps the first vertex and
	cluster variable to the third, second to first and third to second. Therefore
	$\beta^\sigma_1 = \beta_2 = \beta_{\sigma^{-1}(1)}$, $\beta^\sigma_2 =
	\beta_3$ and $\beta^\sigma_3 = \beta_1$. Similarly $B^\sigma_{1,2} = 2 =
	B_{2,3} = B_{\sigma^{-1}(1),\sigma^{-1}(2)}$ and $B^\sigma_{3,2} = -3 =
	B_{1,3}$.
\end{example}

\begin{figure}[htb]%
	\centering%
	\raisebox{-.5\height}{\shortstack{%
		\includegraphics{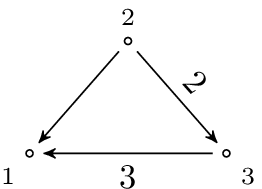} \\
		$B = \begin{pmatrix} 0 & -1 & -3 \\ 1 & 0 & 2 \\ 1 & -1 & 0 \\ 
			\end{pmatrix}$ \\
		$\textbf{x} = \left( \beta_1, \beta_2, \beta_3 \right)$
	}}
	$\qquad\xrightarrow{\quad\mbox{$\sigma = \left( 1 3 2 \right)$}\quad}\qquad$
	\raisebox{-.5\height}{\shortstack{%
		\includegraphics{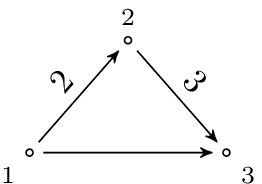} \\
		$B^{\sigma} = \begin{pmatrix} 0 & 2 & 1 \\ -1 & 0 & 1 \\ -1 & -3 & 0 \\
			\end{pmatrix}$ \\
		$\textbf{x}^{\sigma} = \left( \beta_2, \beta_3, \beta_1 \right)$
	}}
	\caption{Example of a permutation $\sigma = (1 3 2)$ acting on a seed
	$(\textbf{x},B)$ to give $(\textbf{x}, B) \cdot \sigma = (\textbf{x}^\sigma, B^\sigma)$.}
	\label{fig:permex}
\end{figure}

\begin{definition}[{\cite[Section 1]{KP-auts}}]%
	The \textit{global mutation group} for seeds of rank $n$ is given by
	\[ M_n = \left\langle \mu_1, \dotsc, \mu_n \;\middle\vert\; \mu_i^2 = 1 \right\rangle
	\rtimes \operatorname{Sym}(n) \]%
	where the $\mu_i$ are mutations and $\mu_i \sigma = \sigma \mu_{\sigma(i)}$
	for $\sigma \in \operatorname{Sym}(n)$.
\end{definition}

The \textit{labelled mutation class} $\mathcal{S}^0$ of a labelled seed $(\textbf{x}, B)$ is the
orbit of $(\textbf{x},B)$ under the action of $M_n$. The quotient by the symmetric group
action gives the \textit{mutation class} \[ \mathcal{S} = \mathchoice{\left.\raisebox{0.5ex}{$\mathcal{S}^0$}\middle/\raisebox{-1ex}{$\operatorname{Sym}(n)$}\right.}{\left.\mathcal{S}^0\middle/\operatorname{Sym}(n)\right.}{\mathcal{S}^0/\mathcal{S}^0}{\mathcal{S}^0/\operatorname{Sym}(n)}. \]%
Two seeds in the same mutation class are said to be
\textit{mutation-equivalent}.

\begin{definition}
	The \textit{cluster algebra} $\mathcal{A}(\mathcal{S})$ is the subalgebra of $\mathbb{K}$
	generated by all cluster variables occurring in the seeds in $\mathcal{S}$.
\end{definition}

A cluster algebra is said to be of \textit{finite type} if there are a finite
number of generating cluster variables in the mutation class, otherwise it is of
\textit{infinite type}. If there are a finite number of distinct matrices in
the seeds of $\mathcal{S}$, then the cluster algebra and all the matrices are said to be
\textit{mutation-finite} or of \textit{finite mutation type}, otherwise it is
\textit{mutation-infinite} or of \textit{infinite mutation type}.

\begin{definition}[{\cite[Section 2]{KP-auts}}]%
	The mutation class automorphism group
	$\operatorname{Aut}_{M_n}\left( \mathcal{S}^0 \right)$
	is the group of bijections $\phi : \mathcal{S}^0 \to \mathcal{S}^0$ which commute with the
	action of $M_n$, so for all $s \in \mathcal{S}^0$, $g \in M_n$ and $\phi \in
	\operatorname{Aut}_{M_n}(\mathcal{S}^0)$ \[ \phi(s \cdot g) = \phi(s)\cdot g. \]%
\end{definition}

\section{Exchange graphs}\label{sec:exch}

Fomin and Zelevinsky in~\cite{FZ-CA1} developed the idea of the exchange graph of a cluster
algebra to better visualise the relations in a mutation class. These were also
an important tool in their classification of finite type cluster algebras
in~\cite{FZ-CA2}.

\begin{definition}
	The \textit{exchange graph} $\mathcal{E}(\mathcal{S})$ of a mutation class $\mathcal{S}$ is constructed with
	vertices for each seed in $\mathcal{S}$ and an edge between two seeds $u$ and $v$ if and only
	if there is a single local mutation $\mu$ such that $\mu(u) = v$.

	The \textit{labelled exchange graph} $\Delta(\mathcal{S}^0)$ of a labelled mutation
	class $\mathcal{S}^0$ is constructed with a vertex for each labelled seed in $\mathcal{S}^0$ and
	an edge labelled $i$ between two labelled seeds $u$ and $v$ if and only if $u
	\cdot \mu_i = v$ (and conversely $v \cdot \mu_i  = u$).
\end{definition}

\begin{figure}
	\centering%
	\includegraphics{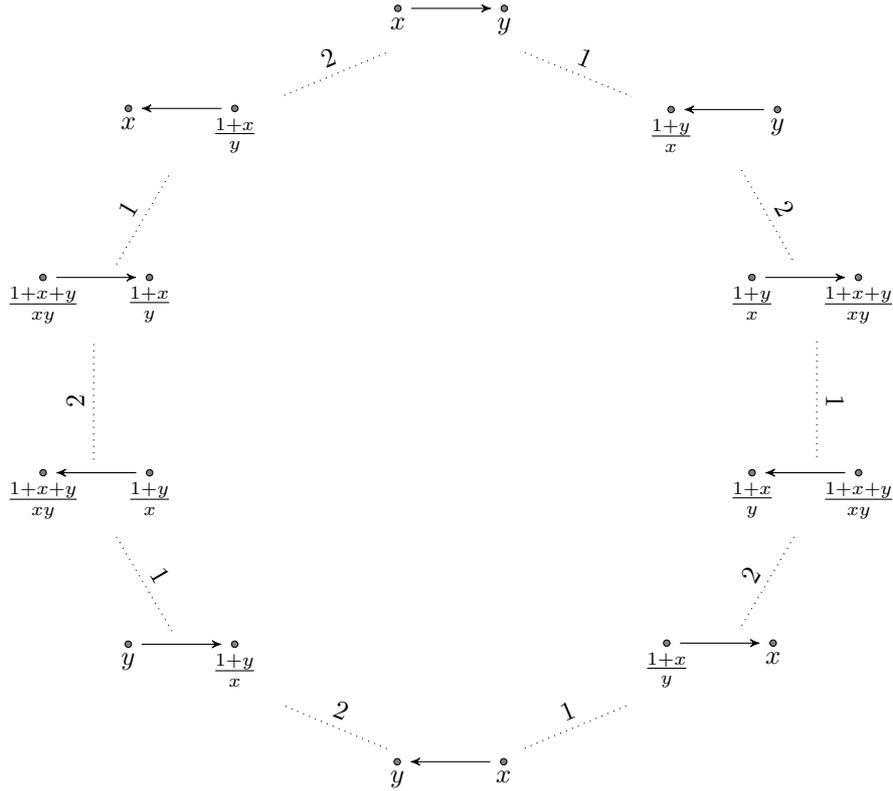}
	\caption{Labelled exchange graph for the mutation class of type $A_2$.}
	\label{fig:a2-lexch}
\end{figure}
\begin{figure}
	\centering%
	\includegraphics{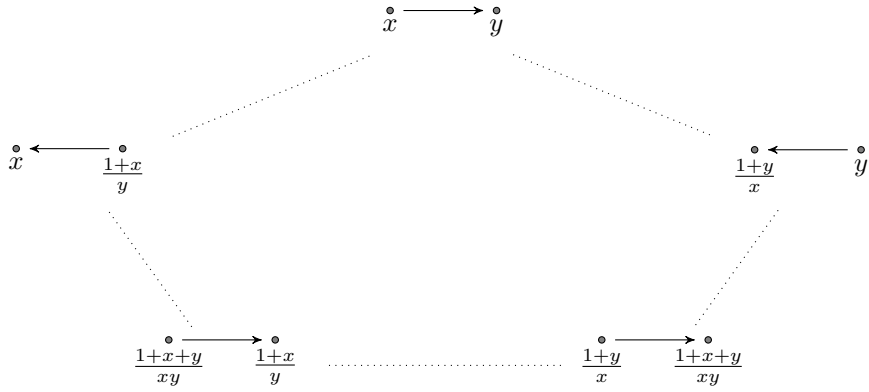}
	\caption{Exchange graph for the mutation class of type $A_2$.}
	\label{fig:a2-exch}
\end{figure}

\begin{example}[$A_2$]%
	The exchange graph for the cluster algebra of type $A_2$ is the well known
	pentagon, as seen in Figure~\ref{fig:a2-exch}. The labelled exchange graph is
	a decagon shown in Figure~\ref{fig:a2-lexch}, with the permutation acting by
	taking a seed to its antipodal seed.
\end{example}

\begin{figure}
	\centering%
	\includegraphics{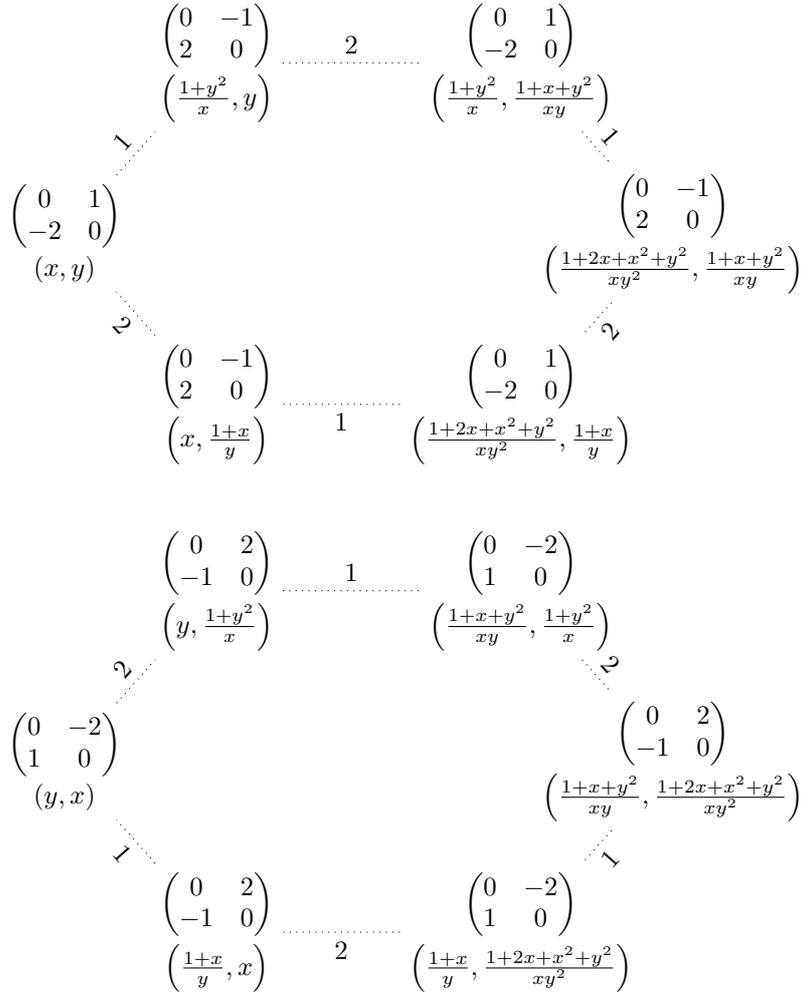}
	\caption{Labelled exchange graph for the mutation class of type $B_2$.}
	\label{fig:B2-lexch}
\end{figure}
\begin{figure}
	\centering%
	\includegraphics{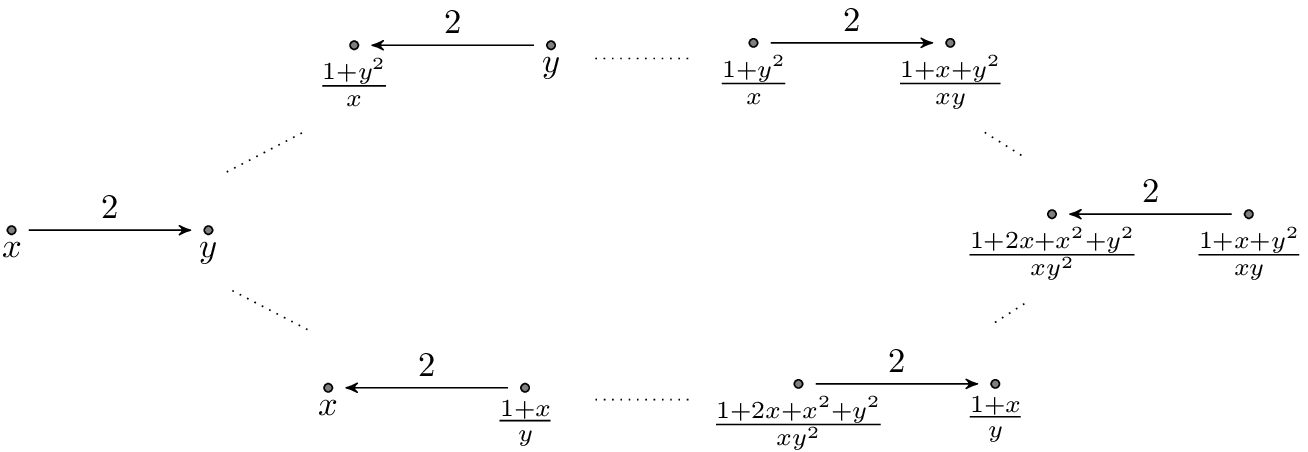}
	\caption{Exchange graph for the mutation class of type $B_2$.}
	\label{fig:B2-exch}
\end{figure}

\begin{example}[$B_2$]%
	The exchange graph for a cluster algebra of type $B_2$ is a hexagon, as shown
	in Figure~\ref{fig:B2-exch}. The labelled exchange graph however is the
	disjoint union of two hexagons as shown in Figure~\ref{fig:B2-lexch}. The
	permutation interchanging the cluster variables in a labelled seed gives
	another labelled seed which cannot be obtained from the first though just
	mutations, so any labelled seed has a permuted counterpart in the other
	connected component.
\end{example}

\begin{definition}
	The \textit{exchange graph automorphism group} $\operatorname{Aut}\mathcal{E}(\mathcal{S})$ is the group
	of permutations $\sigma$ of the vertex set of the exchange graph such that
	there is an edge between two vertices $u$ and $v$ if and only if there is an
	edge between $\sigma(u)$ and $\sigma(v)$.

	The \textit{labelled exchange graph automorphisms} in $\operatorname{Aut}\Delta(\mathcal{S}^0)$ must
	also preserve the labelling of the edges.
\end{definition}

\begin{theorem}\label{thm:pullback}
	For a labelled mutation class $\mathcal{S}^0$ with quotient $\mathcal{S}$ and corresponding
	exchange graphs $\Delta(\mathcal{S}^0)$ and $\mathcal{E}(\mathcal{S})$, then
	\[\operatorname{Aut}\mathcal{E}(\mathcal{S}) \hookrightarrow \operatorname{Aut}\Delta(\mathcal{S}^0).\]%
\end{theorem}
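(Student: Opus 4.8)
The plan is to realise $\Delta(\mathcal{S}^0)$ as a covering of $\mathcal{E}(\mathcal{S})$ and to lift each graph automorphism of the base. Let $\pi\colon \Delta(\mathcal{S}^0)\to\mathcal{E}(\mathcal{S})$ be the map forgetting the labelling, sending a labelled seed to its $\operatorname{Sym}(n)$-class. First I would check that $\pi$ is a covering of graphs: at a labelled seed $u$ there is exactly one incident edge of each label $1,\dots,n$, since each $\mu_i$ is an involution on labelled seeds, and these project to the $n$ distinct local-mutation edges at $\pi(u)$. Distinctness uses that the $n$ neighbours of a seed in $\mathcal{E}(\mathcal{S})$ are genuinely distinct, so $\pi$ restricts to a bijection between the edges at $u$ and the edges at $\pi(u)$. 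Because the matrix is determined by the cluster, the only permutation fixing a labelled seed is the identity, so each fibre $\pi^{-1}(\bar u)$ is a free $\operatorname{Sym}(n)$-torsor of size $n!$; in particular every one of the $n!$ bijections between $\{1,\dots,n\}$ and the local-mutation directions at $\bar u$ is realised by exactly one labelled seed over $\bar u$.

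Next, given $\phi\in\operatorname{Aut}\mathcal{E}(\mathcal{S})$ I would construct a label-preserving lift $\tilde{\phi}$ covering $\phi$, that is, with $\pi\tilde{\phi}=\phi\pi$ and $\tilde{\phi}(u\cdot\mu_i)=\tilde{\phi}(u)\cdot\mu_i$ for all $i$. Fixing a basepoint $u_0$ in each connected component of $\Delta(\mathcal{S}^0)$, I would use the previous paragraph to define $\tilde{\phi}(u_0)$ as the unique labelled seed over $\phi(\pi(u_0))$ whose labelling is the $\phi$-image of the labelling of $u_0$, and then propagate along labelled mutation paths by $\tilde{\phi}(u_0\cdot\mu_{i_1}\cdots\mu_{i_k})=\tilde{\phi}(u_0)\cdot\mu_{i_1}\cdots\mu_{i_k}$. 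Label-preservation and the covering identity then hold by construction, and injectivity of $\phi\mapsto\tilde{\phi}$ is immediate: if $\tilde{\phi}=\operatorname{id}$ then $\phi\pi=\pi$, so $\phi=\operatorname{id}$ as $\pi$ is surjective. The homomorphism property follows because this label-transporting lift is the \emph{unique} label-preserving lift of $\phi$ sending $u_0$ to $\tilde{\phi}(u_0)$, so $\tilde{\phi}_1\tilde{\phi}_2$ and $\widetilde{\phi_1\phi_2}$ agree on basepoints and hence everywhere.

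The main obstacle is the well-definedness of this propagation: I must show that whenever a word $\mu_{i_1}\cdots\mu_{i_k}$ fixes $u_0$ it also fixes $\tilde{\phi}(u_0)$. Equivalently, $\phi$ must send loops in $\mathcal{E}(\mathcal{S})$ of trivial monodromy (those lifting to closed loops in $\Delta(\mathcal{S}^0)$) to loops of trivial monodromy. This is the heart of the argument, and I expect it to be the hardest step. I would reduce it to the local structure of the exchange graph: every relation among mutation sequences is generated by the chordless cycles coming from rank-$2$ sub-seeds (the $4$-, $5$-, $6$- and $8$-cycles corresponding to types $A_1\times A_1$, $A_2$, $B_2$ and $G_2$), and the monodromy of such a cycle is determined by its length. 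Since a graph automorphism preserves cycle lengths, $\phi$ carries each generating cycle to another generating cycle with the same monodromy; as these generate the subgroup of trivial-monodromy loops, $\phi$ preserves that subgroup and the lift is well defined. Verifying that these rank-$2$ cycles do generate all relations, namely the simple-connectedness of the exchange graph relative to its rank-$2$ cells, is the point at which I would appeal to the established structure theory of exchange graphs.
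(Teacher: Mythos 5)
Your construction is essentially the paper's. The paper also builds $\phi^\Delta$ by choosing, over each $\phi[u]$, the unique ordering of the cluster that makes the transported edge-labels come out right, and then propagating to neighbouring seeds; your covering-space packaging, the verification that each fibre of $\pi\colon\Delta(\mathcal{S}^0)\to\mathcal{E}(\mathcal{S})$ is a free $\operatorname{Sym}(n)$-torsor identified with the set of labellings of the $n$ distinct local-mutation directions, and your injectivity argument all match what the paper does (the homomorphism property is handled in the proposition immediately following Theorem~\ref{thm:pullback}, by comparing the two candidate labelled seeds after one further mutation).

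The difficulty is the step you yourself single out as the hardest. You reduce well-definedness of the propagation to the claim that every relation among mutation sequences --- equivalently, that the fundamental group of $\mathcal{E}(\mathcal{S})$ is normally generated by the $4$-, $5$-, $6$- and $8$-cycles coming from rank-$2$ cofrozenisations --- and you propose to ``appeal to the established structure theory of exchange graphs'' for this. That statement is not established in the generality in which the theorem is stated: it is known for finite type (the generalized associahedra), for cluster algebras from surfaces (via the simple connectivity of the flip complex), and in some further cases, but for an arbitrary labelled mutation class --- and Theorem~\ref{thm:pullback} is not even restricted to mutation-finite matrices --- it remains a conjecture. So the citation you are counting on does not exist, and the proof is incomplete at exactly its load-bearing point. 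There is also a small logical slip nearby: the $5$-cycles have monodromy a transposition, so the rank-$2$ cycles do not generate the subgroup of trivial-monodromy loops; what you actually need is that their conjugates generate all of $\pi_1$, and then that $\phi$ intertwines the two monodromy homomorphisms, verified on those generators (which in turn requires knowing that $\phi$ sends each geodesic loop to the geodesic loop in the transported directions). In fairness, the paper's own proof silently assumes the same compatibility --- it asserts that ``continuing this construction for all seeds'' yields a consistent labelling without checking that different mutation paths to the same seed agree --- so you have correctly located a genuine difficulty rather than invented one; but your proposal does not close it.
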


\begin{proof}
	To show this we construct a unique $\phi^\Delta \in \operatorname{Aut}\Delta(\mathcal{S}^0)$ for each
	$\phi \in \operatorname{Aut}\mathcal{E}(\mathcal{S})$. Let $\textbf{x}(v)$ denote the cluster of a seed $v$.

	Choose a seed $u$ in $\Delta(\mathcal{S}^0)$, then for each $i \in \lbrace 1, \dotsc, n
	\rbrace $ there is a vertex $v^i = u \cdot \mu_i$ with a corresponding edge $u
	- v^i$ labelled $i$ in the labelled exchange graph. The cluster
	$\textbf{x}(u) = (\beta_1,\dotsc,\beta_i,\dotsc,\beta_n)$ then differs from
	the cluster $\textbf{x}(v^i) = (\beta_1,\dotsc,\beta'_i,\dotsc,\beta_n)$ in just the
	$i$-th cluster variable.

	Under the quotient by the symmetric group action the labelled seed $u$ gets
	mapped to a seed $[u]$ and in the exchange graph $\mathcal{E}(\mathcal{S})$ there are edges
	$[u] - [v^i]$ for each $i \in \lbrace 1,\dotsc,n \rbrace$. Each unordered
	cluster $\textbf{x}[v^i]$ differs from the unordered cluster $\textbf{x}[u]$ in a single
	variable, just as the corresponding labelled clusters do.
	
	The exchange graph automorphism $\phi$ maps $[u]$ to some seed $\phi[u]$ and
	preserves all edges in the graph, so $\phi[u]$ is connected to $\phi[v^i]$ for each $i$.
	Therefore each $\phi[v^i]$ is a single mutation from $\phi[u]$, and so the
	unordered cluster $\textbf{x}(\phi[u]) =
	[\gamma_1,\dotsc,\gamma_{k_i},\dotsc,\gamma_n]$ differs from $\textbf{x}(\phi[v^i]) =
	[\gamma_1,\dotsc,\gamma'_{k_i},\dotsc,\gamma_n]$ in a single cluster variable.

	Set the image $\phi^\Delta(u)$ to be the seed defined by the labelled cluster
	$\textbf{x}\left(\phi^\Delta(u)\right) = \left( \gamma_{k_1}, \gamma_{k_2}, \dotsc,
	\gamma_{k_n} \right)$, obtained by choosing an order of the cluster
	$\textbf{x}(\phi[u])$ such that the $i$-th variable of
	$\textbf{x}\left(\phi^\Delta(v^i)\right)$ is the corresponding $\gamma'_{k_i}$, while
	all other variables are the same as for $\phi^\Delta(u)$.  This ensures that
	the edge between $\phi^\Delta(u)$ and $\phi^\Delta(v^i)$ is labelled $i$.
	Repeat this procedure with initial seed $v^i$ to get the ordering of the
	seeds connected to $\phi^\Delta(v^i)$.

	Continuing this construction for all seeds in $\Delta(\mathcal{S}^0)$ constructs images
	under $\phi^\Delta$ for all seeds in the labelled exchange graph.
	For any two seeds $s,t$ connected by an edge labelled $k$ in $\Delta(\mathcal{S}^0)$
	this construction ensures that the images $\phi^\Delta(s)$ and
	$\phi^\Delta(t)$ are also connected by an edge labelled $k$, and so
	$\phi^\Delta$ is indeed an automorphism of the labelled exchange graph.
\end{proof}

\begin{figure}
	\centering%
	\includegraphics{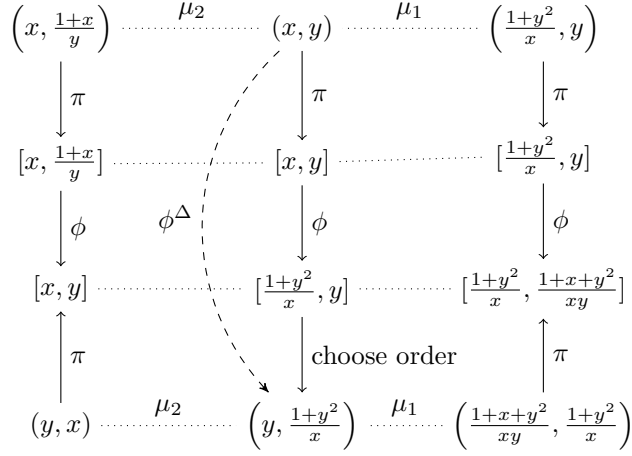}
	\caption{Commutative diagram of maps involved in Example~\ref{ex:b2-exchaut}.}
	\label{fig:b2-exchaut}
\end{figure}

\begin{example}\label{ex:b2-exchaut}
	Consider the automorphism $\phi$ of the $B_2$ exchange graph $\mathcal{E}$ shown in
	Figure~\ref{fig:B2-exch} given by a clockwise rotation by angle
	$\frac{\pi}{3}$. This automorphism pulls back to an automorphism $\phi^\Delta$
	of the labelled exchange graph $\Delta$ shown in Figure~\ref{fig:B2-lexch}.

	To determine the automorphism $\phi^\Delta$, choose an initial labelled seed
	$u = (\textbf{x},B)$ where $\textbf{x} = \left( x,y \right)$. The automorphism $\phi$ maps
	the corresponding cluster $[x,y]$ to $[\frac{1+y^2}{x},y]$ and the mutation
	$\mu_1$ takes $\left( x,y \right)$ to $\left( x,y \right) \cdot \mu_1 = \left(
	\frac{1+y^2}{x},y \right)$, whose corresponding cluster $[\frac{1+y^2}{x},y]$
	is mapped to $[\frac{1+y^2}{x},\frac{1+x+y^2}{xy}]$ by $\phi$, as shown in
	Figure~\ref{fig:b2-exchaut}.

	Denote by $\phi^\Delta \in \operatorname{Aut}\Delta$ the automorphism which
	corresponds to $\phi \in \operatorname{Aut}\mathcal{E}$ and denote the quotient by the symmetric
	group action as $\pi : \mathcal{S}^0 \to \mathcal{S}$. Then $\phi^\Delta(u)$ is a labelled seed in
	$\mathcal{S}^0$ such that $\pi\left(\phi^\Delta(u) \cdot \mu_1 \right) = \phi\left( \pi
	\left( u \cdot \mu_1 \right) \right)$.
	Hence the cluster variable which differs between $[\frac{1+y^2}{x},y]$ and
	$[\frac{1+y^2}{x},\frac{1+x+y^2}{xy}]$ needs to appear in the first position
	of the labelled cluster of $\phi^\Delta(u)$ and so
	\[\phi^\Delta\left( \textbf{x} \right) = \left(y, \frac{1+y^2}{x} \right).\]%
	This shows that the rotation of $\mathcal{E}$ actually corresponds to an
	automorphism of $\Delta$ which interchanges the two components of the graph
	(see Figure~\ref{fig:B2-lexch}) as well as rotating each component.

	Note that this automorphism takes the diagram $D =\;$
	\includegraphics{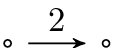} to its opposite $D^{\textrm{op}}$, however the matrix $B =
	\left(\begin{smallmatrix} 0 & 1 \\ -2 & 0\end{smallmatrix}\right)$ is not taken
	to $-B$, but rather to $-B^T$.
\end{example}

\begin{example}
	Consider the exchange graph $\mathcal{E}$ of the mutation class of type $A_2$ shown
	in Figure~\ref{fig:a2-exch}, with the labelled exchange graph $\Delta$ in
	Figure~\ref{fig:a2-lexch}. An order 5 clockwise $\frac{2\pi}{5}$ rotation
	$\phi$ of $\mathcal{E}$ is an exchange graph automorphism and so induces an
	automorphism $\phi^\Delta$ of $\Delta$.

	The cluster $\textbf{x} = [x,y]$ maps to $\phi(\textbf{x}) = [\frac{1+y}{x},y]$, so the
	labelled cluster $\hat{\textbf{x}} = \left( x,y \right)$ would be mapped to either
	$\left( \frac{1+y}{x},y\right)$ or $\left( y, \frac{1+y}{x} \right)$. To
	determine which, consider the labelled clusters adjacent to $\left( x,y
	\right)$:
	\[\left( x,y \right) \cdot \mu_1 = \left( \frac{1+y}{x}, y \right);\]%
	\[\left( x,y \right) \cdot \mu_2 = \left( x, \frac{1+x}{y} \right).\]%
	The cluster $[ \frac{1+y}{x}, y]$ is mapped to $[ \frac{1+y}{x},
	\frac{1+x+y}{xy}]$ so we need to choose an ordering for $\phi^\Delta(\hat{\textbf{x}})$
	such that $\phi^\Delta(\hat{\textbf{x}}) \cdot \mu_1$ corresponds to the same
	ordering of $[ \frac{1+y}{x}, \frac{1+x+y}{xy}]$. These two clusters
	$\phi(\textbf{x}) = [\frac{1+y}{x}, y]$ and $[\frac{1+y}{x}, \frac{1+x+y}{xy}]$
	differ by replacing $y$ with $\frac{1+x+y}{xy}$, while $\mu_1$ changes the cluster variable in the
	first position, therefore the required ordering is
	\[ \phi^\Delta(\hat{\textbf{x}}) = \left( y, \frac{1+y}{x} \right) \quad \textrm{and}
		\quad \phi^\Delta(\hat{\textbf{x}}) \cdot \mu_1 = \left( \frac{1+x+y}{xy},
	\frac{1+y}{x} \right). \]%

	This shows that $\phi$ induces the automorphism of $\Delta$ given by clockwise
	$\frac{6\pi}{5}$ rotation, which again has order 5.
\end{example}

\begin{remark}
	It is not true in general that $\operatorname{Aut}\Delta(\mathcal{S}^0) \cong \operatorname{Aut}\mathcal{E}(\mathcal{S})$, as
	$\Delta(\mathcal{S}^0)$ can have a number of connected components which are identified
	under the quotient by the symmetric group action. Any automorphism which
	changes a single connected component while fixing all others would therefore
	not project down to an automorphism of $\mathcal{E}(\mathcal{S})$. For example, in the case
	of the cluster algebra of type $B_2$, the labelled exchange graph automorphism
	given by rotating the top hexagon in Figure~\ref{fig:B2-lexch} while fixing
	the bottom hexagon would not give any valid exchange graph automorphism.
\end{remark}

Given $\phi\in \operatorname{Aut}\mathcal{E}(\mathcal{S})$ then $\phi^\Delta \in \operatorname{Aut}\Delta(\mathcal{S}^0)$ is constructed in
such a way that for $\pi : \mathcal{S}^0 \to \mathcal{S}$ the quotient by the symmetric group
action, $u\in\mathcal{S}^0$ a labelled seed and $\mu_k$ a single global mutation,
\[ \phi \left( \pi (u) \right) = \pi \left( \phi^\Delta (u) \right), \]%
\[ \phi \left( \pi \left( u \cdot \mu_k \right) \right) = \pi \left(
\phi^\Delta(u) \cdot \mu_k \right). \]%

\begin{proposition}
	The inclusion $\operatorname{Aut}\mathcal{E}(\mathcal{S}) \hookrightarrow \operatorname{Aut}\Delta(\mathcal{S}^0)$ is a homomorphism,
	that is $\left( \psi \phi \right)^\Delta = \psi^\Delta \phi^\Delta$ for any
	exchange graph automorphisms $\psi,\phi \in \operatorname{Aut}\mathcal{E}(\mathcal{S})$.
\end{proposition}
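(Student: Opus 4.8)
The plan is to exploit the fact that, by the construction in the proof of Theorem~\ref{thm:pullback}, the lift $\phi^\Delta$ is \emph{uniquely} determined by $\phi$: it is the only element of $\operatorname{Aut}\Delta(\mathcal{S}^0)$ satisfying the two compatibility relations displayed just before this proposition, namely $\phi(\pi(u)) = \pi(\phi^\Delta(u))$ and $\phi(\pi(u\cdot\mu_k)) = \pi(\phi^\Delta(u)\cdot\mu_k)$ for every labelled seed $u$ and every global mutation $\mu_k$. Given this uniqueness, to prove $(\psi\phi)^\Delta = \psi^\Delta\phi^\Delta$ it suffices to check that the composite $\psi^\Delta\phi^\Delta$ is a labelled exchange graph automorphism satisfying these same two relations with $\phi$ replaced by $\psi\phi$; the desired equality is then immediate.

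First I would observe that $\psi^\Delta\phi^\Delta \in \operatorname{Aut}\Delta(\mathcal{S}^0)$, since it is a composition of labelled exchange graph automorphisms and $\operatorname{Aut}\Delta(\mathcal{S}^0)$ is a group. Next I would verify the first relation by a chain of substitutions, applying the relation for $\psi$ at the seed $\phi^\Delta(u)$ and then the relation for $\phi$ at $u$:
\[ \pi\left(\psi^\Delta\phi^\Delta(u)\right) = \psi\left(\pi(\phi^\Delta(u))\right) = \psi\left(\phi(\pi(u))\right) = (\psi\phi)(\pi(u)). \]
The mutation-compatibility relation is checked in exactly the same fashion, now using the second relation for $\psi$ at $\phi^\Delta(u)$ followed by the second relation for $\phi$ at $u$, so that $\pi\left(\psi^\Delta\phi^\Delta(u)\cdot\mu_k\right) = (\psi\phi)(\pi(u\cdot\mu_k))$. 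Thus $\psi^\Delta\phi^\Delta$ is a labelled automorphism realising $\psi\phi$ in the sense of Theorem~\ref{thm:pullback}, and by uniqueness it must coincide with $(\psi\phi)^\Delta$.

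The one point that genuinely needs care, and which I expect to be the main obstacle, is pinning down the uniqueness of the lift precisely enough to license this final step. I would argue that any $\Phi\in\operatorname{Aut}\Delta(\mathcal{S}^0)$ with $\pi\circ\Phi = \chi\circ\pi$ is forced to agree with the $\chi^\Delta$ constructed in Theorem~\ref{thm:pullback}: at a chosen seed $u_0$ the two images are orderings of the same unordered cluster $\chi(\pi(u_0))$, and for each label $i$ the entry in position $i$ is pinned down as the unique cluster variable that changes under $\mu_i$ (since $\Phi$ preserves the edge labelled $i$), so the two orderings coincide at $u_0$. As a labelled automorphism is determined on each connected component by its value at a single vertex, it follows that $\Phi = \chi^\Delta$ everywhere. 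This is really just a restatement of the well-definedness already implicit in the construction of Theorem~\ref{thm:pullback}, so no new machinery is required, and the homomorphism property emerges as a formal consequence of that uniqueness.
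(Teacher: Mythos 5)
Your proposal is correct and follows essentially the same route as the paper: the same chain of substitutions through $\pi$ establishes that $(\psi\phi)^\Delta(u)$ and $\psi^\Delta\phi^\Delta(u)$ project to the same seed and remain in agreement after each mutation $\mu_k$, and your ``uniqueness of the lift'' argument (each position $i$ is pinned down as the unique variable changed by $\mu_i$) is just a repackaging of the paper's closing observation that two labelled seeds affected identically by every $\mu_k$ must be equal rather than merely permutations of one another. No gaps.
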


\begin{proof}
	Choose a labelled seed $u \in \mathcal{S}^0$ then
	\[ \pi\left( \left( \psi \phi \right)^\Delta (u) \right) = \left( \psi \phi
	\right) \left( \pi (u) \right) = \psi \left( \phi \left( \pi (u) \right)
\right) = \psi \left( \pi \left( \phi^\Delta (u) \right) \right) = \pi \left(
\psi^\Delta \left( \phi^\Delta (u) \right) \right). \]%
	This shows that the labelled seeds $\left( \psi \phi \right)^\Delta (u)$ and
	$\psi^\Delta \phi^\Delta (u)$ are the same up to permutation, however for any
	$k \in \lbrace 1, \dotsc, n \rbrace$
	\begin{multline*} \pi\left( \left( \psi \phi \right)^\Delta (u) \cdot \mu_k
		\right) = \left( \psi \phi \right) \left( \pi (u \cdot \mu_k) \right) = \psi
		\left( \phi \left( \pi (u \cdot \mu_k) \right) \right) = \psi \left( \pi
		\left( \phi^\Delta (u) \cdot \mu_k \right) \right) \\ = \pi \left(
	\psi^\Delta \left( \phi^\Delta (u) \cdot \mu_k \right) \right) \end{multline*}
	so after mutation in the $k$-th vertex $\left( \psi \phi \right)^\Delta (u)$
	and $\psi^\Delta \phi^\Delta (u)$ are the still same up to permutation. The
	only way that the $k$-th mutation affects two labelled seeds in the same way
	is if the labelled seeds are in fact equal and not permutations of one
	another, so
	\[ \left( \psi \phi \right)^\Delta (u) = \psi^\Delta \phi^\Delta (u) \quad
	\textrm{for any} \quad u \in \mathcal{S}^0 \]%
	and therefore $\left( \psi \phi \right)^\Delta = \psi^\Delta \phi^\Delta$.
\end{proof}

\begin{proposition}\label{prop:deltaperm}
	Let $\phi \in \operatorname{Aut}\mathcal{E}(\mathcal{S})$ with pullback $\phi^\Delta \in
	\operatorname{Aut}\Delta(\mathcal{S}^0)$, then for any labelled seed $u$ and any permutation $\sigma$
	\[ \phi^\Delta(u \cdot \sigma ) = \phi^\Delta(u) \cdot \sigma. \]%
\end{proposition}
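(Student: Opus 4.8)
The plan is to prove the identity by the same strategy used in the preceding proof that the inclusion is a homomorphism: I would show that the two labelled seeds $\phi^\Delta(u \cdot \sigma)$ and $\phi^\Delta(u) \cdot \sigma$ agree up to permutation, show that they still agree up to permutation after mutating at every vertex, and then invoke the principle that two labelled seeds with this property must be genuinely equal. Throughout I would use the defining relations of the pullback, namely $\phi \circ \pi = \pi \circ \phi^\Delta$ and $\phi(\pi(w \cdot \mu_j)) = \pi(\phi^\Delta(w) \cdot \mu_j)$, together with the fact that $\pi$ kills permutations, so $\pi(w \cdot \sigma) = \pi(w)$ for all $w$ and all $\sigma$.

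First I would apply $\pi$ directly. Since permutations are collapsed by the quotient,
\[ \pi\left(\phi^\Delta(u \cdot \sigma)\right) = \phi\left(\pi(u \cdot \sigma)\right) = \phi\left(\pi(u)\right) = \pi\left(\phi^\Delta(u)\right) = \pi\left(\phi^\Delta(u) \cdot \sigma\right), \]
so both seeds project to the same element of $\mathcal{S}$ and hence differ only by a permutation. Next I would compare them after a single mutation $\mu_k$. The key computation uses the semidirect-product relation $\mu_i \sigma = \sigma \mu_{\sigma(i)}$, which rearranges to $\sigma \mu_k = \mu_{\sigma^{-1}(k)} \sigma$. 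Applying this on both sides and then using the two pullback relations, $\pi\left(\phi^\Delta(u \cdot \sigma) \cdot \mu_k\right)$ becomes $\phi\left(\pi(u \cdot \sigma \cdot \mu_k)\right) = \phi\left(\pi(u \cdot \mu_{\sigma^{-1}(k)})\right)$, while $\pi\left((\phi^\Delta(u) \cdot \sigma) \cdot \mu_k\right) = \pi\left(\phi^\Delta(u) \cdot \mu_{\sigma^{-1}(k)}\right) = \phi\left(\pi(u \cdot \mu_{\sigma^{-1}(k)})\right)$. Thus both expressions reduce to the same seed, so the two labelled seeds still agree up to permutation after mutating at each vertex $k$.

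Finally, exactly as in the previous proposition, the only way two labelled seeds can agree up to permutation both before and after the mutation at every vertex is for them to be equal rather than genuine permutations of one another; this yields $\phi^\Delta(u \cdot \sigma) = \phi^\Delta(u) \cdot \sigma$. I expect the main obstacle to be purely a matter of index bookkeeping: getting the shift right in $\sigma \mu_k = \mu_{\sigma^{-1}(k)} \sigma$ and verifying that the \emph{same} shifted mutation $\mu_{\sigma^{-1}(k)}$ emerges on both sides, so that the pullback relation for $\mu_{\sigma^{-1}(k)}$ applies cleanly and the two sides collapse to a common seed.
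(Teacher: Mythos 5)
Your proof is correct, but it takes a genuinely different route from the paper's. The paper proves the identity by going back inside the construction of $\phi^\Delta$ from Theorem~\ref{thm:pullback}: applying $\sigma$ relabels the edge $u - v^i$ as an edge $u\cdot\sigma - v^i\cdot\sigma$ carrying the label $\sigma(i)$, so the ordering $\rho_{u\cdot\sigma}$ forced on the cluster $\textbf{x}(\phi[u])$ is exactly $\sigma$ applied to the ordering $\rho_u$ used to define $\phi^\Delta(u)$, giving the claim directly. You instead argue axiomatically from the two displayed commutation relations $\phi\circ\pi=\pi\circ\phi^\Delta$ and $\phi(\pi(w\cdot\mu_k))=\pi(\phi^\Delta(w)\cdot\mu_k)$, exactly mirroring the paper's proof that $(\psi\phi)^\Delta=\psi^\Delta\phi^\Delta$: both labelled seeds project to the same seed, and via $\sigma\mu_k=\mu_{\sigma^{-1}(k)}\sigma$ both sides of $\pi(\,\cdot\,\cdot\mu_k)$ collapse to $\phi(\pi(u\cdot\mu_{\sigma^{-1}(k)}))$, after which the separation principle (labelled seeds agreeing up to permutation before and after mutation in every direction must be equal) forces equality. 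Your index bookkeeping is right, and the same-shift check on both sides goes through. The trade-off: the paper's argument is self-contained at the level of the construction and directly justifies the remark that the initial choice of ordering of $u$ is immaterial; yours is shorter and reuses established machinery, but it leans on the displayed relations holding at the seed $u\cdot\sigma$ itself --- which, when $u\cdot\sigma$ lies in a different connected component of $\Delta(\mathcal{S}^0)$ from $u$, is precisely the coherence the explicit construction is meant to supply. Both arguments are sound at the paper's level of rigor.
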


Therefore although it looks like the construction of $\phi^\Delta$ from $\phi$
depends on the initial choice of ordering of $u$, any other ordering just gives
a permutation of $\phi^\Delta$.

\begin{proof}
	In $\Delta(\mathcal{S}^0)$ there are edges $u - v^i$ for each mutation $\mu_i$, applying
	$\sigma$ gives edges $u \cdot \sigma - v^i \cdot \sigma$ for each
	$\mu_{\sigma(i)}$. When projected $\textbf{x}(\phi[u]) = \textbf{x}(\phi[u \cdot \sigma])$
	and $\textbf{x}(\phi[v^i]) = \textbf{x}(\phi[v^i \cdot \sigma])$ for each $i$. 

	The clusters $\textbf{x}(\phi[u]) = [a, \dotsc, k_i, \dotsc]$ and $\textbf{x}(\phi[v^i]) =
	[a, \dotsc, k'_i, \dotsc]$ differ in a single cluster variable $k_i$
	to $k'_i$. In the construction of $\phi^\Delta(u)$ we specified an
	ordering $\rho_u$ on $\textbf{x}(\phi[u])$ such that the $i$-th variable
	$\rho_u(\textbf{x}(\phi[u]))_i = k_i$ for each $i$. To construct $\phi^\Delta(u \cdot
	\sigma)$ we need an ordering $\rho_{u \cdot \sigma}$ such that the
	$\sigma(i)$-th variable $\rho_{u \cdot \sigma}(\textbf{x}(\phi[u]))_{\sigma(i)} = k_i$
	so that the position of the variable which changes matches the label on the
	edge in $\Delta$. Therefore $\textbf{x}(\phi^\Delta(u \cdot \sigma)) = \rho_{u \cdot
	\sigma}(\textbf{x}(\phi[u])) = \sigma(\rho_{u}(\textbf{x}(\phi[u]))) =
	\sigma(\textbf{x}(\phi^\Delta(u))) = \textbf{x}(\phi^\Delta(u) \cdot \sigma)$.
\end{proof}

So far in this section we have proved properties of automorphisms of the
exchange graph of a cluster algebra. In the remainder of this paper we use these
results to compare these exchange graph automorphisms to other automorphisms
related to the cluster algebra.

\begin{theorem}\label{thm:mnexch}
	For a labelled mutation class $\mathcal{S}^0$ with mutation class $\mathcal{S} = 
	\mathchoice{\left.\raisebox{0.5ex}{$\mathcal{S}^0$}\middle/\raisebox{-1ex}{$\operatorname{Sym}(n)$}\right.}{\left.\mathcal{S}^0\middle/\operatorname{Sym}(n)\right.}{\mathcal{S}^0/\mathcal{S}^0}{\mathcal{S}^0/\operatorname{Sym}(n)}$ and exchange graph $\mathcal{E}(\mathcal{S})$
	\[ \operatorname{Aut}_{M_n}(\mathcal{S}^0) \cong \operatorname{Aut}\mathcal{E}(\mathcal{S}). \]%
\end{theorem}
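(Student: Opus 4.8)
The plan is to exhibit mutually inverse homomorphisms between the two groups, using the pullback construction of Theorem~\ref{thm:pullback} in one direction and the quotient by $\operatorname{Sym}(n)$ in the other. The starting observation is that $\operatorname{Aut}_{M_n}(\mathcal{S}^0)$ sits naturally inside $\operatorname{Aut}\Delta(\mathcal{S}^0)$: since $M_n = \langle \mu_1,\dotsc,\mu_n\rangle \rtimes \operatorname{Sym}(n)$, a bijection of $\mathcal{S}^0$ commutes with all of $M_n$ precisely when it commutes with every mutation $\mu_i$ and with every permutation $\sigma$. Commuting with the $\mu_i$ is exactly the condition of preserving the $i$-labelled edges of $\Delta(\mathcal{S}^0)$, so $\operatorname{Aut}_{M_n}(\mathcal{S}^0)$ is the subgroup of $\operatorname{Aut}\Delta(\mathcal{S}^0)$ whose elements additionally commute with $\operatorname{Sym}(n)$.

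First I would define a map $F\colon \operatorname{Aut}\mathcal{E}(\mathcal{S}) \to \operatorname{Aut}_{M_n}(\mathcal{S}^0)$ sending $\phi$ to its pullback $\phi^\Delta$. By Theorem~\ref{thm:pullback} the pullback lies in $\operatorname{Aut}\Delta(\mathcal{S}^0)$, and Proposition~\ref{prop:deltaperm} gives $\phi^\Delta(u\cdot\sigma)=\phi^\Delta(u)\cdot\sigma$, so $\phi^\Delta$ commutes with $\operatorname{Sym}(n)$ as well and therefore lands in $\operatorname{Aut}_{M_n}(\mathcal{S}^0)$. That $F$ is a homomorphism is precisely the earlier identity $(\psi\phi)^\Delta = \psi^\Delta\phi^\Delta$, and injectivity is immediate from $\phi(\pi(u)) = \pi(\phi^\Delta(u))$, which recovers $\phi$ from $\phi^\Delta$. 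In the other direction I would define $G\colon \operatorname{Aut}_{M_n}(\mathcal{S}^0)\to\operatorname{Aut}\mathcal{E}(\mathcal{S})$ by $G(\Phi)=\bar\Phi$, where $\bar\Phi$ is the map on the quotient characterised by $\bar\Phi\circ\pi = \pi\circ\Phi$. This is well defined on $\mathcal{E}(\mathcal{S})$ because $\Phi$ commutes with permutations, so $[u]=[v]$ forces $[\Phi(u)]=[\Phi(v)]$; it is a graph automorphism because $\Phi$ commutes with the mutations and hence carries edges of $\mathcal{E}(\mathcal{S})$ to edges; and it is visibly a homomorphism.

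It then remains to verify that $F$ and $G$ are mutually inverse. The composite $G\circ F$ is the identity immediately, since for $\phi\in\operatorname{Aut}\mathcal{E}(\mathcal{S})$ one has $\overline{\phi^\Delta}([u]) = [\phi^\Delta(u)] = \phi([u])$ using $\pi\circ\phi^\Delta = \phi\circ\pi$. The main obstacle is the reverse composite $F\circ G$, that is, showing $(\bar\Phi)^\Delta = \Phi$ for every $\Phi\in\operatorname{Aut}_{M_n}(\mathcal{S}^0)$. Both maps lie in $\operatorname{Aut}_{M_n}(\mathcal{S}^0)$, both commute with all mutations, and both induce the same automorphism $\bar\Phi$ of $\mathcal{E}(\mathcal{S})$, so for each $u$ there is a permutation $\tau_u$ with $(\bar\Phi)^\Delta(u) = \Phi(u)\cdot\tau_u$.

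To force $\tau_u = \mathrm{id}$ I would run the same distinctness argument used in the homomorphism proposition: applying $\mu_k$ and using that both maps commute with it, together with the semidirect-product relation $\tau_u\mu_k = \mu_{\tau_u^{-1}(k)}\tau_u$, yields $[\Phi(u)\cdot\mu_k] = [\Phi(u)\cdot\mu_{\tau_u^{-1}(k)}]$ for every $k$. Since the $n$ neighbours of a vertex in the exchange graph are distinct, this forces $\tau_u^{-1}(k)=k$ for all $k$, hence $\tau_u=\mathrm{id}$ and $(\bar\Phi)^\Delta=\Phi$. This distinctness-of-neighbours step is where the real content lies; everything else is bookkeeping with the relations $\phi\circ\pi = \pi\circ\phi^\Delta$ and $\phi^\Delta(u\cdot\sigma)=\phi^\Delta(u)\cdot\sigma$. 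With both composites trivial, $F$ and $G$ are inverse isomorphisms, giving $\operatorname{Aut}_{M_n}(\mathcal{S}^0)\cong\operatorname{Aut}\mathcal{E}(\mathcal{S})$.
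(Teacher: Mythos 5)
Your proposal is correct and follows essentially the same route as the paper: the quotient map $\Phi\mapsto\bar\Phi$ in one direction and the pullback $\phi\mapsto\phi^\Delta$ of Theorem~\ref{thm:pullback} (together with Proposition~\ref{prop:deltaperm}) in the other. You are somewhat more careful than the paper in explicitly checking that the two maps are mutually inverse, via the distinctness-of-neighbours argument to kill the permutation $\tau_u$, but this is the same mechanism the paper already uses in its homomorphism proposition, so the content is the same.
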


\begin{proof}
	Let $\phi \in \operatorname{Aut}_{M_n}(\mathcal{S}^0)$ and let $\psi$ be the transformation of
	$\mathcal{E}(\mathcal{S})$ given by $\psi([u]) = [\phi(u)]$. The automorphism $\phi$
	commutes with permutations so the choice of order of $u$ does not matter,
	because for any other choice of order $u'$ there is some permutation $\sigma$
	such that $u' = u \cdot \sigma$ and then $[\phi(u')] = [\phi(u \cdot \sigma)]%
	= [\phi(u) \cdot \sigma] = [\phi(u)]$.

	For any two seeds $u$ and $v = u \cdot \mu$ related by a single mutation $\mu$
	there is an edge $[u] - [v]$ in $\mathcal{E}(\mathcal{S})$.  Then $\psi([v]) = \psi([u \cdot
	\mu]) = [\phi(u \cdot \mu)] = [\phi(u) \cdot \mu] = \tilde{\mu}[\phi(u)] =
	\tilde{\mu}\psi([u])$ where $\tilde{\mu}$ is the single local mutation on
	$[u]$ corresponding to the global mutation $\mu$ on $u$. Hence there is an
	edge $\psi[u] - \psi[v]$ in $\mathcal{E}(\mathcal{S})$, so $\psi \in \operatorname{Aut}\mathcal{E}(\mathcal{S})$ and
	$\operatorname{Aut}_{M_n}(\mathcal{S}^0) \subset \operatorname{Aut}\mathcal{E}(\mathcal{S})$.

	To show the converse, let $\psi\in\operatorname{Aut}\mathcal{E}(\mathcal{S})$ which pulls back to
	$\psi^\Delta \in \operatorname{Aut}\Delta(\mathcal{S}^0)$ by Theorem~\ref{thm:pullback}. Let $\phi
	: \mathcal{S}^0 \to \mathcal{S}^0$ be the map given by $u \mapsto \psi^\Delta(u)$.  Any
	element of $M_n$ can be written as a product of mutations and permutations, so
	to prove $\phi \in \operatorname{Aut}_{M_n}(\mathcal{S}^0)$ it suffices to show that $\phi$ commutes
	with any permutation and any mutation.

	Let $\sigma$ be a permutation, then by Proposition~\ref{prop:deltaperm} 
	\[\phi(\sigma u) = \psi^\Delta(\sigma u) = \sigma \psi^\Delta(u) = \sigma
	\phi(u). \]%

	Let $u$ and $v = u \cdot \mu$ be two labelled seeds related by a single mutation, then
	$\psi^\Delta$ is an automorphism of $\Delta(\mathcal{S}^0)$, so
	\[ \phi(u \cdot \mu) = \psi^\Delta(u \cdot \mu) = \psi^\Delta(u) \cdot \mu =
	\phi(u) \cdot \mu. \]%
\end{proof}

\section{Cluster automorphisms}\label{sec:clauts}

Cluster automorphisms were introduced by Assem, Schiffler and
Shramchenko in~\cite{ASS-auts}. In their paper the authors computed some
particular examples of automorphism groups and drew links between automorphisms
of the cluster algebra of a surface and the mapping class group of that surface.
This correspondence was later proved by Br\"{u}stle and Qiu in~\cite{BQ-mcg} for
all surfaces except a select few, as discussed in Section~\ref{sec:mcg}.

\begin{definition}[\cite{ASS-auts}]\label{def:claut}
	A $\mathbb{K}$-automorphism $f$ is a \textit{cluster automorphism} of
	$\mathcal{A}(\mathcal{S})$ if there exists a seed $(\textbf{x},B)$ in $\mathcal{S}$ such that
	\begin{enumerate}
		\item $f(\textbf{x})$ is a cluster.
		\item for every $x \in \textbf{x}$ we have $f(\mu_{x, \textbf{x}}(\textbf{x})) = \mu_{f(x),
			f(\textbf{x})}(f(\textbf{x}))$.
	\end{enumerate}
\end{definition}

Cluster automorphisms were originally only defined for skew-symmetric matrices
and hence quivers, but the same definitions and some results apply to
skew-symmetrizable matrices as well. The cluster automorphism groups in this
setting were first studied by Chang and Zhu in~\cite{CZ-exch}
and~\cite{CZ-fintype}. Recall that throughout this paper we assume that the
cluster $\textbf{x}$ of a seed uniquely determines the seed's matrix, and in this case
the matrix is denoted $B(\textbf{x})$.

\begin{lemma}[{\cite[Lemma 2.3]{ASS-auts},\cite[Lemma 2.9]{CZ-exch}}]%
\label{lem:clautmatrix}
	If $f$ is a $\mathbb{K}$-automorphism, then $f$ is a cluster automorphism if and only
	if there exists a seed $(\textbf{x}, B)$ such that $f(\textbf{x})$ is a cluster and
	$B(f(\textbf{x})) = B$ or $-B$.
\end{lemma}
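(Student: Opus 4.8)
The plan is to unwind both directions of the equivalence through the exchange relation for the single mutated cluster variable, using the algebraic independence of a cluster to read the matrix off from the exponents appearing in that relation. The starting point in each case is that $f$ is a field automorphism of $\mathbb{K}$, so it commutes with the rational operations defining mutation; the only nontrivial content is how the positive and negative parts of the columns of $B$ behave.

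For the ($\Leftarrow$) direction I would suppose $B(f(\textbf{x})) = \varepsilon B$ with $\varepsilon \in \lbrace +1,-1\rbrace$ and check condition (2) of Definition~\ref{def:claut} directly. Applying $f$ to the exchange relation gives
\[ f\left(\mu_{\beta_k,\textbf{x}}(\textbf{x})_k\right) = \frac{\prod_{b_{j,k}>0}f(\beta_j)^{b_{j,k}} + \prod_{b_{j,k}<0}f(\beta_j)^{-b_{j,k}}}{f(\beta_k)}. \]
When $\varepsilon = +1$ this is literally $\mu_{f(\beta_k),f(\textbf{x})}(f(\textbf{x}))_k$, since $f(\textbf{x})$ carries the same matrix $B$. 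When $\varepsilon = -1$ the mutation of $f(\textbf{x})$ uses $-B$, which interchanges the positive and negative columns; but this merely swaps the two summands in the numerator above and so leaves the value unchanged. Hence condition (2) holds and $f$ is a cluster automorphism.

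For the ($\Rightarrow$) direction I would set $\tilde{B} = B(f(\textbf{x}))$ and compare the two expressions for the $k$-th mutated variable forced equal by condition (2). Cancelling $f(\beta_k)$ yields the Laurent polynomial identity
\[ \prod_{\tilde{b}_{j,k}>0}f(\beta_j)^{\tilde{b}_{j,k}} + \prod_{\tilde{b}_{j,k}<0}f(\beta_j)^{-\tilde{b}_{j,k}} = \prod_{b_{j,k}>0}f(\beta_j)^{b_{j,k}} + \prod_{b_{j,k}<0}f(\beta_j)^{-b_{j,k}} \]
in the variables $f(\beta_1),\dotsc,f(\beta_n)$. Since $f(\textbf{x})$ is a cluster these variables are algebraically independent, so each side is a genuine sum of two distinct Laurent monomials and the multisets of exponent vectors must agree. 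This forces the $k$-th column of $\tilde{B}$ to equal either the $k$-th column of $B$ or its negative.

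The remaining and most delicate step is to promote this per-column dichotomy to the global statement $\tilde{B} = B$ or $\tilde{B} = -B$; this is where I expect the main obstacle to lie. The key observation is that both $B$ and $\tilde{B}$ are skew-symmetrizable, so whenever $b_{i,j} \neq 0$ the entries $b_{i,j}$ and $b_{j,i}$ carry opposite signs, and likewise for $\tilde{B}$. If column $k$ agreed with $B$ while an adjacent column $l$ (with $b_{k,l}\neq 0$) were negated, then $\tilde{b}_{l,k} = b_{l,k}$ together with $\tilde{b}_{k,l} = -b_{k,l}$ would force $\tilde{b}_{l,k}$ and $\tilde{b}_{k,l}$ to share a sign, contradicting the skew-symmetrizability of $\tilde{B}$. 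Since we assume throughout that the diagram is connected, this sign-consistency propagates across all vertices, so either every column agrees or every column is negated, giving $\tilde{B} = B$ or $\tilde{B} = -B$ as required.
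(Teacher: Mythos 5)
The paper does not actually prove this lemma—it is imported verbatim from \cite[Lemma 2.3]{ASS-auts} and \cite[Lemma 2.9]{CZ-exch}—and your argument is correct and is essentially the proof given in those sources: apply the field automorphism $f$ to the exchange relations, use algebraic independence of the image cluster to match the two exponent vectors in each column (so each column of $B(f(\textbf{x}))$ is $\pm$ the corresponding column of $B$), and propagate the sign choice across the connected diagram using skew-symmetrizability. The only cosmetic imprecision is the claim that each side is a sum of two \emph{distinct} monomials, which fails when a column of $B$ is zero, but the multiset-matching conclusion still holds trivially in that case.
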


The definition of a cluster automorphism only requires that there exists a
single seed such that the image is a seed and the automorphism is compatible
with mutations of that seed, however the compatibility with mutations allows
these properties to be extended to all seeds in the cluster algebra.

\begin{proposition}[{\cite[Prop 2.4]{ASS-auts}}]%
	Let $f$ be a cluster automorphism of a cluster algebra $\mathcal{A}$, then $f$
	satisfies the conditions in Definition~\ref{def:claut} and
	Lemma~\ref{lem:clautmatrix} for every seed in $\mathcal{A}$.
\end{proposition}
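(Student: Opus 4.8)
The plan is to use Lemma~\ref{lem:clautmatrix} to reduce everything to a statement about matrices, and then propagate that statement across the entire mutation class by induction along mutations. By Lemma~\ref{lem:clautmatrix} there is a single seed $(\textbf{x},B)$ for which $f(\textbf{x})$ is a cluster and $B(f(\textbf{x})) = \epsilon B$ with a fixed sign $\epsilon\in\{+1,-1\}$. Since the mutation class $\mathcal{S}$ is connected, every seed is reached from $(\textbf{x},B)$ by a finite chain of local mutations, so I would induct on the length of such a chain, taking $(\textbf{x},B)$ as the base case.

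For the inductive step I would assume a seed $(\textbf{y},B(\textbf{y}))$ satisfies $f(\textbf{y})$ a cluster with $B(f(\textbf{y}))=\epsilon B(\textbf{y})$, and let $\textbf{y}'$ be its mutation at a vertex $k$. The new variable $\beta'_k$ is given by the exchange relation
\[ \beta'_k = \frac{\prod_{b_{j,k}>0}\beta_j^{b_{j,k}} + \prod_{b_{j,k}<0}\beta_j^{-b_{j,k}}}{\beta_k}, \]
and since $f$ is a $\mathbb{K}$-algebra automorphism it commutes with the sums, products and quotients on the right, so $f(\beta'_k)$ satisfies the same relation in the variables $f(\beta_j)$. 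The key point is that the two monomials in this relation are separated using the signs of the $k$-th column of the matrix, yet the relation is symmetric under swapping them; replacing $B(\textbf{y})$ by $-B(\textbf{y})$ only interchanges the two products and so leaves $f(\beta'_k)$ unchanged. Thus $f(\textbf{y}')=\mu_{f(\beta_k),f(\textbf{y})}(f(\textbf{y}))$ regardless of the sign $\epsilon$, which is precisely the second condition of Definition~\ref{def:claut} for the seed $\textbf{y}$, and in particular $f(\textbf{y}')$ is a cluster.

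To finish the inductive step I would track the matrix: as $f(\textbf{y}')$ is the mutation of the seed $f(\textbf{y})$ at vertex $k$, its matrix is $\mu_k(B(f(\textbf{y})))=\mu_k(\epsilon B(\textbf{y}))$, using the standing assumption that a cluster determines its matrix. A short inspection of the mutation rule gives $\mu_k(-B)=-\mu_k(B)$ (the $k$-th row and column change sign on both sides, while in the remaining entries negating $B$ flips both $b_{i,j}$ and the correction term $\tfrac12(|b_{i,k}|b_{k,j}+b_{i,k}|b_{k,j}|)$), so $B(f(\textbf{y}'))=\epsilon\,\mu_k(B(\textbf{y}))=\epsilon B(\textbf{y}')$. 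This closes the induction, yielding $f(\textbf{z})$ a cluster with $B(f(\textbf{z}))=\epsilon B(\textbf{z})$ for every seed $(\textbf{z},B(\textbf{z}))$ of $\mathcal{S}$, whence both conditions of Definition~\ref{def:claut} and the matrix condition of Lemma~\ref{lem:clautmatrix} hold at every seed.

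I expect the only real obstacle to be the sign bookkeeping just described: one must verify that the exchange relation, although written in terms of the signs of the matrix entries, does not actually depend on whether the matrix is $B$ or $-B$, and match this with the identity $\mu_k(-B)=-\mu_k(B)$ for matrix mutation. The connectedness of the mutation class and the homomorphism property of the field automorphism $f$ make the rest of the argument a routine propagation.
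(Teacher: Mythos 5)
Your argument is correct. Note that the paper itself offers no proof of this proposition---it is imported verbatim from \cite[Prop.\ 2.4]{ASS-auts}---so there is nothing internal to compare against; your induction along the connected mutation class, using that the exchange relation $\beta'_k = \bigl(\prod_{b_{j,k}>0}\beta_j^{b_{j,k}} + \prod_{b_{j,k}<0}\beta_j^{-b_{j,k}}\bigr)/\beta_k$ is invariant under $B\mapsto -B$ and that $\mu_k(-B)=-\mu_k(B)$, is exactly the standard argument from the cited source, and your sign bookkeeping checks out.
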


This therefore gives two ways of thinking of cluster automorphisms as either
automorphisms taking clusters to clusters which are compatible with mutations or
as automorphisms which fix exchange matrices (up to multiplication by -1).

\begin{definition}
	A cluster automorphism which fixes exchange matrices is called a
	\textit{direct} cluster automorphism, whereas those which send an exchange
	matrix $B$ to $-B$ are called \textit{inverse} cluster automorphisms.

	Cluster automorphisms form a group, so let
	$\operatorname{Aut}\mathcal{A}$ denote the group of all cluster
	automorphisms of $\mathcal{A}$, and
	$\operatorname{Aut}^{+}\mathcal{A}$ be the subgroup of direct
	cluster automorphisms.
\end{definition}

\begin{proposition}[{\cite[Lemma 2.9, Theorem 2.11]{ASS-auts}}]%
	Let $\mathcal{A}$ be a cluster algebra generated by an exchange matrix $B$. If $B$ is
	mutation-equivalent to $-B$ then $\operatorname{Aut}^+\mathcal{A}$ is a normal subgroup of
	$\operatorname{Aut}\mathcal{A}$ with index 2, otherwise $\operatorname{Aut}^+\mathcal{A} = \operatorname{Aut}\mathcal{A}$.
\end{proposition}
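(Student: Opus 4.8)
The plan is to package the direct/inverse dichotomy into a sign homomorphism $\epsilon \colon \operatorname{Aut}\mathcal{A} \to \{\pm 1\}$ and to read off both statements from its kernel and image. First I would check that $\epsilon$ is well defined. By \cite[Prop 2.4]{ASS-auts} a cluster automorphism $f$ satisfies the conditions of Lemma~\ref{lem:clautmatrix} at \emph{every} seed, so if $B(f(\textbf{x})) = B(\textbf{x})$ for one seed then $B(f(\textbf{y})) = B(\textbf{y})$ for all clusters $\textbf{y}$, and likewise in the $-B$ case; hence the sign is independent of the chosen seed. Since $B \neq 0$ for a connected matrix we have $B \neq -B$, so $f$ cannot be simultaneously direct and inverse, while Lemma~\ref{lem:clautmatrix} guarantees it is one of the two. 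I then set $\epsilon(f) = +1$ if $f$ is direct and $\epsilon(f) = -1$ if $f$ is inverse.

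Next I would verify that $\epsilon$ is a homomorphism. Writing $B(f(\textbf{y})) = \epsilon(f)\,B(\textbf{y})$ for every cluster $\textbf{y}$, for a composite $g \circ f$ I apply this identity to $g$ with the cluster $f(\textbf{x})$:
\[ B\bigl( (g\circ f)(\textbf{x}) \bigr) = B\bigl( g(f(\textbf{x})) \bigr) = \epsilon(g)\, B(f(\textbf{x})) = \epsilon(g)\epsilon(f)\, B, \]
so $\epsilon(g\circ f) = \epsilon(g)\epsilon(f)$. By construction the kernel of $\epsilon$ is precisely the set of direct automorphisms $\operatorname{Aut}^+\mathcal{A}$, and as the kernel of a homomorphism it is automatically a normal subgroup of $\operatorname{Aut}\mathcal{A}$.

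Finally I would determine the image of $\epsilon$, which is either $\{+1\}$ or $\{\pm 1\}$. If $B$ is \emph{not} mutation-equivalent to $-B$, then for any cluster automorphism $f$ the image $f(\textbf{x})$ is again a cluster of $\mathcal{A}$, so $B(f(\textbf{x}))$ lies in the mutation class of $B$; were $\epsilon(f) = -1$ this would force $-B$ into the mutation class of $B$, a contradiction. Hence $\epsilon$ is trivial and $\operatorname{Aut}^+\mathcal{A} = \operatorname{Aut}\mathcal{A}$. If instead $B$ is mutation-equivalent to $-B$, there is a seed $(\textbf{y}, -B)$ in $\mathcal{S}$; writing $\textbf{x} = (\beta_1,\dotsc,\beta_n)$ and $\textbf{y} = (\gamma_1,\dotsc,\gamma_n)$ and using that a cluster is a transcendence basis generating $\mathbb{K}$ as a field over $\mathbb{C}$, the assignment $\beta_i \mapsto \gamma_i$ extends to a $\mathbb{K}$-automorphism $f$ (invertible since $\textbf{y}$ is also a generating set), and $B(f(\textbf{x})) = B(\textbf{y}) = -B$ makes $f$ an inverse cluster automorphism by Lemma~\ref{lem:clautmatrix}. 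Thus $\epsilon$ is surjective and the first isomorphism theorem gives index $2$. The main obstacle is this last construction: one must confirm that the ordering of $\textbf{y}$ can be chosen so that $\beta_i \mapsto \gamma_i$ is compatible with the matrix identity $B(\textbf{y}) = -B$, and that the resulting field map is genuinely invertible rather than merely a homomorphism.
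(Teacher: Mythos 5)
The paper does not actually prove this proposition --- it is imported directly from \cite{ASS-auts} (Lemma 2.9 and Theorem 2.11) with no in-paper argument --- so there is nothing internal to compare against; your sign-homomorphism proof is correct and is essentially the standard argument. Two points are worth tightening. First, well-definedness of $\epsilon$ needs slightly more than the quoted Proposition: that result only says that at \emph{every} seed the matrix is sent to itself or its negative, not that the sign agrees across seeds. The missing step is that $f$ commutes with mutation, so $B(f(\mu_k\textbf{x})) = \mu_k\bigl(B(f(\textbf{x}))\bigr)$, and since $\mu_k(-B) = -\mu_k(B)$ the sign is locally constant along edges of the (connected) exchange graph, hence globally constant; this propagation is precisely the content of Lemma 2.9 in \cite{ASS-auts}. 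Second, the obstacle you flag at the end is genuine but resolvable exactly as you suspect: in the labelled-seed formalism, ``$B$ mutation-equivalent to $-B$'' means some labelled seed in $\mathcal{S}^0$ carries the matrix $-B$ on the nose (permutations are part of the global mutation group), so $\textbf{y}$ can be ordered with $B(\textbf{y}) = -B$, and the field map $\beta_i \mapsto \gamma_i$ is an automorphism because any cluster generates $\mathbb{K}$ as a field (mutations are birational) and a field endomorphism of $\mathbb{K}$ is automatically injective; Lemma~\ref{lem:clautmatrix} then certifies $f$ as an inverse cluster automorphism. The one degenerate case your argument (and the statement itself) silently excludes is rank one, where $B = 0 = -B$ and the direct/inverse dichotomy collapses.
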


Cluster automorphisms arise naturally in the labelled seed and global mutation
setting introduced by King and Pressland, with the following correspondence:

\begin{theorem}[{\cite[Corollary 6.3]{KP-auts}}]\label{thm:mutfinquiver}
	If $\mathcal{S}$ is the mutation class of a seed $(\textbf{x},Q)$ where $Q$ is a skew-symmetric
	mutation-finite matrix then
	\[ \operatorname{Aut}_{M_n}(\mathcal{S}^0) \cong \operatorname{Aut}\mathcal{A}(\mathcal{S}). \]%
\end{theorem}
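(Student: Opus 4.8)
The plan is to build mutually inverse homomorphisms between $\operatorname{Aut}\mathcal{A}(\mathcal{S})$ and $\operatorname{Aut}_{M_n}(\mathcal{S}^0)$ directly; one could equally use Theorem~\ref{thm:mnexch} to replace $\operatorname{Aut}_{M_n}(\mathcal{S}^0)$ by $\operatorname{Aut}\mathcal{E}(\mathcal{S})$ and work on the exchange graph, which makes the key step more geometric. I would treat the inclusion $\operatorname{Aut}\mathcal{A}(\mathcal{S})\hookrightarrow\operatorname{Aut}_{M_n}(\mathcal{S}^0)$ as the easy half and the surjectivity as the substance.

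For the easy half, take a cluster automorphism $f$. Since a cluster automorphism satisfies the defining conditions at every seed (\cite[Prop 2.4]{ASS-auts}), I can define $\hat f$ on a labelled seed $(\textbf{x}(s),B(s))$ with $\textbf{x}(s)=(\beta_1,\dotsc,\beta_n)$ by $\hat f(s)=(f(\beta_1),\dotsc,f(\beta_n))$, whose determined matrix is $B(f(\textbf{x}(s)))=\pm B(s)$ by Lemma~\ref{lem:clautmatrix}; this lands in $\mathcal{S}^0$ because $f$ carries clusters to clusters of the same mutation class. I would then check $\hat f$ commutes with every $\sigma\in\operatorname{Sym}(n)$ (immediate, as $f$ acts on variables and $\sigma$ on positions) and with every $\mu_k$. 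The latter is condition~(2) of Definition~\ref{def:claut}, together with the two observations that the exchange relation $\beta_k'\beta_k=\prod_{b_{j,k}>0}\beta_j^{b_{j,k}}+\prod_{b_{j,k}<0}\beta_j^{-b_{j,k}}$ is unchanged under $B\mapsto -B$ and that $\mu_k(-B)=-\mu_k(B)$, so the inverse-type case causes no trouble. The map $f\mapsto\hat f$ is a homomorphism and is injective because $\hat f$ records $f(x_i)$ explicitly on the initial labelled seed, and a $\mathbb{K}$-automorphism is determined by its values on a cluster.

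For surjectivity, take $\phi\in\operatorname{Aut}_{M_n}(\mathcal{S}^0)$, fix the initial labelled seed $u=(\textbf{x},Q)$ with $\textbf{x}=(x_1,\dotsc,x_n)$, and write $\phi(u)=(\textbf{y},Q')$ with $\textbf{y}=(y_1,\dotsc,y_n)$. Because $\textbf{y}$ is a cluster its entries are algebraically independent, so $x_i\mapsto y_i$ extends to a field automorphism $f$ of $\mathbb{K}$. By Lemma~\ref{lem:clautmatrix}, to conclude $f\in\operatorname{Aut}\mathcal{A}(\mathcal{S})$ it suffices to prove $Q'=\pm Q$; granting this, an induction over the exchange graph using the exchange relations shows that $f$ realizes $\phi$ (so $\hat f=\phi$), since the mutated variable produced by an exchange relation depends on the matrix only up to sign. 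Thus the whole argument reduces to the single claim $Q'=B(\phi(u))=\pm Q$.

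Establishing $Q'=\pm Q$ is the main obstacle, and it is precisely where mutation-finiteness and skew-symmetry are needed. Since $\phi$ commutes with $M_n$ and the action on $\mathcal{S}^0$ is transitive, $\phi$ is determined by $\phi(u)$ and forces $\operatorname{Stab}_{M_n}(u)=\operatorname{Stab}_{M_n}(\phi(u))$, so I must show that two seeds with equal $M_n$-stabilizers have matrices agreeing up to a global sign. The stabilizer records the orders of the rank-two return relations through each pair $\{i,j\}$, equivalently the lengths of the chordless cycles through the corresponding edges of the exchange graph (a square, a pentagon, or an infinite cycle), and for a skew-symmetric matrix these lengths determine each $\lvert b_{i,j}\rvert$; the rank-three return relations then constrain the orientations, pinning the quiver down up to reversing it entirely, which is the delicate point. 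The crucial role of mutation-finiteness is that it bounds the arrow multiplicities so that this relation data actually recovers the entries $b_{i,j}$: in the mutation-infinite case distinct multiplicities $\ge 2$ produce the same infinite-order relation and cannot be distinguished, which is exactly why the statement fails for general skew-symmetric matrices and why a marking of the exchange graph must later be introduced. Once $Q'=\pm Q$ is secured, $f$ is a cluster automorphism, the assignments $f\mapsto\hat f$ and $\phi\mapsto f$ are mutually inverse homomorphisms, and together with Theorem~\ref{thm:mnexch} this gives
\[ \operatorname{Aut}_{M_n}(\mathcal{S}^0)\cong\operatorname{Aut}\mathcal{E}(\mathcal{S})\cong\operatorname{Aut}\mathcal{A}(\mathcal{S}). \]
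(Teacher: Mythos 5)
The paper does not actually prove this statement: it is imported verbatim as \cite[Corollary~6.3]{KP-auts}, so there is no internal proof to match your argument against. That said, your reduction is the right one, and the machinery you invoke at the crux is exactly what the paper builds in Section~\ref{sec:exchauts} for the harder skew-symmetrizable case: your ``rank-two return relations'' are the geodesic loops of Lemma~\ref{lem:fixfrozexch} and Lemma~\ref{lem:NdefinesB} (loop length $4,5,\infty$ giving $0,1,2$ arrows for a mutation-finite quiver on $\geq 3$ vertices), and your ``rank-three relations constrain the orientations'' is Lemma~\ref{lem:3vert-N} plus Proposition~\ref{prop:exchdiagram}. For skew-symmetric $Q$ the diagram determines the matrix, so ``$R$ or $R^{\textrm{op}}$'' gives $Q'=\pm Q$ with no marking needed, and your two easy halves (entrywise action of $f$ on labelled seeds, equivariance at $u$ plus transitivity forcing $\hat f=\phi$) are sound. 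Two small points you gloss over: surjectivity of the field endomorphism $x_i\mapsto y_i$ needs the Laurent phenomenon (each $x_i$ lies in $\mathbb{C}(y_1,\dotsc,y_n)$), and the $n=2$ case must be handled separately since loop lengths cannot distinguish multiplicities $\geq 2$ there (trivially fine, as the mutation class is $\{Q,Q^{\textrm{op}}\}$).

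The one genuine gap is that the step you yourself flag as ``the delicate point'' is asserted rather than proved. Showing that the $N^0,N^1$ data pins down the orientation of a mutation-finite quiver up to a single global reversal is not formal: it requires enumerating all mutation-finite $3$-vertex quivers and checking that no two with the same underlying unoriented diagram but inequivalent orientations share the same distance-one loop data (this is the content of the paper's Tables~\ref{tab:symmetricN}--\ref{tab:symmetrizN}), and then an overlapping-triples connectedness argument to ensure the reversal choice is consistent across the whole quiver rather than triple-by-triple (Proposition~\ref{prop:exchdiagram}). Without that finite verification the claim ``the stabilizer determines $Q$ up to sign'' is unsupported, and it is precisely the place where the theorem could conceivably fail. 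So: correct architecture, correct identification of where mutation-finiteness enters, but the load-bearing case analysis is missing.
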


Combining Theorem~\ref{thm:mutfinquiver} with Theorem~\ref{thm:mnexch} gives the
following:

\begin{corollary}\label{cor:mfquiv-exchaut}
	For a cluster algebra $\mathcal{A}$ constructed from a mutation-finite quiver with
	exchange graph $\mathcal{E}_\mathcal{A}$
	\[\operatorname{Aut}\mathcal{E}_\mathcal{A} \cong \operatorname{Aut}\mathcal{A}.\]%
\end{corollary}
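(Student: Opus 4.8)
The plan is to compose the two isomorphisms already in hand, since the corollary is exactly the statement obtained by chaining Theorem~\ref{thm:mutfinquiver} and Theorem~\ref{thm:mnexch} through their common middle term $\operatorname{Aut}_{M_n}(\mathcal{S}^0)$. So the proof is essentially bookkeeping: there is no new combinatorial content, only the task of lining up hypotheses and notation.

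First I would fix the data. By the one-to-one correspondence between quivers and skew-symmetric matrices recalled in Section~\ref{sec:mut}, a mutation-finite quiver determines a skew-symmetric mutation-finite matrix $Q$, hence a seed $(\textbf{x},Q)$. Let $\mathcal{S}^0$ be the labelled mutation class of this seed, $\mathcal{S} = \mathcal{S}^0/\operatorname{Sym}(n)$ its mutation class, $\mathcal{A} = \mathcal{A}(\mathcal{S})$ the associated cluster algebra, and $\mathcal{E}_\mathcal{A} = \mathcal{E}(\mathcal{S})$ its exchange graph. With this notation the hypotheses of both theorems are met: $Q$ is skew-symmetric and mutation-finite, as required to invoke Theorem~\ref{thm:mutfinquiver}, while Theorem~\ref{thm:mnexch} applies to any labelled mutation class and so needs no finiteness assumption.

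Then I would simply compose the two isomorphisms:
\[ \operatorname{Aut}\mathcal{E}_\mathcal{A} = \operatorname{Aut}\mathcal{E}(\mathcal{S}) \cong \operatorname{Aut}_{M_n}(\mathcal{S}^0) \cong \operatorname{Aut}\mathcal{A}(\mathcal{S}) = \operatorname{Aut}\mathcal{A}, \]
where the first isomorphism is Theorem~\ref{thm:mnexch} and the second is Theorem~\ref{thm:mutfinquiver}. A composite of group isomorphisms is again a group isomorphism, giving $\operatorname{Aut}\mathcal{E}_\mathcal{A} \cong \operatorname{Aut}\mathcal{A}$.

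There is no genuine obstacle here beyond matching up the statements: the mathematical weight lies entirely in the two theorems being combined. The one point that deserves a moment's care is checking that the phrase \emph{mutation-finite quiver} translates precisely into the \emph{skew-symmetric mutation-finite matrix} requirement of Theorem~\ref{thm:mutfinquiver} — which it does, again by the quiver--matrix correspondence — so that the finiteness hypothesis is invoked at exactly the step that demands it.
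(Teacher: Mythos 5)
Your proposal is correct and matches the paper exactly: the corollary is stated there as an immediate consequence of combining Theorem~\ref{thm:mutfinquiver} with Theorem~\ref{thm:mnexch} through the common term $\operatorname{Aut}_{M_n}(\mathcal{S}^0)$, which is precisely your composition. The care you take in checking that a mutation-finite quiver yields a skew-symmetric mutation-finite matrix is sensible bookkeeping but adds nothing beyond what the paper implicitly assumes.
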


Chang and Zhu provide an alternative proof of this in~\cite{CZ-exch} and extend
the result to certain finite type skew-symmetrizable matrices:

\begin{theorem}[{\cite[Theorem 3.7]{CZ-exch}}]\label{thm:bncn}
	If $\mathcal{S}$ is the mutation class of a seed $(\textbf{x},B)$ where $B$ is a
	skew-symmetrizable matrix of Dynkin type $B_n$ or $C_n$ for $n \geq 3$ then
	\[ \operatorname{Aut}\mathcal{A}(\mathcal{S}) = \operatorname{Aut}\mathcal{E}_\mathcal{A}(\mathcal{S}). \]%
\end{theorem}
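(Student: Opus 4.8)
The plan is to prove the two inclusions $\operatorname{Aut}\mathcal{A}(\mathcal{S}) \subseteq \operatorname{Aut}\mathcal{E}_\mathcal{A}(\mathcal{S})$ and $\operatorname{Aut}\mathcal{E}_\mathcal{A}(\mathcal{S}) \subseteq \operatorname{Aut}\mathcal{A}(\mathcal{S})$ separately, identifying a cluster automorphism $f$ with the exchange graph transformation $[\textbf{x}] \mapsto [f(\textbf{x})]$ it induces. The first inclusion is the routine direction and holds for any skew-symmetrizable type: a cluster automorphism sends clusters to clusters and, being compatible with mutation at every seed, carries a mutation edge $[u]-[v]$ to the edge $[f(u)]-[f(v)]$, so it permutes the vertices of $\mathcal{E}_\mathcal{A}$ preserving adjacency. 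The assignment is injective, since if $f$ induces the identity on $\mathcal{E}_\mathcal{A}$ it fixes every cluster setwise, and comparing the initial cluster $\textbf{x}$ with each of its neighbours $s_i$ gives $\{f(x_i)\} = f(\textbf{x}\setminus s_i) = \textbf{x}\setminus s_i = \{x_i\}$, forcing $f$ to fix every initial variable and hence $f=\mathrm{id}$. All the real content is in the reverse inclusion, and this is where the hypotheses $B_n$ or $C_n$ and $n\geq 3$ enter.

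For the reverse inclusion I would start from $\psi \in \operatorname{Aut}\mathcal{E}_\mathcal{A}(\mathcal{S})$ and use Theorem~\ref{thm:mnexch} to obtain the corresponding $\phi \in \operatorname{Aut}_{M_n}(\mathcal{S}^0)$. Fixing a base labelled seed $(\textbf{x},B)$, write $(\textbf{y},B') = \phi(\textbf{x},B)$; since the cluster $\textbf{x}$ is a free generating set of $\mathbb{K} = \mathbb{C}(x_1,\dotsc,x_n)$, there is a unique $\mathbb{K}$-automorphism $f$ with $f(x_i) = y_i$. By Lemma~\ref{lem:clautmatrix}, this $f$ is a cluster automorphism, and tracing the identifications it induces $\psi$, exactly when $B' = B$ or $B' = -B$. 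The theorem therefore reduces to the purely combinatorial assertion that in types $B_n$ and $C_n$ with $n\geq 3$ every $\phi \in \operatorname{Aut}_{M_n}(\mathcal{S}^0)$ sends $B$ to $\pm B$.

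The main obstacle is establishing this matrix rigidity, and the essential input is that $B_n$ and $C_n$ are distinct mutation classes once $n\geq 3$. Because $\phi$ commutes with mutation, $B' = B(\phi(\textbf{x}))$ lies in the same mutation class as $B$, and $\psi$ constrains how the local mutation structure around $[\textbf{x}]$ is carried to that around $[\textbf{y}]$. I would exploit the finite-type bijection between cluster variables and almost positive roots to realise $\psi$ as a symmetry of the corresponding generalised associahedron, and argue that such a symmetry can only act on the seed matrix as $B$, $-B$, $B^T$ or $-B^T$: the signs come from the inverse automorphism $w_0 = -1$ available in types $B_n$ and $C_n$, and the transposes would realise the opposite Langlands type. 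For $n\geq 3$ the matrices $\pm B^T$ lie in the $C_n$ (resp.\ $B_n$) class, which is distinct from that of $B$, so they cannot occur as $B(\phi(\textbf{x}))$; this leaves $B' = \pm B$, as required. Making the clause ``such a symmetry acts as one of these four'' fully rigorous — essentially computing $\operatorname{Aut}\mathcal{E}_\mathcal{A}$ and matching its generators with direct and inverse cluster automorphisms — is the step I expect to require the most care.

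Finally I would record why the rank bound is sharp: for $B_2 = C_2$ the two classes coincide, the transpose stays inside the class, and the order-$6$ rotation of the hexagonal exchange graph in Example~\ref{ex:b2-exchaut} sends $B$ to $-B^T$ rather than to $\pm B$, giving an exchange graph automorphism that is not a cluster automorphism. An alternative route to the reverse inclusion, which I would note but not pursue in detail, is to pass to the skew-symmetric unfoldings of $C_n$ and $B_n$ (of types $A_{2n-1}$ and $D_{n+1}$ respectively) and transport the equality $\operatorname{Aut}\mathcal{E}_\mathcal{A} \cong \operatorname{Aut}\mathcal{A}$ from the mutation-finite quiver case of Corollary~\ref{cor:mfquiv-exchaut}.
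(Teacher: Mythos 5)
You should first be aware that the paper does not prove this statement at all: it is imported verbatim as Theorem 3.7 of Chang and Zhu \cite{CZ-exch}, so there is no internal proof to measure your argument against. Judged on its own terms, your outline has the right shape. The forward inclusion and the reduction of the reverse inclusion to the matrix rigidity claim ``every exchange graph automorphism sends $B$ to $\pm B$'' (via Theorem~\ref{thm:mnexch} and Lemma~\ref{lem:clautmatrix}) are both correct, and you have put your finger on exactly the right special feature of types $B_n$ and $C_n$ with $n\geq 3$, namely that the two Langlands-dual matrices realising the same diagram lie in distinct mutation classes, so the transposed possibilities are excluded because $B(\phi(\textbf{x}))$ must be mutation-equivalent to $B$.

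The one genuine gap is the step you yourself flag: the claim that an exchange graph automorphism can only send $B$ to one of $B$, $-B$, $B^T$, $-B^T$. As written, the appeal to symmetries of the generalised associahedron and to $w_0=-1$ is not an argument — you would still have to rule out, a priori, an exchange graph automorphism whose effect on the seed matrix is something else entirely. The cheap way to close it is purely combinatorial and is essentially what this paper develops later for the general mutation-finite skew-symmetrizable case: by Lemma~\ref{lem:unoriented} and Proposition~\ref{prop:exchdiagram}, any exchange graph automorphism carries the diagram $R$ of a seed to $R$ or $R^{\textrm{op}}$ (this uses geodesic loops, i.e.\ the lengths of the $2$- and $3$-vertex subgraph polygons, and needs $n\geq 3$ to see orientations). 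The $B_n/C_n$ diagram has exactly one edge of weight $2$, and a weight-$2$ edge is realised by exactly the two entry pairs $(1,-2)$ and $(2,-1)$; hence the only matrices with diagram $R$ or $R^{\textrm{op}}$ are $\pm B$ and $\pm B^T$, and your mutation-class argument finishes the job. With that substitution your proof is complete and is, in spirit, the specialisation of the paper's Section~5 argument (where the marking is introduced precisely because for other skew-symmetrizable types the weight-$2$ vertices are not intrinsically distinguishable and the transpose \emph{is} mutation-equivalent to $B$, as in your $B_2$ example). Your suggested alternative via unfoldings and Corollary~\ref{cor:mfquiv-exchaut} would need more care, since Theorem~\ref{thm:autsintounf} only gives an injection of automorphism groups, not a way to descend automorphisms from $\mathcal{E}(A_{2n-1})$ or $\mathcal{E}(D_{n+1})$ back to the folded exchange graph.
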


\subsection{Examples: Maximal green sequences}

Maximal green sequences are certain sequences of mutations of a given quiver.
First studied by Keller in~\cite{K-dilog} in relation to quantum dilogarithms
they have subsequently been used to study BPS states in theoretical physics
(see for example~\cite{CCC-bwm}).

In their paper on maximal green sequences, Br\"ustle, Dupont and P\'erotin
proved that any maximal green sequence for some quiver $Q$ takes it to a quiver
which is isomorphic to $Q$~\cite[Proposition 2.10]{BDP-green}. Hence this
sequence of mutations will give an element of the mutation group $\mu_{i_1}
\cdot \dotsc\cdot \mu_{i_k} \in M_n$ which takes a cluster to a cluster and a
quiver $Q$ to an isomorphic quiver $Q'$, with some permutation $\sigma$ which
acts on the vertices of $Q'$ to give $Q$. Then $\mu_{i_1} \cdot \dotsc\cdot
\mu_{i_k} \cdot \sigma \in M_n$ fixes the quiver and therefore induces a cluster
automorphism.

\begin{definition}[{\cite[Definition 2.4]{BDP-green}}]%
	Given a quiver $Q$, its \textit{framed quiver} $\hat{Q}$ (respectively
	\textit{coframed} quiver $\check{Q}$) is the quiver constructed from $Q$ by
	adding an additional vertex $\hat{i}$ and an additional arrow $i \to \hat{i}$
	(resp.\ $\hat{i} \to i$) for each vertex $i$ of $Q$.
\end{definition}

These additional vertices are considered frozen vertices of the (co)framed
quiver. For a quiver $Q$ call this set of frozen vertices of the quiver $Q^F_0$.

\begin{definition}[{\cite[Definition 2.5]{BDP-green}}]%
	Given a quiver $Q$ with framed quiver $\hat{Q}$, a non-frozen vertex $i$ of a
	quiver $R$ in the mutation class of $\hat{Q}$ is called \textit{green}
	(resp.\ \textit{red}) if for each $j \in R^F_0$ there is no arrow $j \to i$
	(resp.\ no arrow $i \to j$) in the quiver $R$.
\end{definition}

Every (non-frozen) vertex in a quiver of the mutation class of $\hat{Q}$ is
either green or red~\cite[Theorem 2.6]{BDP-green}. A maximal green sequence is
then a sequence of mutations at green vertices which continues until every
non-frozen vertex is red.

\begin{example}
The quiver of type $A_2$ has framed and coframed quivers as shown in
Figure~\ref{fig:framed-a2}. This quiver has two maximal green sequences given by
$\mu_1 \cdot \mu_2$ and $\mu_2 \cdot \mu_1 \cdot \mu_2$ which are illustrated in
Figure~\ref{fig:green-seq-a2}.

\begin{figure}
	\centering%
	\includegraphics{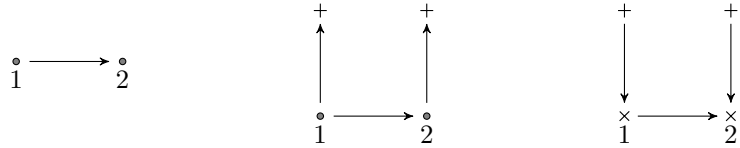}
	\caption{The quiver of type $A_2$ (left), with its framed quiver (center) and
		coframed quiver (right). Green vertices are shown as circles, red vertices
	as crosses and frozen vertices as plusses.}
	\label{fig:framed-a2}
\end{figure}
\begin{figure}
	\centering%
	\includegraphics{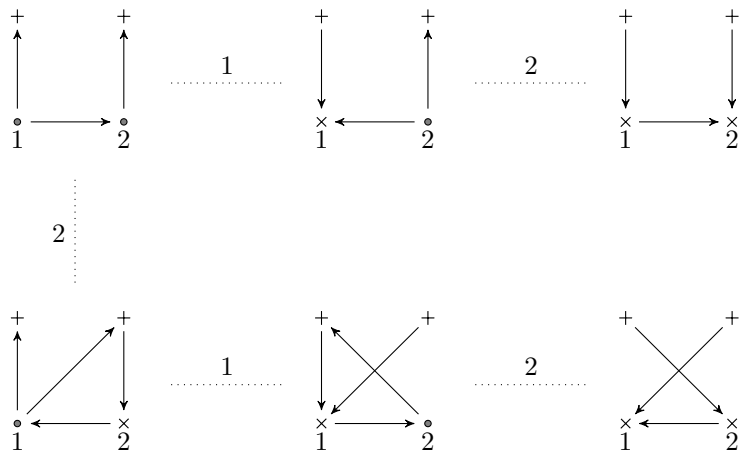}
	\caption{The two maximal green sequences of the quiver of type $A_2$ starting
		with its framed quiver. The top green sequence is $\mu_1 \cdot \mu_2$ and
		the bottom is $\mu_2 \cdot \mu_1 \cdot \mu_1$. The two resulting quivers are
		both isomorphic to the coframed quiver of the quiver of type $A_2$. Green
		vertices are shown as circles, red vertices as crosses and frozen vertices as
		plusses.}
	\label{fig:green-seq-a2}
\end{figure}

If the initial labelled seed is $(Q, \textbf{x})$ with cluster $\textbf{x} = (x, y)$, then the
resulting cluster after these green sequences induces a cluster automorphism as
shown below. The sequence $\mu_2 \cdot \mu_1 \cdot \mu_2$ does not give the same
quiver, but after the permutation $(12)$ it does:
\[ \big(Q, (x,y) \big)\cdot \mu_1 \cdot \mu_2 = \left(Q,
\left(\frac{1+y}{x},\frac{1+x+y}{xy}\right)\right) = \big(Q, (x,y) \big)\cdot
\mu_2 \cdot \mu_1 \cdot \mu_2 \cdot (12). \]%
These both give the same cluster automorphism $x \mapsto \frac{1+y}{x}$ and $y
\mapsto \frac{1+x+y}{xy}$.
\end{example}

\section{Generalising automorphisms to skew-symmetrizable case}
\label{sec:exchauts}

Theorems~\ref{thm:mutfinquiver} and~\ref{thm:bncn} show that cluster
automorphisms are linked to the automorphisms of the exchange graph for
mutation-finite skew-symmetric matrices as well as a specific family of
skew-symmetrizable matrices. However, in general the exchange graph automorphism
group for any mutation-finite skew-symmetrizable matrix is larger than the
cluster automorphism group.

An example of this would be the exchange graph automorphism of the mutation
class of $B_2$ considered in Example~\ref{ex:b2-exchaut}. This graph
automorphism does not correspond to a cluster automorphism as the initial matrix
$B$ is sent to $-B^T \neq \pm B$.

In this section we aim to generalise the results of the previous section to the
skew-symmetrizable case. To do this we introduce additional structure on the
exchange graph, which defines a marked exchange graph. This extra structure
ensures that any graph automorphism fixing this structure corresponds to a
cluster automorphism.  In this way the study of cluster automorphisms can be
reduced to the combinatorial study of graph automorphisms.

\subsection{Marked exchange graph}\label{sec:mexch}

Let $B$ be a skew-symmetrizable matrix, with symmetrizing matrix $D$. If
$\mu_i$ is any mutation, then $D$ is also the symmetrizing matrix for $B \cdot
\mu_i$. Similarly for any permutation $\sigma$ the permuted matrix $D \cdot \sigma =
\operatorname{diag}\left( d^\sigma_i \right) = \operatorname{diag}\left( d_{\sigma^{-1}(i)} \right)$ is the symmetrizing matrix for $B
\cdot \sigma = B^\sigma$.

\begin{definition}
	The \textit{marked labelled exchange graph} of a mutation class generated by
	$u = (\textbf{x},B)$ where $B$ is a skew-symmetrizable matrix with symmetrizing matrix
	$D = \operatorname{diag}\left( d_i \right)$ is the labelled exchange graph with an additional marking on
	each edge. Each edge corresponds to a global mutation $\mu_i$ for some $i$, so
	mark that edge with the symmetrizing entry $d_i$.

	If a permutation $\sigma$ acts on $u$ to give a labelled seed in a different
	component of $\Delta(\mathcal{S}^0)$, then mark the $i$-th edges with $d^\sigma_{i}$,
	where $D \cdot \sigma = \operatorname{diag}\left( d^\sigma_i \right)$.
\end{definition}

In the exchange graph $\mathcal{E}$ each edge no longer corresponds to a global
mutation $\mu_i$, but rather to a local mutation $\mu_{\beta,\textbf{x}}$ at a specific
cluster variable $\beta$ in a cluster $\textbf{x}$.

For a permutation $\sigma$ and permuted seed $(\textbf{x}, B) \cdot \sigma =
(\textbf{x}^\sigma, B^\sigma)$, then the edge $\mu_{\sigma(i)}$ adjacent to this seed
corresponds to the local mutation $\mu_{\beta^\sigma_{\sigma(i)}, [\textbf{x}^\sigma]}
= \mu_{\beta_i, [\textbf{x}]}$ as $\beta^\sigma_{\sigma(i)} =
\beta_{\sigma^{-1}(\sigma(i))} = \beta_i$ and $[\textbf{x}^\sigma] = [\textbf{x}]$. This edge
$\mu_{\sigma(i)}$ is marked with $d^\sigma_{\sigma(i)} =
d_{\sigma^{-1}(\sigma(i))} = d_i$ and hence in the quotient the edge
$\mu_{\beta_i, \textbf{x}}$ has a consistent marking, so the following is well-defined.

\begin{definition}
	Let $\widehat{\mathcal{E}}(\mathcal{S})$ be the \textit{marked exchange graph} of a mutation class
	$\mathcal{S}$ given by taking the quotient of the marked labelled exchange graph with
	respect to the symmetric group action.
	
	Alternatively let $B$ be a skew-symmetrizable matrix, with symmetrizing matrix
	$D$ and let $R$ be the diagram corresponding to $B$ so each row in $B$
	represents a vertex in $R$. Each diagonal entry in $D$ can be thought of as
	being attached to that row's vertex of $R$, and the edge in $\widehat{\mathcal{E}}$
	representing mutation in that vertex should be marked with this diagonal
	entry.
\end{definition}

\begin{figure}
	\centering%
	\includegraphics{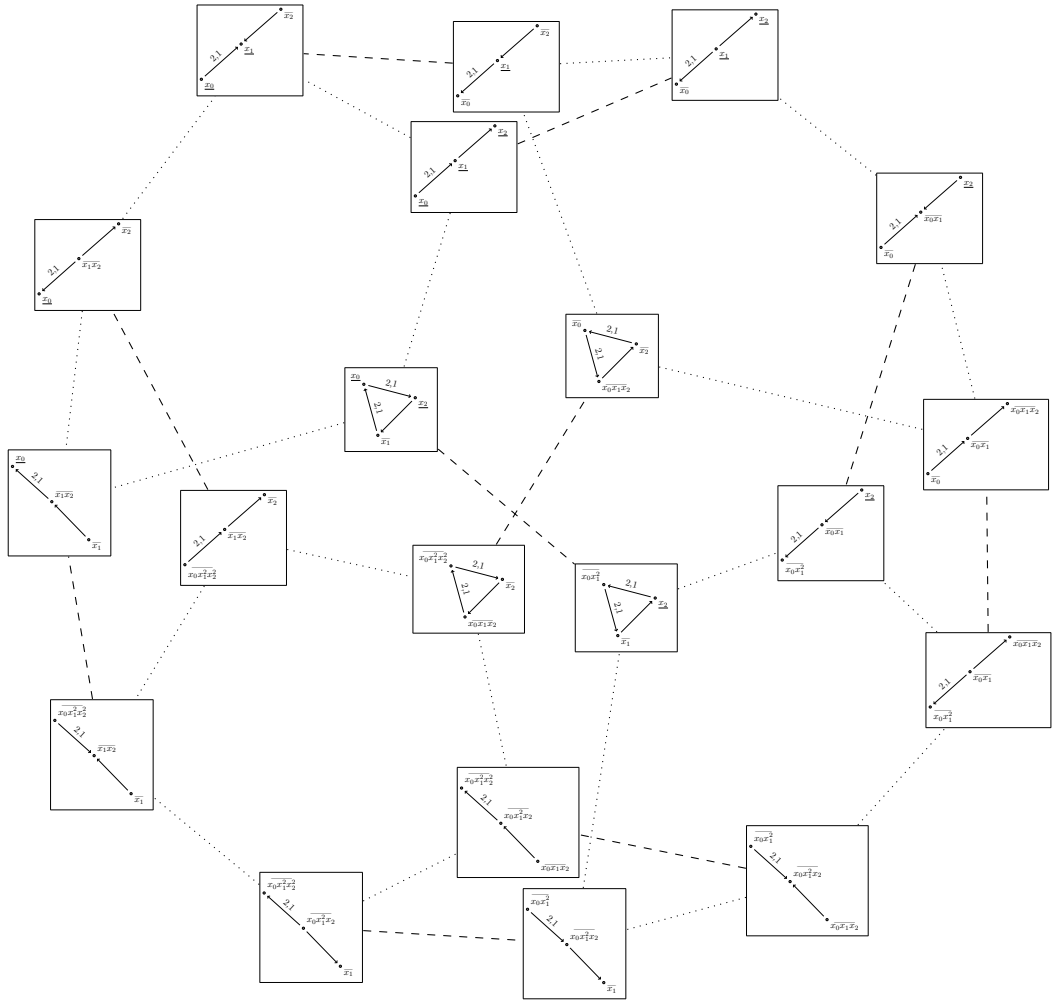}
	\caption{Marked exchange graph of type $B_3$. Dotted edges correspond to a
		symmetrizing entry of $1$, while dashed edges correspond to $2$. Only
		denominators are shown in the cluster variables with a bar above each, unless
		the cluster variable is one of $x_1, x_2$ or $x_3$ where the variable is shown
		with a bar underneath.}
	\label{fig:b3-marked}
\end{figure}

\begin{example}[$B_3$]%
	The marked exchange graph of the cluster algebra of type $B_3$ is shown in
	Figure~\ref{fig:b3-marked}. The cluster variables are not written out
	in full, rather only the denominators are shown with a bar above except for
	the initial cluster variables $x_1$, $x_2$ and $x_3$ which are shown with a bar
	underneath. Each vertex is adjacent to two dotted edges and one dashed edge.

	Choosing a matrix in the mutation class, the symmetrizing matrix is
	$\operatorname{diag}\left( 2,1,1 \right)$:
	\[\begin{pmatrix} 0 & 2 & 0 \\ -1 & 0 & 1 \\ 0 & -1 & 0
\end{pmatrix} \begin{pmatrix} 2 & 0 & 0 \\ 0 & 1 & 0 \\ 0 & 0 & 1 \end{pmatrix}
	= \begin{pmatrix} 0 & 2 & 0 \\ -2 & 0 & 1 \\ 0 & -1 & 0 \end{pmatrix}.\]%
	The dotted edges correspond to mutations in the vertices with symmetrizing
	entry $1$, while the dashed edge corresponds to the mutation in the vertex
	with symmetrizing entry $2$.

	In this case, any automorphism of the unmarked exchange graph sends dashed
	edges to dashed edges, so automatically preserves the markings and hence
	$\operatorname{Aut}\mathcal{E} = \operatorname{Aut}\widehat{\mathcal{E}}$.
\end{example}

\begin{figure}
	\centering%
	\includegraphics{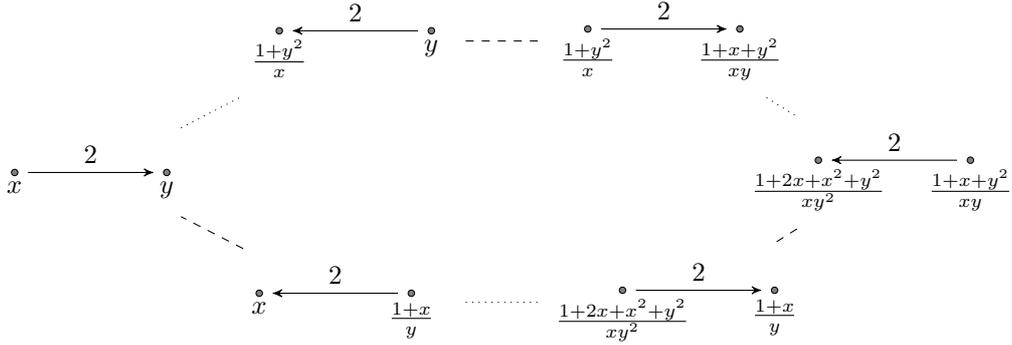}
	\caption{Marked exchange graph for cluster algebra of type $B_2$. Dotted edges
	correspond to mutations in a vertex with symmetrizer $1$ while dashed edges
	correspond to symmetrizer $2$.}
	\label{fig:b2-marked}
\end{figure}

\begin{example}[$B_2$]%
	The marked exchange graph of the cluster algebra of type $B_2$ is shown in
	Figure~\ref{fig:b2-marked}, where dotted edges correspond to mutations in
	vertices with symmetrizer $1$ and dashed edges correspond to symmetrizer $2$.
	The initial matrix for the cluster $[x,y]$ was chosen to be
	$\left(\begin{smallmatrix} 0 & 1 \\ -2 & 0 \end{smallmatrix}\right)$ with
	symmetrizing matrix $\operatorname{diag}\left( 1,2 \right)$.

	The automorphism considered in Example~\ref{ex:b2-exchaut}, given by a rotation
	of angle $\frac{\pi}{3}$, does not fix the markings in the graph, so is not an
	automorphism of the marked graph.
\end{example}

\begin{remark}
	For any skew-symmetric matrix the symmetrizing matrix is the identity, so all
	markings would be the same and $\mathcal{E}_\mathcal{A} = \widehat{\mathcal{E}}_\mathcal{A}$.
\end{remark}

\begin{remark}
	For a cluster algebra of Dynkin type $B_n$ or $C_n$, for $n \geq 3$, the
	marking on the exchange graph does not limit the number of automorphisms, so
	$\operatorname{Aut}\mathcal{E}_\mathcal{A} = \operatorname{Aut}\widehat{\mathcal{E}}_\mathcal{A}$. This follows from Theorem {3.7} in Chang
	and Zhu's paper~\cite{CZ-exch} linking exchange graph automorphisms and
	cluster automorphisms.
\end{remark}

\subsection{Geodesic loops}

\begin{definition}[{\cite[Def. 2.25]{CZ-rooted}}]%
	Let $\mathcal{E}$ be an exchange graph of a seed $u = (x,B)$ with vertices labelled
	$\left(v_i\right)_{i \in \lbrace 1,\dotsc,n\rbrace}$. For a subset of vertices
	$\lbrace v_k \rbrace$ the \textit{frozenisation} of $u$ with respect to
	$\lbrace v_k \rbrace$ is the mutation class constructed by freezing all
	vertices in $\lbrace v_k \rbrace$.

	It is often more convenient to consider the \textit{cofrozenisation} of $u$
	with respect to $\lbrace v_k \rbrace$, denoted $u \backslash \lbrace v_k
	\rbrace$, which is constructed by freezing all vertices in $u$ except those
	in $\lbrace v_k \rbrace$. This is then a frozenisation of $u$ with respect to
	$\lbrace v_i \rbrace - \lbrace v_k \rbrace$.
\end{definition}

\begin{definition}[{\cite[Section 2]{FZ-CA2}}]%
	A \textit{geodesic loop} $\mathcal{L} = \mathcal{L}^{a,b}_u$ is the exchange graph of a
	cofrozenisation $u \backslash \lbrace a,b \rbrace$ which leaves only two vertices
	$a$ and $b$ unfrozen. A loop is then either a polygon with $4,5,6$ or $8$
	sides or an infinite line, which embeds into the exchange graph of the
	mutation class of $u$.

	The \textit{distance} between a geodesic loop $\mathcal{L}$ and any vertex $v$ in
	$\mathcal{E}$ is the (possibly zero) minimum number of edges in $\mathcal{E}$ between $v$
	and any vertex in $\mathcal{L}$.

	The \textit{length} of a geodesic loop $\operatorname{Len}\left( \mathcal{L} \right) \in \lbrace
	4,5,6,8,\infty \rbrace$ is the number of edges in the loop.
\end{definition}

Geodesic loops as subgraphs of a larger exchange graph give rise to the
following sets, which encode the information about a given seed represented by a
vertex of the exchange graph. The following construction is a slight notational
variation of the one given by Chang and Zhu in Definition 3.1 of~\cite{CZ-exch}.

\begin{definition}
	Let $u$ be a seed of rank $n$ in an exchange graph, then define $N^0(u)$ to be
	the set of $\binom{n}{2}$ numbers given by the length of all geodesic loops
	distance $0$ from $u$. Similarly define $N^1(u)$ to be the set of $n
	\binom{n-1}{2}$ numbers given by the lengths of all geodesic loops distance
	$1$ from $u$.
\end{definition}

\begin{remark}
	An exchange graph automorphism $\phi \in \operatorname{Aut}\mathcal{E}$ induces an automorphism
	$\phi^\Delta \in \operatorname{Aut}\Delta$ and in this way $\phi$ induces a map $\phi_v$ which
	takes cluster variables in a seed $u$ to variables in $\phi(u)$.
\end{remark}

A geodesic loop $\mathcal{L}_u^{a,b}$ in an exchange graph $\mathcal{E}$ must get mapped to
another geodesic loop of the same length by any exchange graph automorphism,
however it is not clear that the image of $\mathcal{L}_u^{a,b}$ will be generated by
the cofrozenisation $\phi(u) \backslash \lbrace \phi_v(a),\phi_v(b) \rbrace$ rather
than another cofrozenisation with two different unfrozen vertices in $u$. The
following Lemma explains that this must always be the case.

\begin{lemma}\label{lem:fixfrozexch}
	Let $u$ be a seed in a cluster algebra $\mathcal{A}$ and $\phi \in \operatorname{Aut}\mathcal{E}_\mathcal{A}$.
	For any two vertices $a$ and $b$ the geodesic loop $\mathcal{L}^{a,b}_u$ is
	isomorphic to its image $\mathcal{L}^{\phi_v(a),\phi_v(b)}_{\phi(u)}$.
\end{lemma}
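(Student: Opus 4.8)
The plan is to lift everything to the labelled exchange graph, where edges carry labels recording the mutation direction, and to exploit the fact that the pullback $\phi^\Delta \in \operatorname{Aut}\Delta(\mathcal{S}^0)$ (which exists by Theorem~\ref{thm:pullback}) preserves these labels. Fix a labelled seed $\tilde u$ projecting to $u$ under $\pi : \mathcal{S}^0 \to \mathcal{S}$, and let $i_a$ and $i_b$ be the positions occupied by the vertices $a$ and $b$. First I would observe that the geodesic loop $\mathcal{L}^{a,b}_u$ is exactly the image under $\pi$ of the labelled subgraph $\tilde{\mathcal{L}} \subseteq \Delta(\mathcal{S}^0)$ consisting of all seeds reachable from $\tilde u$ along edges labelled $i_a$ or $i_b$: the cofrozenisation $u \backslash \lbrace a,b \rbrace$ freezes every vertex except $a,b$, so only mutations at positions $i_a,i_b$ are permitted, and since mutation never permutes labels these two positions persist throughout, whence $\tilde{\mathcal L}$ is precisely the set of labelled seeds obtained by mutating only at $i_a,i_b$.

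Next I would apply $\phi^\Delta$. Since $\phi^\Delta$ is an automorphism of the \emph{labelled} exchange graph it preserves edge labels, so it carries each edge labelled $i_a$ (resp.\ $i_b$) to an edge with the same label. Consequently $\phi^\Delta(\tilde{\mathcal L})$ is again a connected subgraph built entirely from edges labelled $i_a$ and $i_b$ and containing $\phi^\Delta(\tilde u)$; that is, it is the labelled loop obtained from $\phi^\Delta(\tilde u)$ by mutating only at positions $i_a,i_b$. This is the step that resolves the concern raised before the statement: a priori the image of the loop could be a cofrozenisation at two different unfrozen vertices, but label preservation pins down that the image uses precisely the directions $i_a,i_b$.

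Finally I would project back down, using the compatibility $\pi \circ \phi^\Delta = \phi \circ \pi$ established earlier, to obtain
\[ \phi\left(\mathcal{L}^{a,b}_u\right) = \phi\left(\pi(\tilde{\mathcal L})\right) = \pi\left(\phi^\Delta(\tilde{\mathcal L})\right). \]
The right-hand side is the geodesic loop of the seed $\phi(u) = \pi(\phi^\Delta(\tilde u))$ obtained by leaving unfrozen exactly the vertices at positions $i_a,i_b$. By the definition of the induced map $\phi_v$ through $\phi^\Delta$, the cluster variables at positions $i_a,i_b$ in $\phi^\Delta(\tilde u)$ are $\phi_v(a)$ and $\phi_v(b)$, so this loop is $\mathcal{L}^{\phi_v(a),\phi_v(b)}_{\phi(u)}$. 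Hence $\phi\big(\mathcal{L}^{a,b}_u\big) = \mathcal{L}^{\phi_v(a),\phi_v(b)}_{\phi(u)}$, and since $\phi$ is a graph automorphism it restricts to an isomorphism between the two loops.

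I expect the main obstacle to be the first paragraph: carefully justifying that the cofrozenisation loop lifts to precisely the $\lbrace i_a,i_b \rbrace$-labelled subgraph of $\Delta(\mathcal{S}^0)$, i.e.\ that freezing the remaining vertices yields the same adjacencies (via mutation at $i_a,i_b$) as the embedding into $\mathcal{E}$, so that $\tilde{\mathcal L}$ really is a single loop compatible with $\pi$. Once this identification is in place, the label-preservation of $\phi^\Delta$ does all the work and the remaining steps are formal.
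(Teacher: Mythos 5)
Your proof is correct and uses essentially the same idea as the paper: the paper converts $\phi$ into the corresponding $\phi_{M_n}\in\operatorname{Aut}_{M_n}(\mathcal{S}^0)$ and uses commutation with the mutations $\mu_i,\mu_j$ to show the image loop satisfies the same defining relation (hence has the same length), while you work with the label-preserving pullback $\phi^\Delta$ from Theorem~\ref{thm:pullback} -- the same machinery, phrased on $\Delta(\mathcal{S}^0)$ rather than via $M_n$. Both arguments pin down the image loop as the cofrozenisation at $\phi_v(a),\phi_v(b)$ for exactly the reason you identify.
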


\begin{proof}
	Choose some ordering on $u$ so that the vertices $a = v_i$ and $b = v_j$ are
	indexed by $i$ and $j$ respectively, then the length of the geodesic loop
	specifies a relation $u = u \cdot \mu_i \mu_j \mu_i \dotsb$.

	For example if the loop has length $6$, then $u = u \cdot \left( \mu_i \mu_j
	\right)^3$, whereas if the length is $5$ then $u = u \cdot \mu_i \mu_j \mu_i
	\mu_j \mu_i$.

	The exchange graph automorphism $\phi$ corresponds to some $\phi_{M_n} \in
	\operatorname{Aut}_{M_n}$ which commutes with the action of $M_n$. Hence
	\[ \phi_{M_n}(u) = \phi_{M_n}(u \cdot \mu_i \mu_j \dotsb) =
	\phi_{M_n}(u) \cdot \mu_i \mu_j \dotsb \]%
	so the geodesic loop $\mathcal{L}^{\phi_v(a),\phi_v(b)}_{\phi(u)}$ has the same
	length as the geodesic loop $\mathcal{L}^{a,b}_u$, and hence the two loops are
	isomorphic.
\end{proof}

Exchange graph automorphisms preserve the combinatorial structure
around a seed. As these automorphisms are compatible with mutations the above
result could be extended to the exchange graphs of cofrozenisations with any
number of unfrozen vertices.

\begin{lemma}\label{lem:Npreserved}
	If $\phi \in \operatorname{Aut}\mathcal{E}$ is an exchange graph automorphism, with $u$ a seed and
	$v=\phi(u)$ its image, then $N^0(u) = N^0(v)$ and $N^1(u) = N^1(v)$.
\end{lemma}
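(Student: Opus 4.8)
The plan is to lean on Lemma~\ref{lem:fixfrozexch} together with two elementary facts about a graph automorphism $\phi$: it carries neighbours of $u$ bijectively to neighbours of $v = \phi(u)$, and, being an isometry of $\mathcal{E}$, it preserves the distance of any subgraph from $u$. Combining these, I would show that the geodesic loops recorded by $N^0$ and $N^1$ are carried bijectively onto the corresponding loops of $v$ with their lengths unchanged, so that the two multisets of lengths agree.

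First I would dispose of $N^0$. By definition $N^0(u)$ is the multiset of lengths of the $\binom{n}{2}$ geodesic loops $\mathcal{L}^{a,b}_u$, one for each unordered pair $\{a,b\}$ of cluster variables of $u$. The induced map $\phi_v$ is a bijection from the cluster variables of $u$ to those of $v$, hence $\{a,b\}\mapsto\{\phi_v(a),\phi_v(b)\}$ is a bijection on unordered pairs. Lemma~\ref{lem:fixfrozexch} gives $\operatorname{Len}\left(\mathcal{L}^{a,b}_u\right)=\operatorname{Len}\left(\mathcal{L}^{\phi_v(a),\phi_v(b)}_{v}\right)$ for every pair, and as $\{\phi_v(a),\phi_v(b)\}$ runs over all pairs of cluster variables of $v$ exactly once, the loop $\mathcal{L}^{\phi_v(a),\phi_v(b)}_{v}$ runs over all distance-$0$ loops of $v$. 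Therefore the two multisets of lengths coincide, i.e. $N^0(u)=N^0(v)$.

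Next I would treat $N^1$. Here I first observe that each geodesic loop at distance $1$ from $u$ is based at one of the $n$ neighbours $w$ of $u$ and uses a pair of the $n-1$ cluster variables of $w$ other than the one changed in passing from $u$ to $w$; a pair including that changed variable would give a loop meeting $u$, hence a distance-$0$ loop. This recovers the count $n\binom{n-1}{2}$. Since $\phi$ sends the edge $u-w$ to the edge $v-\phi(w)$, the neighbours of $u$ map bijectively to the neighbours of $v$, and the ``return'' variable at $w$ (the one pointing back to $u$) is matched with the return variable at $\phi(w)$. Applying Lemma~\ref{lem:fixfrozexch} with base seed $w$ carries each loop $\mathcal{L}^{a,b}_w$ to $\mathcal{L}^{\phi_v(a),\phi_v(b)}_{\phi(w)}$ of equal length, and because the excluded return variable is preserved, the $\binom{n-1}{2}$ admissible pairs at $w$ biject with those at $\phi(w)$. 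Hence the distance-$1$ loops of $u$ are carried bijectively and length-preservingly onto those of $v$, giving $N^1(u)=N^1(v)$.

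The main obstacle I anticipate is the bookkeeping for $N^1$: one must check that a loop based at a neighbour $w$ remains at distance exactly $1$ from $v$ under $\phi$, and that the pair excluded from $N^1$ is matched correctly with the direction back to $v$, so that no distance-$0$ loop is miscounted and the per-neighbour count $\binom{n-1}{2}$ is preserved. This is precisely what the combination of distance-preservation for graph automorphisms and Lemma~\ref{lem:fixfrozexch} supplies, so once those two inputs are in place the argument is routine.
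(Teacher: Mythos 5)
Your proof is correct and takes the route the paper intends: the paper states Lemma~\ref{lem:Npreserved} without proof, treating it as an immediate consequence of Lemma~\ref{lem:fixfrozexch}, and your argument is a careful expansion of exactly that deduction (bijection on pairs of unfrozen vertices for $N^0$, plus the distance-preservation and return-variable bookkeeping for $N^1$).
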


\begin{lemma}\label{lem:NdefinesB}
	Given a mutation-finite diagram with at least 3 vertices, the exchange graph
	of a frozenisation leaving just two vertices unfrozen determines the weight on
	the arrow between the two unfrozen vertices.
\end{lemma}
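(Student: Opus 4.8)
The plan is to reduce the statement to the classical analysis of rank-two cluster algebras. Freezing every vertex of the diagram except $a$ and $b$ produces a cluster algebra with exactly two mutable vertices --- a rank-two cluster algebra whose coefficients come from the frozen part --- and by definition its exchange graph is precisely the geodesic loop $\mathcal{L}^{a,b}_u$. The only feature of this loop I would try to read off is its length $\operatorname{Len}\left(\mathcal{L}^{a,b}_u\right) \in \lbrace 4,5,6,8,\infty \rbrace$, and the goal is to show that this length pins down the weight $w = -b_{a,b}\,b_{b,a}$ of the arrow joining $a$ and $b$.

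First I would observe that the length of $\mathcal{L}^{a,b}_u$ depends only on the principal part $\left(\begin{smallmatrix} 0 & b_{a,b} \\ b_{b,a} & 0 \end{smallmatrix}\right)$, that is only on $w$, and not on the frozen coefficients. Indeed, alternately mutating at $a$ and $b$ merely flips the sign of this principal part at each step, exactly as in the coefficient-free case, so the period of the sequence $u,\ u\cdot\mu_a,\ u\cdot\mu_a\mu_b,\dotsc$ is governed by $w$ alone. The classification of rank-two cluster algebras in~\cite{FZ-CA2} then gives the correspondence between $w=0,1,2,3$ and loops of length $4,5,6,8$ (the cases $A_1\times A_1$, $A_2$, $B_2$ and $G_2$), while $w\geq 4$ yields the infinite line.

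It remains to bound the weight $w$, and here the two hypotheses are essential. Because the diagram is mutation-finite and has at least three vertices, the classification of mutation-finite skew-symmetrizable diagrams of rank at least $3$ in~\cite{FST-mutfin,FST-orbifolds} shows that every arrow weight lies in $\lbrace 1,2,3,4 \rbrace$ (with weight $4$ realised, for instance, by the doubled arrows of the Markov diagram). Hence $w \in \lbrace 0,1,2,3,4 \rbrace$, a set on which the assignment to loop lengths $4,5,6,8,\infty$ is injective, so $\operatorname{Len}\left(\mathcal{L}^{a,b}_u\right)$ determines $w$. I expect the main subtlety to be precisely this weight bound: in rank two an arrow may carry any weight $\geq 4$, all of which produce an infinite loop, so without the rank-at-least-three hypothesis an infinite geodesic loop could not distinguish weight $4$ from any larger weight. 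Establishing that the exchange graph of the cofrozenisation genuinely coincides with the rank-two pattern of its principal part (rather than being altered by the coefficients) is the only other point needing care, and it follows from the sign-coherent periodicity noted above.
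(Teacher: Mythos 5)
Your proposal is correct and takes essentially the same route as the paper: read off the weight from the length of the geodesic loop via the rank-two correspondence $w=0,1,2,3,{\geq}4 \leftrightarrow \operatorname{Len} = 4,5,6,8,\infty$, and use the bound $w\leq 4$ for mutation-finite diagrams on at least three vertices to make the assignment injective. The only difference is that you spell out two points the paper takes for granted (that the loop length depends only on the principal $2\times 2$ part, and the source of the weight bound), which is a reasonable elaboration rather than a different argument.
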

\begin{proof}
	The exchange graph of the frozenisation leaving just two vertices $a$ and $b$
	unfrozen is a geodesic loop $\mathcal{L}^{a,b}$ with length $\operatorname{Len}\left( \mathcal{L} \right)\in
	\lbrace 4,5,6,8,\infty\rbrace$.

	If $\operatorname{Len}\left( \mathcal{L} \right)=4$ then the vertices have no arrow between them, while if
	$\operatorname{Len}\left( \mathcal{L} \right)=5$ there is a single unweighted arrow. If $\operatorname{Len}\left( \mathcal{L} \right)=6$
	then there is an arrow weighted $2$ and $\operatorname{Len}\left( \mathcal{L} \right)=8$ shows there is an
	arrow weighted $3$.

	The highest edge weight in a mutation-finite diagram (with more than 2
	vertices) is $4$, so $\operatorname{Len}\left( \mathcal{L} \right)=\infty$ implies that there is an arrow
	weighted $4$.
\end{proof}

\begin{remark}
	For any $2$-vertex diagram $B$, an edge weight of $4$ or more will always give
	$\operatorname{Len}\left( \mathcal{L} \right)=\infty$, so the exchange graph cannot determine this weight.
	However the only diagrams mutation-equivalent to $B$ are $B$ and $B^{\textrm{op}}$, so
	all diagrams in the same mutation class have the same edge weight.
\end{remark}

\subsection{Exchange graph automorphism effects on diagrams and matrices}

\begin{lemma}\label{lem:unoriented}
	An exchange graph automorphism $\phi \in \operatorname{Aut}\mathcal{E}$ takes a seed $u = (x,B)$
	to another seed $v = \phi(u) = (x',B')$ where the unoriented diagram of $B'$
	is the same as the unoriented diagram of $B$.
\end{lemma}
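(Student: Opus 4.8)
The plan is to use the previously developed machinery connecting geodesic loops to diagram weights. The statement claims that an exchange graph automorphism $\phi$ preserves the \emph{unoriented} diagram, i.e.\ the underlying weighted graph structure (which vertices are connected and with what weights), though possibly reversing orientations. The key observation is that Lemma~\ref{lem:NdefinesB} tells us that the exchange graph of a frozenisation leaving two vertices unfrozen determines the weight on the edge between those two vertices, and Lemma~\ref{lem:fixfrozexch} tells us that $\phi$ carries the geodesic loop $\mathcal{L}^{a,b}_u$ to an isomorphic geodesic loop $\mathcal{L}^{\phi_v(a),\phi_v(b)}_{\phi(u)}$ of the same length.

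First I would set up the correspondence between edge weights and geodesic loop lengths. Given the seed $u = (x,B)$ with diagram $R$, for any pair of vertices $a,b$ in $u$ the weight of the edge between them in $R$ is determined by $\operatorname{Len}(\mathcal{L}^{a,b}_u)$ via the dictionary in Lemma~\ref{lem:NdefinesB}: length $4$ means no edge, $5$ means weight $1$, $6$ means weight $2$, $8$ means weight $3$, and $\infty$ means weight $4$. Applying $\phi$, by Lemma~\ref{lem:fixfrozexch} the image loop $\mathcal{L}^{\phi_v(a),\phi_v(b)}_{\phi(u)}$ has the same length, so the same dictionary reads off the same weight for the edge between $\phi_v(a)$ and $\phi_v(b)$ in the diagram $R'$ of $B' = B(x')$. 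Since $\phi_v$ is the induced bijection on cluster variables (equivalently on vertices), this shows that the weight of each edge of $R$ equals the weight of the corresponding edge of $R'$ under the vertex identification given by $\phi_v$. Hence the underlying unoriented weighted graphs of $B$ and $B'$ agree.

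The one point requiring care is the hypothesis on the number of vertices. Lemma~\ref{lem:NdefinesB} assumes the diagram has at least $3$ vertices so that the maximal edge weight is $4$ and the length-$\infty$ case unambiguously gives weight $4$; for a $2$-vertex diagram, as the remark after Lemma~\ref{lem:NdefinesB} notes, a weight of $4$ or more produces $\operatorname{Len}(\mathcal{L}) = \infty$ and the loop cannot distinguish these weights. So in the rank-$2$ case I would invoke that remark directly: the only diagrams mutation-equivalent to $B$ are $B$ and $B^{\mathrm{op}}$, which have identical unoriented diagrams, so the conclusion holds trivially regardless of what $\phi$ does. For rank $\geq 3$, every edge weight is recovered from loop lengths as above.

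The main obstacle I anticipate is simply ensuring the vertex-identification is coherent: the induced map $\phi_v$ on cluster variables must send the unfrozen pair $\{a,b\}$ used to build $\mathcal{L}^{a,b}_u$ to the pair $\{\phi_v(a),\phi_v(b)\}$ generating the image loop, rather than the image loop arising from some unrelated cofrozenisation. This is exactly what Lemma~\ref{lem:fixfrozexch} guarantees, so the argument reduces to quoting that lemma for each of the $\binom{n}{2}$ pairs and reading off weights through Lemma~\ref{lem:NdefinesB}. No orientation information is claimed, so there is nothing to prove about arrow directions here — that refinement is presumably the content of subsequent results.
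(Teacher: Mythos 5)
Your proposal is correct and follows essentially the same route as the paper: fix a pair of vertices, invoke Lemma~\ref{lem:fixfrozexch} to see the geodesic loop is carried to an isomorphic one, and read off the edge weight via Lemma~\ref{lem:NdefinesB}. Your explicit treatment of the rank-$2$ case via the remark following Lemma~\ref{lem:NdefinesB} is a small point of care the paper's proof leaves implicit, but the argument is the same in substance.
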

\begin{proof}
	Fix any two vertices $u_0$ and $u_1$ in $u$. Under $\phi_v$ these vertices are
	mapped to corresponding vertices $\phi_v(u_0) = v_0$ and $\phi_v(u_1) = v_1$
	in $v$.

	The weight on (or absence of) the arrow between $u_0$ and $u_1$ determines the
	exchange graph $\mathcal{E}_u$ of the cofrozenisation $u \backslash \lbrace u_0, u_1
	\rbrace$. By Lemma~\ref{lem:fixfrozexch}, $\mathcal{E}_u$ is isomorphic to the
	exchange graph $\mathcal{E}_v$ of the cofrozenisation $v \backslash \lbrace v_0,v_1
	\rbrace$. Hence this exchange graph determines the arrow between $v_0$ and
	$v_1$ by Lemmas~\ref{lem:Npreserved} and~\ref{lem:NdefinesB}, which
	necessarily must be the same as that between $u_0$ and $u_1$.
\end{proof}

This shows that the unoriented diagrams of two seeds related by an exchange
graph automorphism must be the same. To see how exchange graphs automorphisms
affect the orientations of the arrows we need to consider frozenisations with
three unfrozen vertices.

\begin{lemma}\label{lem:3vert-N}
	For any seed $u = (x,B)$ with 3 vertices in an exchange graph of a
	mutation-finite skew-symmetrizable diagram, the diagram of $B$ is determined
	by the sets $N^0(u)$ and $N^1(u)$, up to reversing all arrows.
\end{lemma}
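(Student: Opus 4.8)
The plan is to first recover the unoriented diagram of $B$ and then use the distance-$1$ data to fix the orientations up to a global reversal. Each geodesic loop at distance $0$ from $u$ is the exchange graph of a cofrozenisation $u \backslash \lbrace a,b \rbrace$, so by Lemma~\ref{lem:NdefinesB} its length determines the weight of the edge between $a$ and $b$; hence the three lengths collected in $N^0(u)$ determine the weights on all three edges and so pin down the unoriented diagram (this is the intrinsic content behind Lemma~\ref{lem:unoriented}). The only remaining freedom is the choice of orientation on this fixed unoriented diagram, and I would show that $N^1(u)$ determines this orientation up to reversing every arrow.

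The key observation is how mutation at a vertex $k$ changes the edge between the other two vertices $a$ and $b$. Writing $B = (b_{i,j})$, the mutation rule gives $b'_{a,b} = b_{a,b} + \tfrac{1}{2}\left( |b_{a,k}|\,b_{k,b} + b_{a,k}\,|b_{k,b}| \right)$, and the increment vanishes exactly when $b_{a,k}$ and $b_{k,b}$ have opposite signs, that is when $k$ is a local source or sink for the pair $\lbrace a,b \rbrace$, and is nonzero exactly when the arrows align into a directed path $a \to k \to b$ (or its reverse), so that $k$ is a \emph{pass-through}. The distance-$1$ geodesic loop obtained by mutating at $k$ and freezing $k$ records, via Lemma~\ref{lem:NdefinesB}, the weight of the $ab$-edge in $u \cdot \mu_k$; comparing this with the weight of the $ab$-edge in $u$ (already known from $N^0(u)$) tells us, for each vertex, whether it is a pass-through or a source/sink with respect to the opposite edge.

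I would then translate this pass-through data into orientation data. For a triangle every vertex is a pass-through if and only if the triangle is cyclically oriented; otherwise exactly one vertex, the middle of the acyclic orientation, is a pass-through while the source and sink are not. For a path (one edge of weight $0$) mutation at the central vertex creates the missing edge precisely when the path is directed through that vertex. In each case the pattern of pass-throughs and source/sinks, together with the unoriented diagram, determines the orientation up to reversing all arrows: a global reversal preserves every such classification, and this is precisely the ambiguity it cannot resolve.

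Because $B$ is mutation-finite of rank $3$, it lies in the finite list of mutation-finite rank-$3$ diagrams, so the argument reduces to checking that the assignment from an oriented diagram (up to global reversal) to the pair $\left( N^0(u), N^1(u) \right)$ is injective over this list. The main obstacle I anticipate is that $N^0(u)$ and $N^1(u)$ are \emph{unordered} collections of numbers, so when two edges share the same weight the matching of distance-$1$ loops to individual vertices is not immediate; this is resolved by noting that the unoriented diagram already pairs loop lengths with vertex-pairs up to its own symmetry, and any residual symmetry either coincides with the global-reversal freedom or is induced by a genuine automorphism of the diagram, in neither case affecting the conclusion.
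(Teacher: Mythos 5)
Your overall strategy matches the paper's: $N^0(u)$ together with Lemma~\ref{lem:NdefinesB} recovers the unoriented diagram, and the orientation (up to global reversal) is then pinned down by a finite verification over the classified list of mutation-finite $3$-vertex diagrams --- the paper does exactly this via Tables~\ref{tab:diconnN}--\ref{tab:symmetrizN}, and your final paragraph reduces to the same check. You also correctly flag the unordered-multiset issue, which the paper passes over silently.

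However, the ``key observation'' you interpose --- that comparing the weight of the $ab$-edge in $u\cdot\mu_k$ with that in $u$ detects whether $k$ is a pass-through --- is not true in general, and you should not present it as the mechanism by which $N^1$ determines orientations. The Markov quiver is a counterexample: with $b_{1,2}=b_{2,3}=b_{3,1}=2$ every vertex is a pass-through for the opposite pair, yet mutating at vertex $3$ sends $b_{1,2}=2$ to $b'_{1,2}=2-4=-2$ and $b_{2,1}=-2$ to $b'_{2,1}=2$, so the diagram weight $\left|b'_{1,2}b'_{2,1}\right|=4$ is unchanged and the corresponding geodesic loop still has length $\infty$. Your test would therefore classify vertex $3$ as a source or sink. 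The lemma survives because the acyclic orientation of the all-weight-$4$ triangle is mutation-infinite (mutating at its middle vertex produces an entry of absolute value $6$), so within the mutation-finite world there is no competing orientation to distinguish; but that rescue comes from the enumeration, not from the pass-through dichotomy. So either restrict the pass-through argument to the cases where the opposite edge weight genuinely changes and handle the remaining diagrams by direct inspection, or drop the conceptual layer and simply verify injectivity of $u \mapsto \left( N^0(u), N^1(u) \right)$ over the finite list, as the paper does.
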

\begin{proof}
	The unoriented diagram of $B$ is determined by $N^0(u) = \lbrace n_i \rbrace$,
	where each $n_i \in \lbrace 4,5,6,8,\infty \rbrace$ determines a weighted
	arrow, or absence of arrow, between two vertices.

	The orientation of $B$ (up to reversing all arrows) is given by $N^1(u)$ as
	shown in Tables~\ref{tab:diconnN},~\ref{tab:symmetricN}
	and~\ref{tab:symmetrizN}, where all mutation-finite 3-vertex diagrams are
	illustrated along with their defining sets $N^0$ and $N^1$. Hence the pair
	$\left( N^0, N^1 \right)$ defines a unique diagram, up to reversing all
	arrows.
\end{proof}

\begin{table}[p]
	\centering%
	\includegraphics{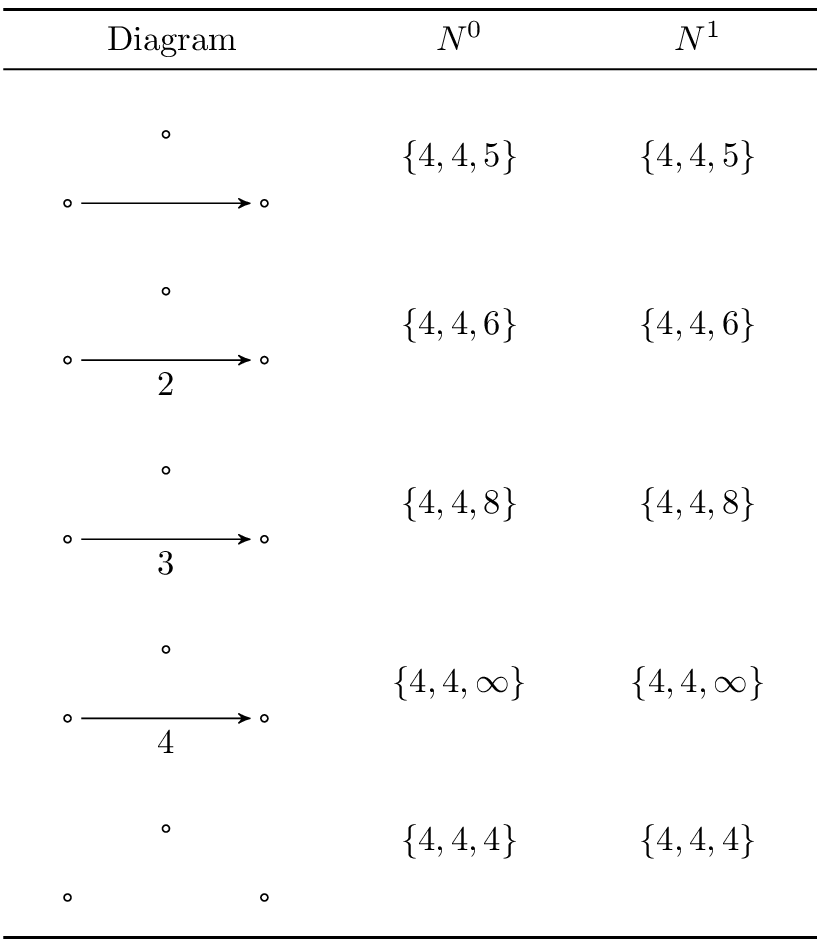}
	\caption{Disconnected 3-vertex diagrams determined by values of $N^0$.}
	\label{tab:diconnN}
\end{table}

\begin{table}[p]
	\centering%
	\includegraphics{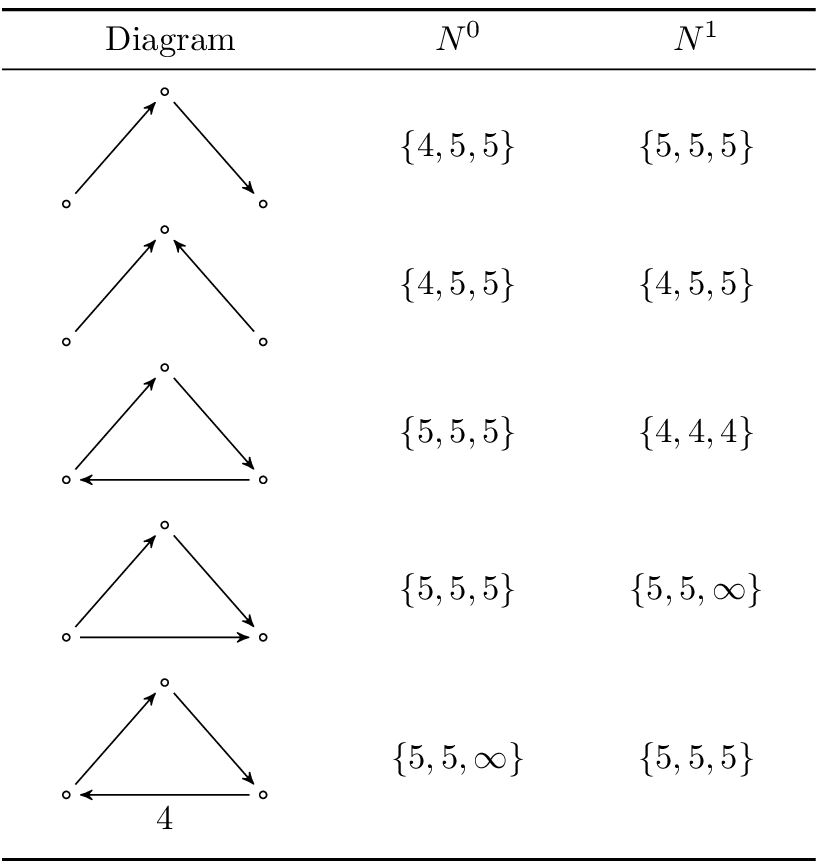}
	\caption{Connected skew-symmetric 3-vertex diagrams determined by values of
	$N^0$ and $N^1$.}
	\label{tab:symmetricN}
\end{table}

\begin{table}[p]
	\centering%
	\includegraphics{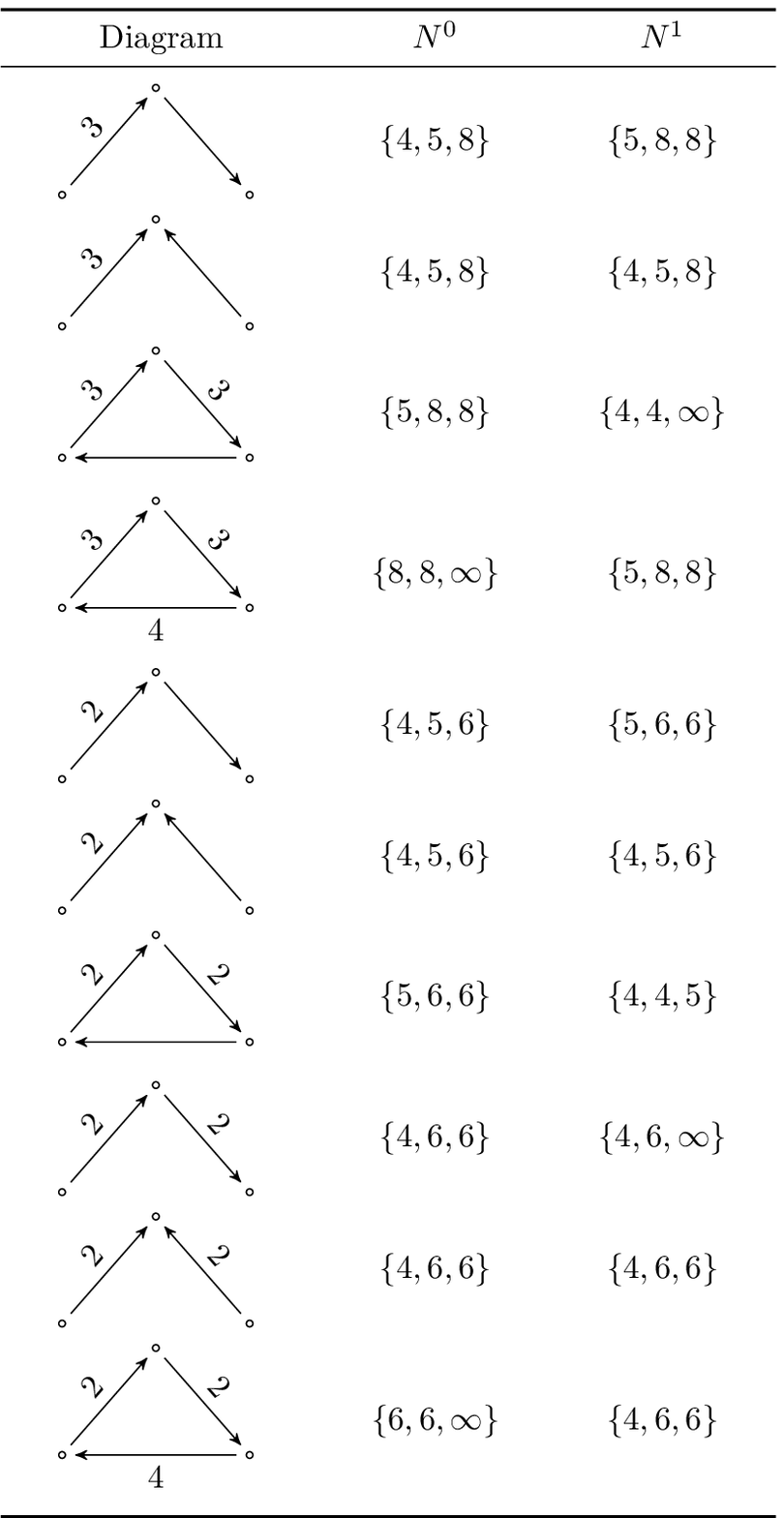}
	\caption{Connected skew-symmetrizable 3-vertex diagrams determined by values of
	$N^0$ and $N^1$.}
	\label{tab:symmetrizN}
\end{table}

In the case $N^0(u) = \lbrace 4,4,\infty \rbrace$ the diagram is of the form:
\begin{center}\includegraphics{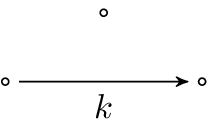}\end{center}%
where the weight satisfies $k \geq 4$ and so the diagram is not uniquely
determined.  However if $k > 4$ then the resulting diagram will never appear as
a subdiagram of any larger mutation-finite diagram. This is precisely the setup
used in the proofs below and so $N^0(u) = \lbrace 4,4,\infty \rbrace$ is always
assumed to correspond to a diagram of the form:
\begin{center}\includegraphics{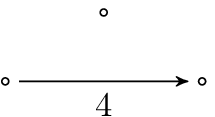}\end{center}%

\begin{proposition}\label{prop:exchdiagram}
	Let $\phi \in \operatorname{Aut}\mathcal{E}$ be an exchange graph automorphism and $u = (\textbf{x},B)$ a
	seed where $B$ is a mutation-finite skew-symmetrizable matrix with
	corresponding connected diagram $R$. In the image $\phi(u) = (\textbf{x}',B')$, the
	diagram $R'$ corresponding to the matrix $B'$ is either $R$ or $R^{\textrm{op}}$.
\end{proposition}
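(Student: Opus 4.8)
The plan is to use Lemma~\ref{lem:unoriented} to dispose of the unoriented structure and then reduce the question of orientations to a local, triple-by-triple analysis that is glued together using the connectedness of $R$. By Lemma~\ref{lem:unoriented} the seed $\phi(u) = (\textbf{x}', B')$ has the same \emph{unoriented} diagram as $u$, so $R'$ and $R$ share the same underlying weighted graph and it remains only to compare the orientations of the arrows. For each arrow $e$ of $R$ let $\epsilon(e) \in \lbrace +1, -1 \rbrace$ record whether the correspondingly-weighted edge of $R'$ points the same way ($+1$) or the opposite way ($-1$); proving the proposition amounts to showing that $\epsilon$ is constant, since $\epsilon \equiv +1$ gives $R' = R$ and $\epsilon \equiv -1$ gives $R' = R^{\textrm{op}}$.

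First I would show that any two arrows of $R$ sharing a vertex have the same value of $\epsilon$. Fix such arrows $e_1 = \lbrace i,j \rbrace$ and $e_2 = \lbrace j,k \rbrace$ and consider the cofrozenisation $u \backslash \lbrace i,j,k \rbrace$, whose unfrozen part is the $3$-vertex seed carrying the restriction of $R$ to $\lbrace i,j,k \rbrace$; this restriction is again mutation-finite, so Lemma~\ref{lem:3vert-N} applies. By the extension of Lemma~\ref{lem:fixfrozexch} to cofrozenisations with three unfrozen vertices (noted in the remark following that lemma), the exchange graph of $u \backslash \lbrace i,j,k \rbrace$ is isomorphic to that of $\phi(u) \backslash \lbrace \phi_v(i), \phi_v(j), \phi_v(k) \rbrace$, so the two cofrozenisations have equal sets $N^0$ and $N^1$. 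Lemma~\ref{lem:3vert-N} then forces the $3$-vertex subdiagram of $R'$ on these vertices to be either the restriction of $R$ to $\lbrace i,j,k \rbrace$ or its opposite. In either case $e_1$ and $e_2$ are preserved together or reversed together, so $\epsilon(e_1) = \epsilon(e_2)$. The only $3$-vertex value needing care is $N^0 = \lbrace 4,4,\infty \rbrace$, whose diagram is not a priori unique, but since the triple sits inside the mutation-finite diagram $R$ the weight must be $4$, as discussed after Lemma~\ref{lem:3vert-N}.

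Next I would globalise this. Since $R$ is connected, its line graph --- vertices being the arrows of $R$, with two adjacent when they share an endpoint --- is connected whenever $R$ has at least two arrows, and the previous paragraph shows $\epsilon$ is invariant along each such adjacency. Hence $\epsilon$ is constant on all arrows and $R' \in \lbrace R, R^{\textrm{op}} \rbrace$. The remaining degenerate case is when $R$ has a single arrow, i.e.\ two vertices: there the whole mutation class consists only of that diagram and its opposite, so $R'$, being the diagram of a seed $\phi(u)$ lying in the mutation class of $R$, is automatically $R$ or $R^{\textrm{op}}$; the one-vertex case is trivial.

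The main obstacle is the globalisation step: the triple analysis only pins the orientation down up to reversing all three arrows of a given triple, so a priori different triples could demand incompatible reversals. The resolution is exactly that two overlapping triples share an arrow whose image orientation is a single well-defined datum, forcing the local reversals to agree; connectedness of $R$ then propagates this agreement across the whole diagram.
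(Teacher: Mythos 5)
Your proof is correct and follows essentially the same route as the paper: both reduce to the $3$-vertex cofrozenisations via Lemma~\ref{lem:fixfrozexch} and Lemma~\ref{lem:3vert-N}, then propagate the local ``preserve or reverse'' dichotomy across the connected diagram by overlapping triples. Your formalisation via the sign function $\epsilon$ on the line graph, and your explicit treatment of the one- and two-vertex cases, are minor refinements of the paper's argument (which glues triples sharing an edge rather than arrows sharing a vertex) but not a different proof.
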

\begin{proof}
	Choose any 3 vertices $a,b,c$ in $u$, then by Lemma~\ref{lem:fixfrozexch}
	there is an isomorphism $\mathcal{E}\left( u \backslash \lbrace a,b,c \rbrace \right)
	\cong \mathcal{E} \left( \phi(u) \backslash \lbrace \phi(a), \phi(b),\phi(c) \rbrace
	\right)$ and $N^0(u) = N^0(\phi(u))$, $N^1(u) = N^1(\phi(u))$. Therefore by
	Lemma~\ref{lem:3vert-N} the subdiagram $S$ of $R$ consisting just of the
	arrows between $a,b$ and $c$ is the same as the subdiagram $S'$ of $R'$
	consisting of the arrows between $\phi(a),\phi(b)$ and $\phi(c)$, up to
	reversing all arrows.

	Choose a fourth vertex $d$ and consider the 3-vertex subdiagram $S_a$ on the
	vertex set $\lbrace b,c,d \rbrace$. By the same reasoning as above the image
	$S'_a = \phi(S_a)$ must be the same, but possibly with all arrows reversed.
	However both $S'$ and $S_a'$ share the edge between vertices $\phi(b)$ and
	$\phi(c)$, so if $S' = S^{\textrm{op}}$ then $S'_a = S_a^{\textrm{op}}$ whereas if $S' = S$ then
	$S'_a=S_a$.

	As $R$ is connected, by successively choosing different vertices, the whole
	diagram $R'$ must either be the same as $R$ or $R^{\textrm{op}}$.
\end{proof}

This shows that any exchange graph automorphism takes clusters to clusters and
a diagram to itself or its opposite. However this is not enough to show that
these automorphisms are cluster automorphisms, as this requires the matrix $B$ of
the diagram to be sent to $\pm B$. For this we require the markings on the
exchange graph.

\begin{proposition}\label{prop:mexchmatrix}
	Given a marked exchange graph automorphism $\phi \in \operatorname{Aut}\widehat{\mathcal{E}}$ and a seed $u
	= (x,B)$ with image $\phi(u) = (x',B')$, then the matrix $B' = B$ or $-B$.
\end{proposition}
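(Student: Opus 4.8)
The plan is to combine Proposition~\ref{prop:exchdiagram}, which already controls the diagram of $B'$, with the additional symmetrizer data carried by the markings. First I would record how a skew-symmetrizable matrix is reconstructed from its diagram together with its symmetrizing matrix $D = \operatorname{diag}(d_i)$. The condition that $BD$ be skew-symmetric reads $b_{i,j} d_j = -b_{j,i} d_i$, while the diagram stores the weight $w_{i,j} = -b_{i,j} b_{j,i}$ and the orientation of each arrow. Multiplying these gives $b_{i,j}^2 = w_{i,j}\, d_i / d_j$, so $|b_{i,j}|$ is fixed by the weight and the two symmetrizing entries, and the sign of $b_{i,j}$ is fixed by the orientation. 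Hence a diagram together with a choice of symmetrizing entry at each vertex determines $B$ uniquely; this is exactly the data that distinguishes $B$ from $-B^{T}$, the image that the unmarked graph was unable to exclude (cf.\ Example~\ref{ex:b2-exchaut}).

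Next I would transport the symmetrizers along $\phi$. As $\phi \in \operatorname{Aut}\widehat{\mathcal{E}}$ is in particular an ordinary exchange graph automorphism, Proposition~\ref{prop:exchdiagram} gives that the diagram $R'$ of $B'$ is $R$ or $R^{\textrm{op}}$ under the vertex bijection $\phi_v$. At the seed $u$ the $n$ edges of $\widehat{\mathcal{E}}$ meeting $[u]$ correspond to the local mutations at the $n$ vertices, and by construction of the marked exchange graph the edge for mutation at the vertex carrying $\beta$ is marked with the symmetrizing entry $d_\beta$. Since $\phi$ sends this edge to the edge at $\phi[u]$ for mutation at $\phi_v(\beta)$ and preserves markings, the symmetrizer at $\beta$ equals the symmetrizer at $\phi_v(\beta)$.

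Finally I would assemble the two ingredients. Under $\phi_v$ the diagram $R'$ is $R$ or $R^{\textrm{op}}$ and the symmetrizing entry at every vertex is preserved, and $-B$ has the same symmetrizing matrix as $B$ with diagram $R^{\textrm{op}}$. Applying the reconstruction formula: if $R' = R$ then $B'$ matches $B$ in weights, orientations and symmetrizers, so $B' = B$; if $R' = R^{\textrm{op}}$ then all arrows are reversed, negating each $b_{i,j}$ while leaving the $d_i$ and hence every $|b_{i,j}|$ unchanged, so $B' = -B$.

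The step I expect to be the main obstacle is the middle one: one must verify that the edge-markings of $\widehat{\mathcal{E}}$ encode the per-vertex symmetrizers consistently throughout the mutation class --- using that mutation fixes $D$ and permutation permutes it, as in the definition of the marked exchange graph --- and that the bijection along which the markings are matched is precisely the $\phi_v$ furnished by Proposition~\ref{prop:exchdiagram}. Once this identification of the two bijections is pinned down, the split into the cases $R' = R$ and $R' = R^{\textrm{op}}$ is immediate from the reconstruction formula.
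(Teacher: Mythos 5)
Your proposal is correct and follows essentially the same route as the paper: use Proposition~\ref{prop:exchdiagram} to pin down the diagram up to reversal, use the markings to conclude that each vertex of $\phi(u)$ carries the same symmetrizing entry as its preimage so that $D_{B'}=D_B$, and then observe that a diagram together with its symmetrizing matrix determines the skew-symmetrizable matrix, giving $B'=B$ or $-B$. The only difference is that you make the reconstruction step explicit via $b_{i,j}^2 = w_{i,j}\,d_i/d_j$, which the paper leaves implicit.
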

\begin{proof}
	Let $R$ be the diagram associated to $B$, and let $R'$ be the diagram
	associated to $B'$. Let $D_B$ be the symmetrizing matrix for $B$, each vertex
	$v_k$ in $u$ has a symmetrizing multiplier, which marked exchange graph
	automorphisms preserve, so each vertex $\phi_v(v_k)$ in $\phi(u)$ has the same
	symmetrizing multiplier as $v_k$ and $D_B = D_{B'}$. 
	
	By Proposition~\ref{prop:exchdiagram}, $R'$ is the same as $R$ or $R^{\textrm{op}}$
	with symmetrizing matrix $D_{B'} = D_B$ which defines the skew-symmetrizable
	matrix $B' = B$ or $-B$.
\end{proof}

These results ensure that a marked exchange graph automorphism fixes matrices in
seeds and so correspond to cluster automorphisms. In this way we generalise
Corollary~\ref{cor:mfquiv-exchaut} to all mutation-finite skew-symmetrizable
matrices.

\begin{theorem}\label{thm:mexchaut}
	For a seed $(\textbf{x},B)$ where $B$ is a mutation-finite skew-symmetrizable matrix
	with mutation class $\mathcal{S}$, cluster algebra $\mathcal{A} = \mathcal{A}(\mathcal{S})$ and marked
	exchange graph $\widehat{\mathcal{E}}_\mathcal{A}$ then
	\[ \operatorname{Aut} \mathcal{A} = \operatorname{Aut} \widehat{\mathcal{E}}_\mathcal{A}. \]%
\end{theorem}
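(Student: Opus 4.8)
The plan is to prove the equality by establishing the two inclusions $\operatorname{Aut}\mathcal{A} \subseteq \operatorname{Aut}\widehat{\mathcal{E}}_\mathcal{A}$ and $\operatorname{Aut}\widehat{\mathcal{E}}_\mathcal{A} \subseteq \operatorname{Aut}\mathcal{A}$, viewing both groups through their action on the seeds of $\mathcal{S}$. On one side a cluster automorphism permutes clusters and so induces a permutation of the vertices of $\widehat{\mathcal{E}}_\mathcal{A}$; on the other side a marked graph automorphism $\phi$ induces a map $\phi_v$ on cluster variables via the correspondence of Theorem~\ref{thm:mnexch}. The essential input, already supplied by Proposition~\ref{prop:mexchmatrix}, is that a marked graph automorphism fixes each exchange matrix up to an overall sign, so the remaining task is to translate between the two languages using Lemma~\ref{lem:clautmatrix}.

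For the first inclusion, let $f \in \operatorname{Aut}\mathcal{A}$. By Definition~\ref{def:claut} the map $f$ takes clusters to clusters and commutes with local mutation, so it permutes the seeds of $\mathcal{S}$ and sends mutation-edges to mutation-edges, giving an element of $\operatorname{Aut}\mathcal{E}_\mathcal{A}$. It remains to check that $f$ respects the markings. By Lemma~\ref{lem:clautmatrix} the automorphism $f$ sends each exchange matrix $B$ to $B$ or $-B$, and in either case the symmetrizing matrix $D$ is unchanged. Since the symmetrizer attached to a vertex is carried along by the induced bijection on cluster variables, each edge retains its marking and $f \in \operatorname{Aut}\widehat{\mathcal{E}}_\mathcal{A}$.

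For the converse, take $\phi \in \operatorname{Aut}\widehat{\mathcal{E}}_\mathcal{A} \subseteq \operatorname{Aut}\mathcal{E}_\mathcal{A}$. By Theorem~\ref{thm:mnexch} it corresponds to an element of $\operatorname{Aut}_{M_n}(\mathcal{S}^0)$ and hence induces a map $\phi_v$ on cluster variables. Fix an initial seed $u=(\textbf{x},B)$ with $\textbf{x}=(x_1,\dots,x_n)$ and let $\textbf{x}'=\phi_v(\textbf{x})$ be the cluster of $\phi(u)$. I would then define $f$ to be the unique $\mathbb{K}$-automorphism with $f(x_i)=x_i'$; this is well-defined because each cluster is a transcendence basis generating $\mathbb{K}=\mathbb{C}(x_1,\dots,x_n)$, so $f$ is an invertible $\mathbb{C}$-algebra endomorphism. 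Then $f(\textbf{x})=\textbf{x}'$ is a cluster and, by Proposition~\ref{prop:mexchmatrix}, $B(f(\textbf{x}))=B(\textbf{x}')=B'=B$ or $-B$. Lemma~\ref{lem:clautmatrix} now yields that $f$ is a cluster automorphism.

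To finish, I would confirm that the two assignments are mutually inverse, so that the set bijection upgrades to an equality of automorphism groups. The one point needing genuine care is that the field automorphism $f$, defined only by its values on the initial cluster, really induces the same seed map as $\phi$ at every vertex of $\widehat{\mathcal{E}}_\mathcal{A}$; this follows from the compatibility of cluster automorphisms with mutation established in~\cite{ASS-auts}, once $B(f(\textbf{x}))=\pm B$ is known, so $f$ and $\phi_v$ agree on every cluster variable reachable by mutation. The conceptually hard work has already been discharged by Proposition~\ref{prop:mexchmatrix} --- the reconstruction of the oriented, symmetrized diagram from the marked exchange graph via the geodesic-loop data --- so granting that, the present theorem reduces to the well-definedness of $f$ and the mutual-inverse bookkeeping above, which I expect to be routine.
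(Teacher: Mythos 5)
Your proposal is correct and follows essentially the same route as the paper: both directions hinge on the observation that sending $B$ to $\pm B$ fixes the symmetrizer (hence the marking), on the pullback to the labelled exchange graph to define the field automorphism $f$ on the initial cluster, and on Proposition~\ref{prop:mexchmatrix} combined with Lemma~\ref{lem:clautmatrix} to conclude that $f$ is a cluster automorphism. The extra care you take over well-definedness of $f$ and the mutual-inverse check is sound but goes slightly beyond what the paper records.
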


\begin{proof}
	A cluster automorphism $f \in \operatorname{Aut}\mathcal{A}$ satisfies the following properties:
	\begin{itemize}
		\item $f(\textbf{x})$ is a cluster
		\item $f$ is compatible with mutations
		\item $B(f(\textbf{x})) \cong B$ or $-B$
	\end{itemize}
	for all seeds $(x,B)$ in the mutation class $\mathcal{S}$. Such an automorphism induces an
	automorphism of the exchange graph, and as $f$ sends a matrix $B$ to $\pm B$
	it also fixes the symmetrizing matrix so fixes the marking on the exchange
	graph. Therefore $f \in \operatorname{Aut}\widehat{\mathcal{E}}_\mathcal{A}$ and $\operatorname{Aut}\mathcal{A} \subset \operatorname{Aut}\widehat{\mathcal{E}}_\mathcal{A}$.

	To show that $\operatorname{Aut}\widehat{\mathcal{E}}_\mathcal{A} \subset \operatorname{Aut}\mathcal{A}$ let $(\textbf{x},B)$ be a labelled
	seed, with mutation class $\mathcal{S}^0$ and quotient $\mathcal{S}$. If $\phi \in
	\operatorname{Aut}\widehat{\mathcal{E}}(\mathcal{S}) \subset \operatorname{Aut}\mathcal{E}(\mathcal{S})$, then by Theorem~\ref{thm:pullback}
	this pulls back to an automorphism $\phi^\Delta \in \operatorname{Aut}\Delta(\mathcal{S}^0)$. Then
	the image $\phi^\Delta(\textbf{x}) = (y_1,\dotsc,y_n)$ where $\textbf{x} = (x_1,\dotsc,x_n)$
	gives an automorphism $f: \mathbb{C}(x_1,\dotsc,x_n) \to
	\mathbb{C}(x_1,\dotsc,x_n)$ defined by $f(x_i) = y_i$.

	This $f$ then corresponds to $\phi$, so $f(\textbf{x})$ is a cluster and it remains
	to show that $B(f(\textbf{x})) = \pm B = \pm B(\textbf{x})$, however this follows from
	Proposition~\ref{prop:mexchmatrix} so $f\in \operatorname{Aut}\mathcal{A}$.
\end{proof}

\section{Unfoldings}\label{sec:unf}

Many skew-symmetrizable matrices $B$ have unfoldings to skew-symmetric matrices
$C$, which extend to seeds, where a given seed in $\mathcal{S}(B)$ unfolds to a seed in
$\mathcal{S}(C)$. The corresponding exchange graphs are related, with the marked
exchange graph $\widehat{\mathcal{E}}(B)$ embedding into the exchange graph $\mathcal{E}(C)$ provided
edges marked in certain ways split into multiple edges.

\begin{definition}[{\cite[Section 4]{FST-unfoldings}}]%
	Given a skew-symmetrizable $n \times n$ matrix $B = \left( b_{i,j} \right)$
	with symmetrizing matrix $D = \operatorname{diag}\left( d_i \right)$, let $ m = \sum_{j = 1}^{n} d_j$
	and partition the set $\lbrace 1,\dotsc, m \rbrace$ into $n$ disjoint
	consecutive index sets $E_i$ such that $\left| E_j \right| = d_j$ for all $j$.

	Construct a skew-symmetric $m \times m$ matrix $C$ where:
	\begin{enumerate}

		\item\label{enum:unfdefa} The sum of entries in each column of each $E_i
			\times E_j$ block equals $b_{i,j}$.

		\item\label{enum:unfdefb} If $b_{i,j} > 0$ then all entries in the $E_i
			\times E_j$ block are non-negative.

		\item\label{enum:unfdefc} All entries in each $E_i \times E_i$ block are
			zero.

	\end{enumerate}
	Given $i \in \lbrace 1, \dotsc,n \rbrace$ and any $j,k \in E_i$ the
	corresponding mutations $\mu_j$ and $\mu_k$ commute.  The $i$-th
	\textit{composite mutation} $\widetilde{\mu}_i$ of $C$ is given by
	\[ \widetilde{\mu_i} = \prod_{j \in E_i} \mu_j. \]%

	The matrix $C$ is the \textit{unfolding} of $B$ if the matrix $C' = C \cdot
	\left(\widetilde{\mu}_{k_1} \widetilde{\mu}_{k_2} \dotsb \widetilde{\mu}_{k_r}\right)$
	satisfies the conditions~\ref{enum:unfdefa} and~\ref{enum:unfdefb} above with
	respect to the matrix $B' = B \cdot \left( \mu_{k_1} \mu_{k_2} \dotsb
	\mu_{k_r} \right)$ for any sequence of mutations $\mu_{k_i}$ with corresponding
	composite mutations $\widetilde{\mu}_{k_i}$.
\end{definition}

A labelled seed $\left( [\beta_i], B \right)$, with skew-symmetrizable matrix
$B$, unfolds in the same way to $\left( [\gamma_i], C \right)$ where $C$ is the
unfolding of $B$. The $j$-th row in $B$ corresponds to the cluster variable
$\beta_j$ and this row unfolds to $d_j$ rows in $C$, hence $\beta_j$ unfolds to
$d_j$ cluster variables $\lbrace \gamma_{j_1}, \dotsc, \gamma_{j_{d_j}}
\rbrace$.

\begin{remark}
	A diagram has a finite number of distinct matrix representations, each of
	which may give different unfoldings, or may not admit any unfolding. Almost
	all mutation-finite matrices have an unfolding.
\end{remark}

\begin{definition}
	Given a permutation $\sigma \in \operatorname{Sym}(n)$ of the initial seed, construct the
	\textit{composite permutation} $\widetilde{\sigma} \in \operatorname{Sym}(m)$ to be the
	permutation given by:
	\begin{center}\includegraphics{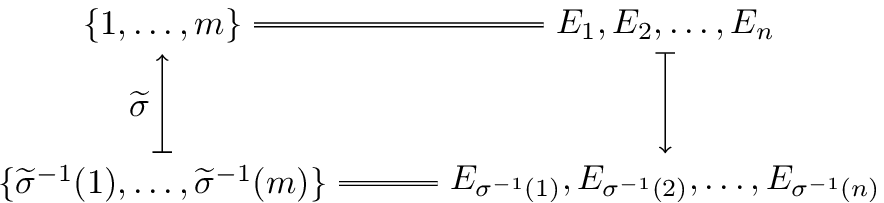}\end{center}
\end{definition}

\begin{theorem}\label{thm:autsintounf}
	Given a skew-symmetrizable matrix $B$ which unfolds to a matrix $C$, with
	corresponding marked exchange graphs $\widehat{\mathcal{E}}(B)$ and $\mathcal{E}(C) = \widehat{\mathcal{E}}(C)$,
	then
	\[ \operatorname{Aut} \widehat{\mathcal{E}}(B) \hookrightarrow \operatorname{Aut} \mathcal{E}(C). \]%
\end{theorem}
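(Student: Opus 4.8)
My plan is to funnel the embedding through the two identifications of automorphism groups with cluster automorphism groups established above. Since $B$ is mutation-finite skew-symmetrizable, Theorem~\ref{thm:mexchaut} identifies $\operatorname{Aut}\widehat{\mathcal{E}}(B)$ with $\operatorname{Aut}\mathcal{A}(B)$; since $C$ is skew-symmetric and mutation-finite we have $\mathcal{E}(C) = \widehat{\mathcal{E}}(C)$, and Corollary~\ref{cor:mfquiv-exchaut} identifies $\operatorname{Aut}\mathcal{E}(C)$ with $\operatorname{Aut}\mathcal{A}(C)$. It therefore suffices to produce an injective group homomorphism $\operatorname{Aut}\mathcal{A}(B)\hookrightarrow\operatorname{Aut}\mathcal{A}(C)$ coming from the unfolding, compatible with these identifications.

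The backbone of the construction is a group homomorphism $\Phi:M_n\to M_m$ defined on generators by $\Phi(\mu_i)=\widetilde{\mu}_i=\prod_{j\in E_i}\mu_j$ and $\Phi(\sigma)=\widetilde{\sigma}$. First I would check that $\Phi$ respects the presentation of $M_n$: within a block $E_i$ the mutations commute and are involutions, so $\widetilde{\mu}_i^2=1$; and because the composite permutation $\widetilde{\sigma}$ carries the block $E_i$ onto $E_{\sigma(i)}$, conjugating $\widetilde{\mu}_i$ by $\widetilde{\sigma}$ yields $\widetilde{\mu}_{\sigma(i)}$, matching the relation $\mu_i\sigma=\sigma\mu_{\sigma(i)}$. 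Hence $\Phi$ is a well-defined homomorphism.

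Next I would transport an automorphism across $\Phi$. Write $u_0=(\textbf{x},B)$ for the base labelled seed and $\widetilde{u}_0=(\widetilde{\textbf{x}},C)$ for its unfolding. Via Theorem~\ref{thm:mnexch} a cluster automorphism $f\in\operatorname{Aut}\mathcal{A}(B)$ corresponds to some $\phi\in\operatorname{Aut}_{M_n}(\mathcal{S}^0(B))$, and writing $\phi(u_0)=u_0\cdot g_f$ the matrix component of $u_0\cdot g_f$ is $\pm B$. I would then declare the image to be the unique $M_m$-equivariant map $\widetilde{\phi}$ with $\widetilde{\phi}(\widetilde{u}_0)=\widetilde{u}_0\cdot\Phi(g_f)$. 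By Lemma~\ref{lem:clautmatrix} this is a cluster automorphism precisely when the matrix of $\widetilde{u}_0\cdot\Phi(g_f)$ equals $\pm C$; granting this, $\widetilde{\phi}$ defines a cluster automorphism $\widetilde{f}$ of $\mathcal{A}(C)$ sending each unfolded variable $\gamma_{j_\ell}$ to the corresponding unfolded image of $f(\beta_j)$. Since $\Phi$ is a homomorphism, $g_{f_1 f_2}=g_{f_1}g_{f_2}$ is sent to $\Phi(g_{f_1})\Phi(g_{f_2})$, giving $\widetilde{f_1 f_2}=\widetilde{f_1}\,\widetilde{f_2}$, so $f\mapsto\widetilde{f}$ is a group homomorphism. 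For injectivity I would fold back: each $\beta_j$ is recovered from its unfolded variables $\gamma_{j_1},\dots,\gamma_{j_{d_j}}$, so if $\widetilde{f}$ fixes the unfolded seed then $f$ fixes $u_0$ and is the identity.

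The step I expect to be the main obstacle is verifying that the unfolded word acts on $C$ as faithfully as $g_f$ acts on $B$, which splits into two facets. First, a matrix-rigidity statement: whenever the matrix of $u_0\cdot g_f$ is $\pm B$, the composite sequence must satisfy that the matrix of $\widetilde{u}_0\cdot\Phi(g_f)$ is $\pm C$, so that $\widetilde{f}$ is genuinely a cluster automorphism rather than a map to some other unfolding of $\pm B$; this is exactly where conditions~\ref{enum:unfdefa} and~\ref{enum:unfdefb}, imposed along every mutation word, must be invoked. Second, a stabiliser correspondence: since $g_f$ is determined only modulo $\operatorname{Stab}_{M_n}(u_0)$, I must show that $\Phi$ maps $\operatorname{Stab}_{M_n}(u_0)$ into $\operatorname{Stab}_{M_m}(\widetilde{u}_0)$ for well-definedness, and conversely that $\Phi(g_f)\in\operatorname{Stab}_{M_m}(\widetilde{u}_0)$ forces $g_f\in\operatorname{Stab}_{M_n}(u_0)$ for injectivity. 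Both are aspects of the unfolding being a faithful block-by-block covering of mutation classes, and must be drawn from the full strength of the unfolding axioms rather than from a single seed, since the unfolding map need not be surjective onto the seeds of $C$. Once this compatibility is established, the relation check for $\Phi$, the homomorphism property, and the passage back through Theorem~\ref{thm:mexchaut} and Corollary~\ref{cor:mfquiv-exchaut} are all formal.
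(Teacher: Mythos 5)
Your proposal follows essentially the same route as the paper: identify $\operatorname{Aut}\widehat{\mathcal{E}}(B)$ with cluster/mutation-class automorphisms, realise such an automorphism as a word $\mu_{k_1}\dotsb\mu_{k_r}\sigma\in M_n$, replace it by the composite word $\widetilde{\mu}_{k_1}\dotsb\widetilde{\mu}_{k_r}\widetilde{\sigma}\in M_m$, and use the fact that this unfolded word returns $C$ to $\pm C$ to obtain a cluster automorphism of $\mathcal{A}(C)$ and hence an automorphism of $\mathcal{E}(C)$. The step you flag as the main obstacle --- that the unfolding of the seed with matrix $\pm B$ has matrix $\pm C$ --- is precisely the point the paper's proof also rests on (asserted from the unfolding axioms rather than argued in detail), while your extra verifications of the homomorphism property, well-definedness modulo stabilisers, and injectivity are refinements the paper leaves implicit.
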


\begin{proof}
	Choose an initial $n \times n$ labelled seed $u = \left( [\beta_i], B \right)$
	which unfolds to the $m \times m$ labelled seed
	$\left( [\gamma_i], C \right)$ with index sets $E_k$ for $k = 1, \dotsc,n$ and
	$\beta_i \leadsto \lbrace \gamma_j \rbrace_{j \in E_i}$.

	Let $\phi \in \operatorname{Aut} \widehat{\mathcal{E}}(B)$ be an exchange graph automorphism, then $\phi$
	corresponds to both a cluster automorphism $f \in \operatorname{Aut} \mathcal{A}_B$ of the cluster
	algebra $\mathcal{A}_B$, constructed from the initial seed $u$, and to a mutation class
	automorphism $\phi_M \in \operatorname{Aut}_{M_n}\mathcal{S}^0(B)$. This mutation class automorphism
	in turn corresponds to an element of $M_n$, so there is a sequence of $r$
	mutations $\mu_{k_i}$ and a permutation $\sigma$ such that
	\[ \phi_M(u) = u \cdot \left( \mu_{k_1} \mu_{k_2} \dotsb \mu_{k_r} \sigma
	\right). \]%
	All such automorphisms are constructed to have the same action on the initial
	seed $u$, so
	\[ \phi(u) = \left( \left[ \widebar{\beta}_i \right], \pm B \right) = \left(
	\left[ f(\beta_i) \right], \pm B \right) = \phi_M(u) = u \cdot \left(
\mu_{k_1} \mu_{k_2} \dotsb \mu_{k_r} \sigma \right). \]%

	In the unfolding, each mutation $\mu_{k_i}$ corresponds to the composite
	mutation $\widetilde{\mu}_{k_i}$ and the permutation $\sigma$ corresponds to
	the composite permutation $\widetilde{\sigma}$, so the following commutes:
	\begin{center}\includegraphics{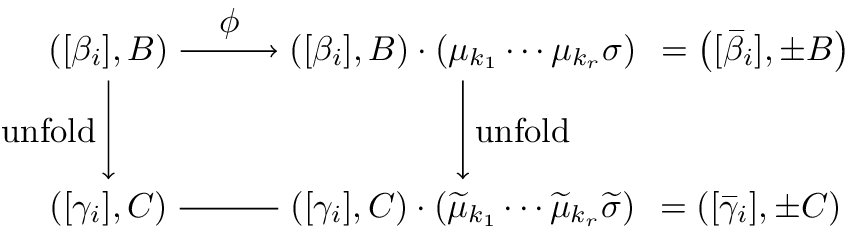}\end{center}
	The automorphism $\phi$ corresponds to a cluster automorphism, so the matrix
	of the image of $u$ is $\pm B$. The seed $\phi(u) = \left(
	\left[\widebar{\beta}_i \right],\pm B \right)$ unfolds to $\left( \left[
	\widebar{\gamma}_i \right], \pm C \right)$ and hence $\left(
	\widetilde{\mu}_{k_1} \dotsb \widetilde{\mu}_{k_r} \widetilde{\sigma} \right)
	\in M_m$ acts on $\left( [\gamma_i], C \right)$ to give a seed with the same
	matrix up to sign, so corresponds to a cluster automorphism of the cluster
	algebra constructed with $\left( [\gamma_i], C \right)$ as the initial seed,
	and hence to an automorphism of the exchange graph $\mathcal{E}(C)$.
\end{proof}

\begin{corollary}
	By Theorem~\ref{thm:mexchaut} the marked exchange graph automorphisms
	correspond to cluster automorphisms, so for a skew-symmetrizable matrix $B$
	which unfolds to $C$ and with corresponding cluster algebras $\mathcal{A}_B$ and
	$\mathcal{A}_C$, Theorem~\ref{thm:autsintounf} implies
	\[ \operatorname{Aut}\mathcal{A}_B \hookrightarrow \operatorname{Aut}\mathcal{A}_C. \]%
\end{corollary}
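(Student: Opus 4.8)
The plan is to obtain the embedding by simply composing the identifications already established, with no new combinatorial work required. Concretely, I would assemble the chain
\[ \operatorname{Aut}\mathcal{A}_B = \operatorname{Aut}\widehat{\mathcal{E}}(B) \hookrightarrow \operatorname{Aut}\mathcal{E}(C) \cong \operatorname{Aut}\mathcal{A}_C, \]
where the first equality is the exchange-graph description of the cluster automorphism group of $\mathcal{A}_B$, the middle arrow is the injection of Theorem~\ref{thm:autsintounf}, and the final isomorphism is the corresponding description for the skew-symmetric matrix $C$.

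First I would invoke Theorem~\ref{thm:mexchaut}, which requires $B$ to be mutation-finite (an assumption implicit in the hypothesis that $B$ admits an unfolding in the present setting), to identify $\operatorname{Aut}\mathcal{A}_B$ with $\operatorname{Aut}\widehat{\mathcal{E}}(B)$. Next I would apply Theorem~\ref{thm:autsintounf} to obtain the injection $\operatorname{Aut}\widehat{\mathcal{E}}(B)\hookrightarrow\operatorname{Aut}\mathcal{E}(C)$; since that theorem is proved by exhibiting, for each $\phi$, a genuine element $\widetilde{\mu}_{k_1}\dotsb\widetilde{\mu}_{k_r}\widetilde{\sigma}\in M_m$ acting on the unfolded seed $\left([\gamma_i],C\right)$ with matrix $\pm C$, its image already consists of automorphisms realised by cluster automorphisms of $\mathcal{A}_C$. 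Finally, because $C$ is skew-symmetric its marked and unmarked exchange graphs coincide ($\mathcal{E}(C)=\widehat{\mathcal{E}}(C)$, as noted in the hypotheses of Theorem~\ref{thm:autsintounf}), so I would close the chain with Corollary~\ref{cor:mfquiv-exchaut} to get $\operatorname{Aut}\mathcal{E}(C)\cong\operatorname{Aut}\mathcal{A}_C$.

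The one point that needs care, and which I expect to be the main obstacle, is verifying that $C$ is itself mutation-finite, so that Corollary~\ref{cor:mfquiv-exchaut} is applicable to it as a quiver. This is a standard property of unfoldings: the unfolding of a mutation-finite skew-symmetrizable matrix is again mutation-finite, which I would cite from the theory of unfoldings rather than reprove here. Once mutation-finiteness of $C$ is in place, the three displayed maps are all group homomorphisms and the two outer ones are isomorphisms, so their composition is an injective homomorphism $\operatorname{Aut}\mathcal{A}_B\hookrightarrow\operatorname{Aut}\mathcal{A}_C$, which is precisely the claim. The residual checks — that the composition respects the group operation and that injectivity is preserved under composition with isomorphisms — are routine and need no separate argument.
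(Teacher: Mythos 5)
Your proposal is correct and follows essentially the same route as the paper: the corollary is obtained exactly by composing $\operatorname{Aut}\mathcal{A}_B = \operatorname{Aut}\widehat{\mathcal{E}}(B) \hookrightarrow \operatorname{Aut}\mathcal{E}(C) \cong \operatorname{Aut}\mathcal{A}_C$ via Theorem~\ref{thm:mexchaut}, Theorem~\ref{thm:autsintounf}, and Corollary~\ref{cor:mfquiv-exchaut}. Your extra remark on the mutation-finiteness of $C$ is a sensible point of care that the paper leaves implicit.
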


\begin{figure}
	\centering%
	\includegraphics{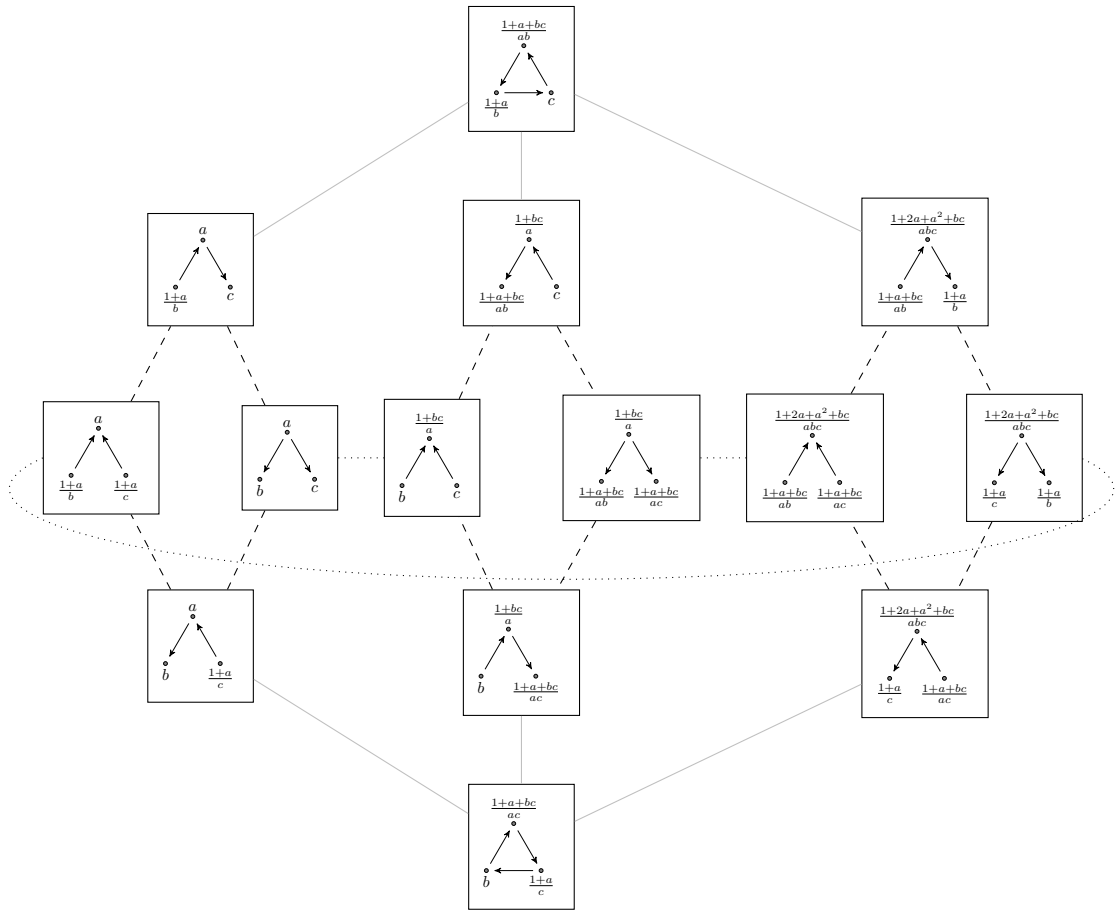}
	\caption{Exchange graph of the mutation class of type $A_3$. The dotted and
		dashed edges show how the marked exchange graph of type $B_2$ shown in
		Figure~\ref{fig:b2-marked} unfolds. A dashed edge in
		Figure~\ref{fig:b2-marked} corresponds to the composite mutation denoted by a
		consecutive pair of dashed edges in this figure.}
	\label{fig:a3-exch}
\end{figure}

\begin{example}
	The matrix $B$ representing the Dynkin diagram of type $B_2$
	\[ B = \begin{pmatrix} 0 & 1 \\ -2 & 0 \end{pmatrix} \quad \textrm{unfolds to}
	\quad C = \begin{pmatrix} 0 & 1 & 1 \\ -1 & 0 & 0 \\ -1 & 0 & 0 \end{pmatrix},
	\]%
	the matrix representing a quiver of Dynkin type $A_3$. The symmetrizing matrix
	of $B$ is given by $D = \operatorname{diag}\left( 1,2 \right)$ so the $B_2$ marked exchange graph shown
	in Figure~\ref{fig:b2-marked} embeds into the exchange graph of type $A_3$
	shown in Figure~\ref{fig:a3-exch}. The dashed edges in
	Figure~\ref{fig:b2-marked} correspond to the pairs of dashed edges
	representing composite mutations in Figure~\ref{fig:a3-exch}. Dotted edges in
	Figure~\ref{fig:b2-marked} correspond to single dotted edges in
	Figure~\ref{fig:a3-exch}.

	The seed $\left( [x,y], B \right)$ unfolds to the seed $\left( [a,b,c], C
	\right)$ and the cluster variables of these two seeds are related with
	\[ x \leadsto a, \quad y \leadsto \lbrace b,c \rbrace \]%
	as the symmetrizing matrix $\operatorname{diag}\left( 1,2 \right)$ ensures that $y$ unfolds to two
	cluster variables.

	The automorphism $\phi \in \operatorname{Aut}\widehat{\mathcal{E}}(B_2)$ given by rotation by
	$\frac{2\pi}{3}$ takes the seed $[x,y]$ to
	$\left[\frac{1+y^2}{x},\frac{1+x+y^2}{xy}\right]$ and corresponds to the
	cluster automorphism $f \in \operatorname{Aut}\mathcal{A}(B_2)$ given by
	\[ f(x) = \frac{1+y^2}{x}, \quad f(y)	= \frac{1+x+y^2}{xy}. \]%

	This automorphism induces an automorphism of the exchange graph of $A_3$ given
	by a rotation along the embedded $\widehat{\mathcal{E}}(B_2)$ fixing the seeds with cyclic
	quivers and takes $\left[ a,b,c \right]$ to $\left[ \frac{1+bc}{a},
	\frac{1+a+bc}{ab}, \frac{1+a+bc}{ac} \right]$ which corresponds to the cluster
	automorphism $g\in \operatorname{Aut}\mathcal{A}(A_3)$ given by
	\[ g(a) = \frac{1+bc}{a}, \quad g(b) = \frac{1+a+bc}{ab}, \quad g(c) =
	\frac{1+a+bc}{ac}. \]%

	However the automorphism could also correspond to the cluster automorphism
	$\tilde{g}\in \operatorname{Aut}\mathcal{A}(A_3)$ where
	\[ \tilde{g}(a) = g(a), \quad \tilde{g}(b) =	\frac{1+a+bc}{ac}, \quad
	\tilde{g}(c) = \frac{1+a+bc}{ab}. \]%

	There is a single non-identity $\mathcal{E}(A_3)$ exchange graph automorphism which
	fixes the embedded $\widehat{\mathcal{E}}(B_2)$, given by a reflection in the circle of the
	embedded subgraph and interchanging the two seeds with cyclic quivers. This
	then corresponds to the cluster automorphism $h \in \operatorname{Aut}\mathcal{A}(A_3)$ given by
	\[ h(a) = a, \quad h(b) = c, \quad h(c) = b \]%
	such that $\tilde{g} = g \circ h = h \circ g$.

\end{example}

Theorem~\ref{thm:autsintounf} shows that cluster automorphisms of $\mathcal{A}_B$
commute with unfolding the seeds, so a direct cluster automorphism
$\phi\in\operatorname{Aut}\mathcal{A}_B$ preserves the exchange matrix $B$, which when unfolded to
$\psi\in\operatorname{Aut}\mathcal{A}_C$ must also preserve the exchange matrix $C$ and so
is also a direct cluster automorphism.

\begin{corollary}\label{cor:direct_subgroup}
	$\operatorname{Aut}^+\mathcal{A}_B \hookrightarrow \operatorname{Aut}^+\mathcal{A}_C$.
\end{corollary}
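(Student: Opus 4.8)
The plan is to show that the injection $\operatorname{Aut}\mathcal{A}_B \hookrightarrow \operatorname{Aut}\mathcal{A}_C$ established above restricts to the subgroups of direct automorphisms, by tracking the sign of the exchange matrix through the unfolding construction used in Theorem~\ref{thm:autsintounf}. I would fix an initial labelled seed $u = ([\beta_i], B)$ unfolding to $([\gamma_i], C)$ and take a direct cluster automorphism $\phi \in \operatorname{Aut}^+\mathcal{A}_B$. Since $\phi$ fixes every exchange matrix, in the notation of the proof of Theorem~\ref{thm:autsintounf} its action on the initial seed is $\phi(u) = ([\widebar{\beta}_i], B)$, with the matrix $B$ itself and not $-B$; that is, the ambiguous sign written $\pm$ there is pinned to $+$.

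Next I would feed this into the unfolding step of that same proof. The seed $([\widebar{\beta}_i], B)$ unfolds to $([\widebar{\gamma}_i], C)$, again with the $+$ sign, so the element $(\widetilde{\mu}_{k_1} \dotsb \widetilde{\mu}_{k_r} \widetilde{\sigma}) \in M_m$ carries $([\gamma_i], C)$ to a seed whose matrix is exactly $C$. Hence the induced cluster automorphism $\psi \in \operatorname{Aut}\mathcal{A}_C$ fixes $C$ and is therefore direct, i.e. $\psi \in \operatorname{Aut}^+\mathcal{A}_C$. As $\phi \mapsto \psi$ is the restriction of the already-injective map $\operatorname{Aut}\mathcal{A}_B \hookrightarrow \operatorname{Aut}\mathcal{A}_C$, it is itself injective, which gives $\operatorname{Aut}^+\mathcal{A}_B \hookrightarrow \operatorname{Aut}^+\mathcal{A}_C$.

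The one point needing care, and the step I expect to be the main obstacle, is justifying that $B$ (and not $-B$) unfolds to $C$ (and not $-C$), i.e. that unfolding is compatible with negation. This reduces to checking that the three defining conditions of an unfolding survive simultaneously negating $B$ and $C$: condition~\ref{enum:unfdefa} is linear in the entries and so merely changes sign on both sides, condition~\ref{enum:unfdefc} is untouched, and condition~\ref{enum:unfdefb} is preserved because, using the skew-symmetry of $C$ together with $b_{i,j} > 0 \iff b_{j,i} < 0$, negating $C$ sends each non-negative block to a non-positive one in exactly the way required by the negated matrix $-B$. Thus $-C$ is the unfolding of $-B$, the two possible signs propagate in lockstep through the commuting square of Theorem~\ref{thm:autsintounf}, and Corollary~\ref{cor:direct_subgroup} follows by selecting the $+$ branch throughout.
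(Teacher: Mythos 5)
Your proposal is correct and follows essentially the same route as the paper, which likewise observes that a direct automorphism pins the sign to $+B$ and that the unfolding then forces the induced automorphism to preserve $C$. Your extra verification that $-C$ is the unfolding of $-B$ is a detail the paper leaves implicit, but it does not change the argument.
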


\section{Mapping class groups}\label{sec:mcg}

In their paper introducing cluster automorphisms~\cite{ASS-auts} Assem,
Schiffler and Shramchenko introduced the tagged mapping class group
for surfaces with punctures. This group has been shown to coincide with the
group of direct cluster automorphisms of the surface's corresponding cluster
algebra.

\begin{definition}
	Given a surface with marked points $(S,M)$ the \textit{mapping class group} of
	the surface is given by
	\[ \operatorname{MCG}(S,M) = \mathchoice{\left.\raisebox{0.5ex}{$\operatorname{Homeo}^+(S,M)$}\middle/\raisebox{-1ex}{$\operatorname{Homeo}^0(S,M)$}\right.}{\left.\operatorname{Homeo}^+(S,M)\middle/\operatorname{Homeo}^0(S,M)\right.}{\operatorname{Homeo}^+(S,M)/\operatorname{Homeo}^+(S,M)}{\operatorname{Homeo}^+(S,M)/\operatorname{Homeo}^0(S,M)}. \]%
	Here $\operatorname{Homeo}^+(S,M)$ is the group of orientation-preserving homeomorphisms
	from $S$ to itself which sends the set $M$ to itself, but does not necessarily
	fix $M$ nor the boundary of $S$ pointwise, and $\operatorname{Homeo}^0(S,M)$ is the subgroup of
	homeomorphisms which are isotopic to the identity such that the isotopy fixes
	$M$ pointwise.
\end{definition}

The cluster structure given by triangulations of a surface with marked points
was first studied by Fomin, Shapiro and Thurston in~\cite{FST-tri}, where they
show that flips of arcs in a triangulation coincide with mutations. However such
a triangulation could contain self-folded triangles, and therefore arcs that
cannot be flipped; to get around this problem, the authors introduced taggings
on the arcs. A \textit{tagged arc} is an arc which does not cut out a
once-punctured monogon, where the enpoints are tagged either \textit{plain} or
\textit{notched}, such that any endpoints on $\partial S$ are tagged plain and if
the endpoints of an arc coincide then they must be tagged the same.

Two tagged arcs are \textit{compatible} if either their underlying arcs are the
same and then at least one endpoint must be tagged in the same way, or the
underlying arcs are not equal but are compatible. In this case, if they share an
endpoint, the arcs must be tagged in the same way at that endpoint. A tagged
triangulation is a maximal collection of compatible tagged arcs and a tagged
flip is then defined in the same way as for triangulations, where a tagged arc
is replaced with the unique other compatible tagged arc and these flips again
correspond to mutations. See~\cite[Section 7]{FST-tri} or~\cite[Section
4]{ASS-auts} for more details.

\begin{definition}
	The \textit{tagged mapping class group} of a surface $(S,M)$ with $p$
	punctures is the semidirect product of the standard mapping class group of the
	surface with $\mathbb{Z}_2^p$,
	\[ \operatorname{MCG}_{\bowtie}(S,M) = \mathbb{Z}_2^p \rtimes \operatorname{MCG}(S,M), \]%
	where the elements of $\operatorname{MCG}(S,M)$ act as diffeomorphisms on the surface and
	elements of $\mathbb{Z}_2^p$ switch or preserve the tags on the tagged
	triangulation at each puncture.
\end{definition}

\begin{theorem}[{\cite[Theorem 4.11]{ASS-auts}}]\label{thm:mcginj}
	Let $(S,M)$ be a surface with $p$ punctures, with corresponding cluster
	algebra $\mathcal{A}$, then
	\begin{enumerate}
		\item $\operatorname{MCG}(S)$ is isomorphic to a subgroup of $\operatorname{Aut}^+\mathcal{A}$.
		\item If $p \geq 2$ or $\partial S \not = \emptyset$ then
			$\operatorname{MCG}_{\bowtie}(S)$ is isomorphic to a subgroup of $\operatorname{Aut}^+\mathcal{A}$.
	\end{enumerate}
\end{theorem}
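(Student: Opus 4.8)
The plan is to build the embedding topologically, using the dictionary between surface combinatorics and cluster combinatorics established by Fomin, Shapiro and Thurston in~\cite{FST-tri}: tagged arcs of $(S,M)$ correspond to cluster variables, tagged triangulations correspond to seeds, and (tagged) flips correspond to mutations. I would fix this correspondence once and then transport the action of homeomorphisms through it.

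For part (1), given $\gamma \in \operatorname{MCG}(S,M)$ represented by an orientation-preserving homeomorphism $h$, the map $h$ carries each arc $a$ to an arc $h(a)$ and hence induces a bijection on cluster variables, which extends uniquely to a $\mathbb{K}$-automorphism $f_\gamma$ of $\mathcal{A}$. First I would check well-definedness: isotopic homeomorphisms send an arc to isotopic arcs and so to the same cluster variable, so $f_\gamma$ depends only on the class $\gamma$. Since $h$ sends triangulations to triangulations, $f_\gamma(\textbf{x})$ is again a cluster; and since $h$ commutes with flipping (the flip of an image is the image of the flip), $f_\gamma$ is compatible with mutations, so it is a cluster automorphism by Definition~\ref{def:claut}. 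The crucial point for landing in $\operatorname{Aut}^+\mathcal{A}$ rather than $\operatorname{Aut}\mathcal{A}$ is orientation: the quiver of a triangulation records the cyclic order of arcs around each triangle fixed by the orientation of $S$, so an orientation-preserving $h$ takes each exchange matrix to itself rather than its negative, and $f_\gamma$ is \emph{direct} by Lemma~\ref{lem:clautmatrix}. Functoriality of the induced action on arcs, $(h_1\circ h_2)$ acting as the composite, shows $\gamma\mapsto f_\gamma$ is a group homomorphism.

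The hard part will be injectivity, which is the only step that genuinely uses surface topology rather than the cluster formalism. If $f_\gamma$ is the identity then $h$ fixes every arc up to isotopy; by the faithfulness of the mapping class group action on the arc complex (the Alexander method), a mapping class fixing every isotopy class of arcs is trivial, so $\gamma$ is the identity. I would isolate this as the technical heart of the argument.

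For part (2) I would extend $f$ to the tagged setting, with the generator of the $i$-th $\mathbb{Z}_2$ factor acting by the tag-reversal at the puncture $p_i$; this operation is defined on tagged triangulations, respects tagged flips, and therefore again yields a direct cluster automorphism, so that $\operatorname{MCG}_{\bowtie}(S,M)=\mathbb{Z}_2^p\rtimes\operatorname{MCG}(S,M)$ maps to $\operatorname{Aut}^+\mathcal{A}$. The same faithfulness argument handles the $\operatorname{MCG}(S,M)$ part, and for the $\mathbb{Z}_2^p$ part one observes that reversing tags at a puncture changes the tagged triangulation and hence the cluster. The hypothesis $p \geq 2$ or $\partial S \neq \emptyset$ enters precisely here to guarantee injectivity of the $\mathbb{Z}_2^p$ factor: it excludes the degenerate case of a single puncture with empty boundary, where the global tag-change is already realised inside $\operatorname{MCG}(S,M)$ (equivalently acts trivially on $\mathcal{A}$ after relabelling), which would make the semidirect-product map fail to be injective.
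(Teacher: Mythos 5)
First, a caveat about the comparison: the paper does not prove this statement at all --- it is quoted verbatim from Assem--Schiffler--Shramchenko \cite[Theorem 4.11]{ASS-auts} and used as a black box --- so there is no in-paper proof to measure you against. Your outline does follow the same route as the cited source: transport the action of homeomorphisms through the Fomin--Shapiro--Thurston dictionary between tagged arcs and cluster variables, get directness from orientation-preservation, and prove injectivity from the action on arcs. Most of part (1) is a reasonable sketch of that argument.

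The genuine gap is in part (2), specifically in the sentence ``this operation is defined on tagged triangulations, respects tagged flips, and therefore again yields a direct cluster automorphism'' and in your account of where the hypothesis $p \geq 2$ or $\partial S \neq \emptyset$ enters. That hypothesis is not there to rescue \emph{injectivity} of the $\mathbb{Z}_2^p$ factor, and it is not true that for a once-punctured closed surface the global tag-change is ``already realised inside $\operatorname{MCG}(S,M)$'' or ``acts trivially on $\mathcal{A}$ after relabelling.'' What actually fails in the excluded case is \emph{well-definedness}: by Fomin--Shapiro--Thurston, the tagged arc complex of a closed surface with exactly one puncture is disconnected, with the plain and notched triangulations lying in different components, and the cluster algebra $\mathcal{A}$ realises only the component of the initial (plain) triangulation. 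Arcs notched at the unique puncture are therefore not cluster variables of $\mathcal{A}$ at all, so tag-reversal does not induce any map $\mathcal{A} \to \mathcal{A}$, let alone a non-injective one. Your argument silently assumes connectedness of the tagged arc complex; the hypothesis is exactly what guarantees it, and without saying so the step ``therefore again yields a direct cluster automorphism'' is unjustified. A secondary caution on part (1): you delegate injectivity to ``the Alexander method,'' but faithfulness of the mapping class group action on isotopy classes of arcs is precisely the point that needs case-by-case care for small surfaces (hyperelliptic-type involutions can act trivially on arc systems), so it cannot simply be waved through as a black box.
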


They showed that for discs and annuli without punctures as well as for certain
discs with 1 or 2 punctures then the tagged mapping class group is isomorphic to the
group of direct cluster automorphisms of the corresponding cluster algebra. The
authors conjectured that this would be the case for almost all surfaces with
marked points.
Br\"{u}stle and Qiu proved that this conjecture is true in~\cite{BQ-mcg}:

\begin{theorem}[{\cite[Theorem 4.7]{BQ-mcg}}]\label{thm:mcgiso}
	Let $(S,M)$ be a surface with marked points which is not
	\begin{enumerate}
			\item a once-punctured disc with 2 or 4 marked points on the boundary
			\item a twice-punctured disc with 2 marked points on the boundary
	\end{enumerate}
	then
	\[ \operatorname{MCG}_{\bowtie}(S,M) = \operatorname{Aut}^+\mathcal{A}. \]%
\end{theorem}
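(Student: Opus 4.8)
The plan is to prove the two inclusions separately; the forward inclusion is essentially already available, so the substance lies in surjectivity. By Theorem~\ref{thm:mcginj} the tagged mapping class group injects into $\operatorname{Aut}^+\mathcal{A}$ as soon as $(S,M)$ has nonempty boundary or at least two punctures, which covers every surface in the present statement, so I would first record the injection $\operatorname{MCG}_{\bowtie}(S,M) \hookrightarrow \operatorname{Aut}^+\mathcal{A}$ and reduce the theorem to showing that every direct cluster automorphism is realised by a tagged mapping class. The two surfaces excluded here are precisely the small cases where the reconstruction below breaks down and extra exchange graph symmetries appear, so they must be set aside from the outset.

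Next I would translate the problem into the language of exchange graphs. Surface cluster algebras are mutation-finite, so by Corollary~\ref{cor:mfquiv-exchaut} the group $\operatorname{Aut}\mathcal{A}$ is identified with $\operatorname{Aut}\mathcal{E}_\mathcal{A}$, and the direct automorphisms $\operatorname{Aut}^+\mathcal{A}$ correspond to those exchange graph automorphisms which send each quiver to itself rather than to its opposite (this is the orientation bookkeeping of Proposition~\ref{prop:exchdiagram} in the skew-symmetric case). By the theory of Fomin, Shapiro and Thurston~\cite{FST-tri} the exchange graph $\mathcal{E}_\mathcal{A}$ is canonically isomorphic to the flip graph of tagged triangulations of $(S,M)$, with seeds corresponding to tagged triangulations and edges to tagged flips. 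Dualising, an automorphism of this flip graph is the same datum as a simplicial automorphism of the tagged arc complex of $(S,M)$, the complex whose vertices are tagged arcs and whose maximal simplices are tagged triangulations.

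The heart of the argument is then a rigidity statement of Ivanov type: for every surface with marked points outside the listed exceptions, every simplicial automorphism of the tagged arc complex is induced by an element of $\operatorname{MCG}_{\bowtie}(S,M)$. I would establish this by first recovering the underlying untagged arc complex and the combinatorics of triangulations from the abstract complex, using that the links of vertices and the local flip structure detect when two arcs share an endpoint and when a triangle is self-folded; this reconstructs the surface topology and lets one lift the automorphism to a homeomorphism up to isotopy, while the $\mathbb{Z}_2^p$ factor accounts for the choices of tagging at each puncture. The main obstacle is exactly this step: controlling the behaviour at punctures — separating plain from notched taggings, handling self-folded triangles, and verifying that the two excluded surfaces are the only ones whose arc complex admits an exotic automorphism not coming from the mapping class group. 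Finally I would check that the mapping class $h$ produced this way maps to the given $f$ under the injection of the first paragraph, which is immediate once both are seen to induce the same permutation of tagged triangulations, yielding surjectivity and hence the equality $\operatorname{MCG}_{\bowtie}(S,M) = \operatorname{Aut}^+\mathcal{A}$.
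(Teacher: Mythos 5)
Your opening paragraph matches the paper exactly: this is a cited theorem of Br\"ustle and Qiu, and the paper's discussion reduces it, via Theorem~\ref{thm:mcginj}, to showing that the injection $\operatorname{MCG}_{\bowtie}(S,M)\hookrightarrow\operatorname{Aut}^+\mathcal{A}$ is surjective. Where you diverge is in how surjectivity is obtained. The paper deduces it in essentially one step from Proposition 8.5 of Bridgeland and Smith~\cite{BS-tri}: a direct cluster automorphism $f$ sends the initial tagged triangulation $T$ to a tagged triangulation $f(T)$ whose associated quiver is isomorphic to that of $T$ (indeed equal, since $f$ is direct), so by that proposition there is an element of $\operatorname{MCG}_{\bowtie}(S,M)$ carrying $T$ to $f(T)$, and one checks it induces $f$ because a cluster automorphism is determined by its effect on a single seed. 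You instead propose to pass through $\operatorname{Aut}\mathcal{E}_\mathcal{A}$, dualise to the tagged arc complex, and prove an Ivanov-type rigidity theorem: every simplicial automorphism of the tagged arc complex is geometric outside the two exceptional surfaces.

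The gap is that this rigidity theorem \emph{is} the entire content of the statement, and your proposal only sketches it (``recover the untagged arc complex from links and the local flip structure, handle self-folded triangles, verify the exceptions are the only exotic cases''). Each of those clauses hides a substantial argument of Irmak--McCarthy/Korkmaz--Papadopoulos type, extended to tagged arcs, and you yourself identify it as ``the main obstacle'' without resolving it; in particular the claim that the two excluded surfaces are exactly the ones with exotic symmetries is asserted, not derived. Your route is also strictly stronger than what is needed: rigidity must control an arbitrary automorphism of the whole complex, whereas the Bridgeland--Smith route only has to compare two specific tagged triangulations with isomorphic quivers. If you want a complete proof along the lines available in this paper, replace the rigidity step by an appeal to Proposition 8.5 of~\cite{BS-tri} (noting that its extra exceptional surfaces --- small spheres and discs --- carry no cluster algebra or are handled directly), and then verify that the resulting mapping class induces $f$ on cluster variables, which follows since both agree on the initial seed.
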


Theorem~\ref{thm:mcginj} shows that $\operatorname{MCG}_{\bowtie}(S,M) \hookrightarrow \operatorname{Aut}^+\mathcal{A}$, so
the proof of Theorem~\ref{thm:mcgiso} needs to show that this injection is
surjective. This follows from the result below proved by Bridgeland and Smith:

\begin{proposition}[{\cite[Prop. 8.5]{BS-tri}}]%
	Suppose $(S,M)$ is a surface which is not one of:
	\begin{enumerate}%
		\item a sphere with $\leq 5$ marked points;
		\item an unpunctured disc with $\leq 3$ marked points on the boundary;
		\item a disc with a single puncture and one marked point on the boundary;
		\item a once-punctured disc with 2 or 4 marked points on the boundary;
		\item a twice-punctured disc with 2 marked points on the boundary,
	\end{enumerate}%
	then two tagged triangulations of $(S,M)$ differ by an element of
	$\operatorname{MCG}_{\bowtie}(S,M)$ if and only if the associated quivers are isomorphic.
\end{proposition}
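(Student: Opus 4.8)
The plan is to prove the two implications separately, with the forward direction routine and the converse carrying all the weight. For the forward implication, suppose $T$ and $T'$ differ by an element of $\operatorname{MCG}_{\bowtie}(S,M) = \mathbb{Z}_2^p \rtimes \operatorname{MCG}(S,M)$. The $\operatorname{MCG}(S,M)$ component is an orientation-preserving homeomorphism carrying $T$ to $T'$, and since the associated quiver is built from the signed adjacencies of arcs inside each triangle of the tagged triangulation~\cite{FST-tri}, any such homeomorphism transports this data and yields $Q_T \cong Q_{T'}$. The $\mathbb{Z}_2^p$ component switches tags at punctures, which is known to fix the associated quiver up to isomorphism, so the composite still induces an isomorphism. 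Thus the forward direction reduces to the observation that the quiver is a homeomorphism invariant of the tagged triangulation.

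For the converse the strategy is to reconstruct the triangulation from the quiver and then realise the reconstruction geometrically. First I would invoke the block (puzzle-piece) decomposition of quivers arising from surfaces~\cite{FST-mutfin}: $Q_T$ decomposes into blocks corresponding to the triangles of $T$, glued along their outlets exactly as the triangles are glued along shared arcs. A quiver isomorphism $Q_T \xrightarrow{\sim} Q_{T'}$ carries blocks to blocks and outlets to outlets, and so induces a bijection between the triangles of $T$ and $T'$ preserving the incidence of arcs --- that is, a simplicial isomorphism of the two ideal triangulations compatible with their orientations and marked points.

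Next I would apply the classical fact, in its ideal-triangulation form, that an orientation-compatible simplicial isomorphism between two triangulations of a surface with marked points is realised by an orientation-preserving homeomorphism carrying one to the other; this produces the required element of $\operatorname{MCG}(S,M)$. The only data the quiver does not record is the tagging at each puncture, and this is absorbed by the $\mathbb{Z}_2^p$ factor, upgrading the homeomorphism to a genuine tagged mapping class sending $T$ to $T'$.

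The main obstacle is that the block decomposition is \emph{not} canonical: a single quiver may admit several inequivalent decompositions, and where it does the induced map on triangles need not be well defined, or the quiver may carry automorphisms with no geometric counterpart. The crux of the argument is therefore to confine these pathologies to the surfaces listed in the statement. I would settle this by a finite case analysis of the degenerate configurations --- the small spheres, the unpunctured discs with few boundary marked points, and the once- and twice-punctured discs --- checking in each case whether a non-unique decomposition or an unrealised quiver automorphism arises, and confirming that for every surface outside the enumerated list the reconstruction above is forced and unambiguous.
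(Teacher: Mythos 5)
The paper does not prove this proposition: it is imported verbatim from Bridgeland and Smith~\cite{BS-tri} (their Proposition~8.5) and used as a black box in the discussion of Theorem~\ref{thm:mcgiso}, so there is no in-paper proof to compare against. Measured against the argument in the cited source, your outline has the right architecture --- the forward direction is indeed the routine observation that the signed-adjacency quiver is a homeomorphism invariant and is unchanged by simultaneous tag-switching at a puncture, and the converse does run through block (puzzle-piece) decompositions, realisation of a simplicial isomorphism of ideal triangulations by an orientation-preserving homeomorphism, and absorption of taggings into the $\mathbb{Z}_2^p$ factor.

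The genuine gap is in your last paragraph. To conclude that the reconstruction is ``forced and unambiguous for every surface outside the enumerated list,'' it does not suffice to examine the listed exceptional surfaces and observe that pathologies occur there; you must show that \emph{no other} surface admits a quiver with two inequivalent block decompositions or an unrealised quiver automorphism, which is a statement about the complement of the list and cannot be verified by a finite check of the list itself. This is the actual content of the proposition, and in~\cite{BS-tri} it is supplied by the classification of quivers with non-unique block decompositions going back to~\cite{FST-tri} and~\cite{FST-mutfin} (together with a separate treatment of the closed-sphere cases, which are excluded for reasons --- extra symmetry and coincidences of quivers --- not visible to a ``degenerate configuration'' scan). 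You also pass too quickly over the reduction from tagged to ideal triangulations: blocks correspond to triangles of an \emph{ideal} triangulation, self-folded triangles distort the ``blocks $=$ triangles'' dictionary, and one must first use the $\mathbb{Z}_2^p$ action to replace an arbitrary tagged triangulation by one coming from an ideal triangulation before the block argument applies. As written, the proposal is a correct road map but defers the load-bearing step to an unspecified and logically insufficient case analysis.
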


\subsection{Unfoldings and covering maps}

Diagrams correspond to triangulations of orbifolds in the same way that quivers
correspond to triangulations of surfaces. A covering of the orbifold by a surface
corresponds to an unfolding of the diagram to a quiver, in such a way that
composite mutations of the quiver correspond to triangle flips in the
triangulation of the surface, as discussed in~\cite{FST-orbifolds}.

In their paper on the growth rate of cluster algebras, Felikson, Shapiro, Thomas
and Tumarkin~\cite{FSTT-growth} defined the mapping class group of a cluster
algebra $\operatorname{MCG}(\mathcal{A})$ to be the elements of $M_n$ which fix the initial exchange
matrix up to a quotient by those elements of $M_n$ which fix the initial seed.
Elements of this group would then fix the initial exchange matrix and map the
initial cluster to some other cluster in the mutation class, and hence would
induce a direct cluster automorphism.

Fix a marked orbifold $\mathcal{O}$ with $m$ punctures. In~\cite[Remark
4.15]{FSTT-growth} the cluster mapping class group is argued to either contain
the orbifold's mapping class group as a proper normal subgroup with quotient
$\mathchoice{\left.\raisebox{0.5ex}{$\operatorname{MCG}(\mathcal{A})$}\middle/\raisebox{-1ex}{$\operatorname{MCG}(\mathcal{O})$}\right.}{\left.\operatorname{MCG}(\mathcal{A})\middle/\operatorname{MCG}(\mathcal{O})\right.}{\operatorname{MCG}(\mathcal{A})/\operatorname{MCG}(\mathcal{A})}{\operatorname{MCG}(\mathcal{A})/\operatorname{MCG}(\mathcal{O})} \cong \mathbb{Z}^{m}_{2}$ (when $m>1$, or when $m = 1$ and
the boundary non-empty) or be isomorphic to the orbifold mapping class group
(when $m=0$, or when $m=1$ and the boundary is empty).

The additional $\mathbb{Z}_2$ for each interior marked point corresponds to the
additional taggings in the definition of $\operatorname{MCG}_{\bowtie}(\mathcal{O})$ and so
suggests that the following would be true:

\begin{conj}\label{conj:orbmcg}
	For a cluster algebra $\mathcal{A}$ arising from the triangulation of an orbifold
	$\mathcal{O}$
	\[ \operatorname{MCG}_{\bowtie}(\mathcal{O}) \cong \operatorname{Aut}^+\mathcal{A}. \]%
\end{conj}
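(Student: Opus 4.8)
The plan is to prove the two inclusions separately, the injection being the orbifold analogue of Theorem~\ref{thm:mcginj} and the surjection being the genuinely new content. For the injection $\operatorname{MCG}_{\bowtie}(\mathcal{O}) \hookrightarrow \operatorname{Aut}^+\mathcal{A}$ I would mimic the surface argument of Assem, Schiffler and Shramchenko: a tagged triangulation of $\mathcal{O}$ determines a seed, tagged flips coincide with mutations, and an element of the mapping class group permutes tagged triangulations while sending flips to flips, hence induces a $\mathbb{K}$-automorphism taking clusters to clusters compatibly with mutation. Since such a homeomorphism carries the orbifold diagram to an isomorphic (not reversed) diagram, the induced automorphism fixes the exchange matrix and is therefore \emph{direct}. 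The extra $\mathbb{Z}_2^m$ factor acts by switching tags at the $m$ punctures, and tag-switching also fixes the diagram, so it too lands in $\operatorname{Aut}^+\mathcal{A}$; faithfulness is inherited from the faithful action on the (marked) exchange graph.

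For the reverse inclusion I would route through the combinatorial cluster mapping class group $\operatorname{MCG}(\mathcal{A})$ of Felikson, Shapiro, Thomas and Tumarkin~\cite{FSTT-growth}. First I would establish $\operatorname{Aut}^+\mathcal{A} \cong \operatorname{MCG}(\mathcal{A})$ using the machinery of this paper: by Theorem~\ref{thm:mexchaut} the direct cluster automorphisms are exactly the marked exchange graph automorphisms preserving the orientation of the diagram, and under the isomorphism of Theorem~\ref{thm:mnexch} these correspond to elements of $\operatorname{Aut}_{M_n}$, i.e.\ to cosets of those global mutation words fixing the initial exchange matrix modulo the stabiliser of the initial seed, which is precisely the definition of $\operatorname{MCG}(\mathcal{A})$. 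It then remains to identify $\operatorname{MCG}(\mathcal{A})$ with $\operatorname{MCG}_{\bowtie}(\mathcal{O})$, and here the argument of~\cite[Remark 4.15]{FSTT-growth} gives $\operatorname{MCG}(\mathcal{A}) \cong \mathbb{Z}_2^m \rtimes \operatorname{MCG}(\mathcal{O})$, the semidirect factor accounting exactly for the per-puncture tag changes defining $\operatorname{MCG}_{\bowtie}(\mathcal{O})$.

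The delicate point is that this last identification is stated as an argument rather than a complete proof, and what it really hides is the orbifold analogue of Br\"{u}stle--Qiu: that every combinatorial element of $\operatorname{MCG}(\mathcal{A})$ is realised by an actual homeomorphism of $\mathcal{O}$, equivalently that the injection of the first paragraph is surjective. To make this rigorous I would exploit the unfolding. Choosing an unfolding of the orbifold diagram $B$ to a surface quiver $C$ corresponds to a covering $S \to \mathcal{O}$; by Corollary~\ref{cor:direct_subgroup} a direct cluster automorphism of $\mathcal{A}_B$ unfolds to one of $\mathcal{A}_C$, which by Theorem~\ref{thm:mcgiso} is realised by a mapping class $g$ of $S$. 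One then shows $g$ commutes with the deck group $G$ of the covering, using that it is produced from the symmetric composite-mutation and composite-permutation data of Theorem~\ref{thm:autsintounf}, and applies a Birman--Hilden type theorem to descend the $G$-equivariant class $g$ to a genuine orbifold mapping class realising the original automorphism.

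I expect the main obstacle to be precisely this equivariant descent: proving that the surface mapping class obtained from unfolding is always $G$-equivariant and that equivariant classes biject with $\operatorname{MCG}_{\bowtie}(\mathcal{O})$, while correctly matching the tag $\mathbb{Z}_2$ factors at the orbifold punctures with those at their preimages on $S$. One must also separately treat the small exceptional orbifolds whose covers fall among the surfaces excluded from Theorem~\ref{thm:mcgiso}, dispatching these by direct computation of both sides. The Birman--Hilden step together with the bookkeeping of punctures versus order-$2$ orbifold points is where I anticipate essentially all of the difficulty to lie.
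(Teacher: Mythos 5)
The statement you are trying to prove is left as an open conjecture in the paper (Conjecture~\ref{conj:orbmcg}); the paper offers no proof, only the heuristic from~\cite[Remark 4.15]{FSTT-growth} and a single consistency check (the type $B_3$ orbifold, where both sides are computed to be $\mathbb{Z}_4$). So there is no proof of the author's to compare yours against, and your proposal should be judged as a research programme rather than as an alternative route to a known argument.

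As a programme it is the natural one, and you correctly locate the difficulty, but it does not close the conjecture. The injection $\operatorname{MCG}_{\bowtie}(\mathcal{O})\hookrightarrow\operatorname{Aut}^+\mathcal{A}$ and the identification $\operatorname{Aut}^+\mathcal{A}\cong\operatorname{MCG}(\mathcal{A})$ via Theorems~\ref{thm:mnexch} and~\ref{thm:mexchaut} are plausible and essentially formal. The genuine gap is the surjectivity step, and your unfolding strategy leaves several substantive points unproved: (i) not every mutation-finite skew-symmetrizable matrix admits an unfolding (the paper only says ``almost all''), and the unfolding depends on a choice of matrix representing the diagram, so the argument does not even get started uniformly; (ii) Corollary~\ref{cor:direct_subgroup} gives an injection $\operatorname{Aut}^+\mathcal{A}_B\hookrightarrow\operatorname{Aut}^+\mathcal{A}_C$, but you need to characterise its \emph{image} as exactly the deck-group-equivariant mapping classes of $S$, and nothing in Theorem~\ref{thm:autsintounf} produces a homeomorphism that commutes with $G$ on the nose rather than up to isotopy --- which is precisely the hypothesis a Birman--Hilden theorem requires, and such theorems are not available off the shelf for arbitrary orbifold covers with boundary, punctures and order-$2$ cone points; (iii) even granting descent, you must check that the descended class induces the original cluster automorphism and match the tag $\mathbb{Z}_2$ factors at orbifold punctures with those at their preimages; (iv) the exceptional cases (orbifolds whose covers are excluded from Theorem~\ref{thm:mcgiso} or from Bridgeland--Smith's proposition) suggest that the conjecture as stated may itself require exclusions analogous to those in Theorem~\ref{thm:mcgiso}, which your proposal defers to ``direct computation'' without identifying the list. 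Until these are resolved the argument is an outline of where a proof would have to go, which is consistent with the paper's decision to state the result only as a conjecture.
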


\begin{figure}
	\centering%
	\includegraphics{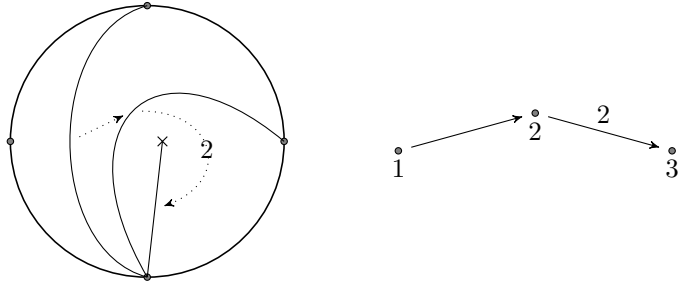}
	\caption{Triangulation of an orbifold (left) with associated diagram (right). The interior
	orbifold point is shown as a cross. The associated diagram is also shown with dotted
	arrows inside the triangulation.}
	\label{fig:mcg-b3}
\end{figure}

\begin{example}
	Consider the orbifold $\mathcal{O}$ constructed from the disc with four marked points
	on the boundary and a single orbifold point in the interior, as shown in
	Figure~\ref{fig:mcg-b3}.

	This orbifold has no punctures, so the tagged mapping class group is equal to
	the mapping class group. Any element of the mapping class group must fix the
	orbifold point and permute the four boundary marked points. The only such
	permutations are rotations around the boundary, as any reflection would not
	preserve the orientation, hence the mapping class group is isomorphic to
	$\mathbb{Z}_4$ generated by a rotation by angle $\frac{\pi}{2}$.

	This orbifold corresponds to the cluster algebra of Dynkin type $B_3$, which
	can be generated by the diagram in Figure~\ref{fig:mcg-b3}. The cluster
	automorphism group of $\mathcal{A}_{B_3}$ is the dihedral group with 8 elements:
	\[ \operatorname{Aut}\mathcal{A}_{B_3} \cong D_{4} = \mathbb{Z}_4 \rtimes \mathbb{Z}_2, \]%
	where $\mathbb{Z}_4$ is generated by the automorphism given by the action of $\mu_1
	\mu_2 \mu_3$ on the initial cluster and $\mathbb{Z}_2$ by $\mu_1 \mu_3$. This can be
	seen as the automorphisms of the marked exchange graph shown in
	Figure~\ref{fig:b3-marked} where the 4 squares are permuted while fixing the
	markings.
	
	The direct cluster automorphisms are those in the subgroup $\mathbb{Z}_4$ of the
	cluster automorphism group, and so
	\[ \operatorname{Aut}^{+}\mathcal{A}_{B_3} \cong \mathbb{Z}_4 \cong \operatorname{MCG}(\mathcal{O}) = \operatorname{MCG}_{\bowtie}(\mathcal{O}). \]%
\end{example}

\subsection*{Acknowledgements}
The author would like to thank Pavel Tumarkin for his help and
supervision, as well as Anna Felikson and Robert Marsh for highlighting the
links between this work and maximal green sequences.

\newcommand{\doi}[1]{\href{http://dx.doi.org/#1}{\texttt{doi:#1}}}
\newcommand{\arxiv}[1]{\href{http://arxiv.org/abs/#1}{\texttt{arXiv:#1}}}

\end{document}